\providecommand{\U}[1]{\protect \rule{.1in}{.1in}}
\newtheorem{theorem}{Theorem}[section]
\newtheorem{corollary}[theorem]{Corollary}
\newtheorem{definition}[theorem]{Definition}
\newtheorem{lemma}[theorem]{Lemma}
\newtheorem{proposition}[theorem]{Proposition}
\newtheorem{remark}[theorem]{Remark}
\newenvironment{proof}[1][Proof]{\noindent \textbf{#1.} }{\  \rule{0.5em}{0.5em}}
\numberwithin{equation}{section}
\begin{document}

\title{A robust $\alpha$-stable central limit theorem under sublinear expectation
without integrability condition}
\author{Lianzi Jiang\thanks{College of Mathematics and Systems Science, Shandong
University of Science and Technology, Qingdao, Shandong 266590, China.
jianglianzi95@163.com.}
\and Gechun Liang\thanks{Department of Statistics, The University of Warwick,
Coventry CV4 7AL, U.K. g.liang@warwick.ac.uk.}}
\date{}
\maketitle

\textbf{Abstract}. This article relaxes the integrability condition imposed in
the literature for the robust $\alpha$-stable central limit theorem under
sublinear expectation. Specifically, for $\alpha \in(0,1]$, we prove that the
normalized sums of i.i.d. non-integrable random variables $\big
\{n^{-\frac{1}{\alpha}}\sum_{i=1}^{n}Z_{i}\big \}_{n=1}^{\infty}$ converge in
distribution to $\tilde{\zeta}_{1}$, where $(\tilde{\zeta}_{t})_{t\in
\lbrack0,1]}$ is a multidimensional nonlinear symmetric $\alpha$-stable
process with a jump uncertainty set $\mathcal{L}$. The limiting $\alpha
$-stable process is further characterized by a fully nonlinear partial
integro-differential equation (PIDE)
\[
\left \{
\begin{array}
[c]{l}%
\displaystyle \partial_{t}u(t,x)-\sup \limits_{F_{\mu}\in \mathcal{L}}\left \{
\int_{\mathbb{R}^{d}}\delta_{\lambda}^{\alpha}u(t,x)F_{\mu}(d\lambda)\right \}
=0,\\
\displaystyle u(0,x)=\phi(x),\  \  \  \forall(t,x)\in \lbrack0,1]\times
\mathbb{R}^{d},
\end{array}
\right.
\]
where
\[
\delta_{\lambda}^{\alpha}u(t,x):=\left \{
\begin{array}
[c]{ll}%
u(t,x+\lambda)-u(t,x)-\langle D_{x}u(t,x),\lambda \mathbbm{1}_{\{|\lambda
|\leq1\}}\rangle, & \alpha=1,\\
u(t,x+\lambda)-u(t,x), & \alpha \in(0,1).
\end{array}
\right.
\]
The main tools are a weak convergence approach to obtain the limiting process, a L\'evy-Khintchine representation of the nonlinear $\alpha$-stable process and a truncation technique to estimate the corresponding $\alpha$-stable
L\'{e}vy measures. As a byproduct, the article also provides a probabilistic
approach to prove the existence of the above fully nonlinear PIDE. \newline

\textbf{Keywords}. Robust stable central limit theorem, Partial
integro-differential equation, $\alpha$-stable distribution, Sublinear
expectation\newline

\textbf{MSC-classification}. 60F05, 60G51, 60G52, 60G65, 45K05

\section{Introduction}

Non-Gaussian $\alpha$-stable distributions allow for large fluctuations and
are thus ideal for modeling high variability and heavy tail events. They are
the only non-Gaussian limiting distributions as attractors of normalized sums
of i.i.d. random variables $\big \{n^{-\frac{1}{\alpha}}\sum_{i=1}^{n}%
Z_{i}\big
\}_{n=1}^{\infty}$, known as $\alpha$-stable central limit theorems. Recently,
such type of central limit theorems have been generalized to a robust case
under model uncertainty (see \cite{BM2016}, \cite{HJL2021} and \cite{HJLP2022}%
) for $\alpha \in(1,2)$. The case $\alpha \in(0,1]$ is left open
because $Z_{i}$ is not integrable in such a situation. In this
paper, we study this more challenging case by establishing a new
robust $\alpha$-stable central limit theorem without integrability
condition.

The theory of robust probability and expectation has been developed by
\cite{P2004,P2007,P20081,P2010}, who introduced the notion of sublinear
expectation space, called $G$-expectation space. Peng used it to evaluate
random outcomes over a family of possibly mutually singular probability
measures instead of a single probability measure. A seminal result in this
theory is Peng's robust central limit theorem established in
\cite{P20082,P2019}. He showed that, under certain moment conditions,
the i.i.d. sequence $\{(X_{i},Y_{i})\}_{i=1}^{\infty}$ on a sublinear
expectation space $(\Omega,\mathcal{H},\mathbb{\hat{E}})$ converges in law to
a $G$-distributed random variable $(\xi,\eta)$, i.e.
\[
\lim_{n\rightarrow \infty}\mathbb{\hat{E}}\bigg[\phi \bigg(\frac{1}{\sqrt{n}%
}\sum_{i=1}^{n}{X_{i}},\frac{1}{n}\sum_{i=1}^{n}Y_{i}%
\bigg)\bigg]=\mathbb{\tilde{E}}[\phi(\xi,\eta)],
\]
for any test function $\phi$. The $G$-distributed random variable $(\xi,\eta)$
describes volatility and mean uncertainty of a probability model.
The corresponding convergence rate was established in \cite{Song2020} and
\cite{FPSS2019} using Stein's method, in \cite{Krylov2020} using stochastic
control method and in \cite{HL2020} using monotone approximation scheme method
under different model assumptions.
We refer the reader to \cite{C2016,Z2016,BK2022} and the references therein
for more research in this field.

A general $G$-L\'{e}vy process in the setting of sublinear expectation was
introduced by \cite{HP2009,HP2021} to further describe Poisson
jump uncertainty beyond volatility and mean uncertainty. They built a new type of
L\'{e}vy-Khintchine representation for $G$-L\'{e}vy processes by relating to a
class of fully nonlinear partial integro-differential equations (PIDEs).
Furthermore, the case of infinite activity jumps has been studied in
\cite{DKN2020,Kuhn2019,NN2017,NR2021}. An important class of nonlinear
L\'{e}vy processes with infinite activity jumps is the integrable $\alpha$-stable process
$(\zeta_{t})_{t\geq0}$ for $\alpha \in(1,2)$, which
corresponds to a fully nonlinear PIDE driven by a family of $\alpha$-stable
L\'{e}vy measures.
For the one-dimensional case,
the corresponding central limit theorem for the $\alpha$-stable process under
sublinear expectation was established in \cite{BM2016}, which shows that the
i.i.d. sequence $\{Z_{i}\}_{i=1}^{\infty}$ of \emph{real-valued} random
variables on a sublinear expectation space $(\Omega,\mathcal{H},\mathbb{\hat
{E})}$ converges in law to a nonlinear $\alpha$-stable distributed random
variable $\zeta$ under the integrability condition and an additional
consistency condition of $Z_{i}$, i.e.
\[
\lim_{n\rightarrow \infty}\mathbb{\hat{E}}\bigg[\phi \bigg(\frac{1}%
{\sqrt[\alpha]{n}}\sum_{i=1}^{n}Z_{i}\bigg)\bigg]=\mathbb{\tilde{E}}%
[\phi(\zeta)],\ \alpha\in(1,2),
\]
for any test function $\phi$. The corresponding convergence rate was
established in \cite{HJL2021} via a monotone approximation scheme.

Recently, together with Hu and Peng, the authors established a
universal robust limit theorem in \cite{HJLP2022} by further
developing the weak convergence approach introduced in
\cite{P2010_CLT}. This results covers Peng's robust central limit
theorem \cite{P20082,P2019} and Bayraktar-Munk's robust stable limit
theorem \cite{BM2016} as special cases. Under certain moment
conditions of $(X_{i},Y_{i},Z_{i})$ and a consistency condition of
$Z_{i}$, they proved that, for $\alpha \in(1,2)$, the normalized
sums of i.i.d. random variables $ \left \{  \left(
\frac{1}{\sqrt{n}}\sum_{i=1}^{n}X_{i},\frac{1}{n}\sum
_{i=1}^{n}Y_{i},\frac{1}{\sqrt[\alpha]{n}}\sum_{i=1}^{n}Z_{i}\right)
\right \}  _{n=1}^{\infty}%
$
converge in distribution to $\tilde{L}_{1}$, where $\tilde{L}_{t}=(\tilde{\xi
}_{t},\tilde{\eta}_{t},\tilde{\zeta}_{t})$, $t\in \lbrack0,1]$, is a
multidimensional nonlinear L\'{e}vy process with an uncertainty set $\Theta$
as a set of L\'{e}vy triplets. They further proved that the nonlinear L\'{e}vy
process can be characterized via a fully nonlinear and possibly degenerate
PIDE
\[
\left \{
\begin{array}
[c]{l}%
\displaystyle \partial_{t}u(t,x,y,z)-\sup \limits_{(F_{\mu},q,Q)\in \Theta
}\left \{  \int_{\mathbb{R}^{d}}\delta_{\lambda}u(t,x,y,z)F_{\mu}%
(d\lambda)\right. \\
\displaystyle \text{\  \  \  \  \  \  \  \  \  \  \  \  \  \  \  \ }\left.  +\langle
D_{y}u(t,x,y,z),q\rangle+\frac{1}{2}tr[D_{x}^{2}u(t,x,y,z)Q]\right \}  =0,\\
\displaystyle u(0,x,y,z)=\phi(x,y,z),\  \  \forall(t,x,y,z)\in \lbrack
0,1]\times \mathbb{R}^{3d},
\end{array}
\right.
\]
with $\delta_{\lambda}u(t,x,y,z):=u(t,x,y,z+\lambda)-u(t,x,y,z)-\langle
D_{z}u(t,x,y,z),\lambda \rangle$.

However, all the aforementioned works assume $\alpha \in(1,2)$,
leaving the more difficult regime $\alpha \in(0,1]$ open. There are
two main challenges. First, this case is outside the scope of the
integrability conditions of L\'{e}vy triplets in \cite{NN2017}, and
the PIDE characterization of the nonlinear $\alpha$-stable
distribution is unknown to date. This leads to the existing method
in \cite{BM2016} invalid, because \cite{BM2016} heavily relies on
the regularity estimates of fully nonlinear PIDEs. Second, due to
the lack of integrability for the normalized sums of i.i.d. random
variables, the sufficient condition in \cite{HJLP2022} for the
existence of their weak convergence limit fails.


We overcome the aforementioned difficulty by introducing a sublinear
expectation space with random variables not necessarily having any moments.
Hence, the random variables do not need to belong to the space but their
composition of bounded Lipschitz continuous functions do. This is sufficient
to define their distributions and study the corresponding robust central limit
theorems. We introduce a new $\delta$-moment condition on the normalized sums
of i.i.d. random variables (see the assumption (A.1) in Section 3) to
guarantee the tightness of the sequence so it has a weak convergence limit. With an additional consistency
condition (see the assumption (A.2) in Section 3), we show that the limiting
process is a nonlinear symmetric $\alpha$-stable process $(\tilde{\zeta}%
_{t})_{t\in \lbrack0,1]}$ such that
\[
\lim_{n\rightarrow \infty}\mathbb{\hat{E}}\bigg[\phi \bigg(\frac{1}%
{\sqrt[\alpha]{n}}\sum_{i=1}^{n}Z_{i}\bigg)\bigg]=\mathbb{\tilde{E}}%
[\phi(\tilde{\zeta}_{1})],\ \alpha\in(0,1],
\]
for any test function $\phi$. One remarkable feature of this process is that
$\tilde{\zeta}_{t}$ is non-integrable, i.e., $\mathbb{\tilde{E}}[|\tilde
{\zeta}_{1}|]=\infty$, so one cannot simply generalize the
L\'{e}vy-Khintichine representation formula in \cite{HJLP2022} to $\alpha
\in(0,1]$. We overcome this difficulty by using a truncation technique to
estimate the $\alpha$-stable L\'{e}vy measure, which gives a new type
L\'{e}vy-Khintichine representation for $(\tilde{\zeta}_{t})_{t\in \lbrack
0,1]}$ with $\alpha \in(0,1]$.

Together with the case $\alpha \in(1,2)$ studied in \cite{HJLP2022}, we are
able to give a complete characterization of the nonlinear and non-Gaussian
$\alpha$-stable process, for $\alpha \in(0,2)$, via a fully nonlinear PIDE
\[
\left \{
\begin{array}
[c]{l}%
\displaystyle \partial_{t}u(t,x)-\sup \limits_{F_{\mu}\in \mathcal{L}}\left \{
\int_{\mathbb{R}^{d}}\delta_{\lambda}^{\alpha}u(t,x)F_{\mu}(d\lambda)\right \}
=0,\\
\displaystyle u(0,x)=\phi(x),\text{ \  \ }\forall(t,x)\in \lbrack0,1]\times
\mathbb{R}^{d},
\end{array}
\right.
\]
where $\mathcal{L}$ is an $\alpha$-stable jump uncertainty set and
\[
\delta_{\lambda}^{\alpha}u(t,x):=\left \{
\begin{array}
[c]{ll}%
u(t,x+\lambda)-u(t,x)-\langle D_{x}u(t,x),\lambda \rangle, & \alpha \in(1,2),\\
u(t,x+\lambda)-u(t,x)-\langle D_{x}u(t,x),\lambda\mathbbm{1}_{\{|\lambda
|\leq1\}}\rangle, & \alpha=1,\\
u(t,x+\lambda)-u(t,x), & \alpha \in(0,1).
\end{array}
\right.
\]
Thanks to the connection with the above fully nonlinear PIDE, we also obtain
the existence of its viscosity solution as a byproduct, i.e. the solution can
be obtained via the weak convergence limit of the normalized sums of i.i.d.
random variables. When restricting the jump uncertainty
set to a singleton, our work also complements the classical $\alpha$-stable
central limit theorems in the linear setting. See, for example,
\cite{CX2019,DN2002,H19811,IL1971,JP1998,KK2001} and the references therein.

The article is organized as follows. In Section 2, we review some necessary
results about sublinear expectation for the $\alpha$-stable central limit
theorem without integrability condition. Section 3 details our main result,
discusses its connection with the classical linear case, and gives an example
highlighting the application of our main result. The proof of the main theorem
is given in Section 4 with some technical results provided in the Appendix.

\section{Preliminaries on nonlinear $\alpha$-stable process for $\alpha
\in(0,1]$}

The sublinear expectation framework has been introduced in
\cite{P2007,P20081,P2010_CLT,P2010}. However, the framework requires that
random variables defined on a sublinear expectation space are integrable,
which is too restrictive to study robust limit theorems for $\alpha$-stable
distribution with $\alpha \in(0,1]$.

To resolve the issue, we introduce a triplet $(\Omega,\mathcal{H}%
,\mathbb{\hat{E})}$, where $\Omega$ is a given set and $\mathcal{H}$ is a
linear space of real valued functions on $\Omega$ such that $\varphi
(X_{1},\ldots,X_{n})\in \mathcal{H}$ if $X_{1},\ldots,X_{n}\in \mathcal{H}$ for
each $\varphi \in{C_{b,Lip}}(\mathbb{R}^{n})$, the space of bounded and
Lipschitz continuous functions on $\mathbb{R}^{n}$. Then, $\mathbb{\hat{E}}$:
$\mathcal{H}\rightarrow \mathbb{R}$ is called a sublinear expectation if, for
all $X,Y\in \mathcal{H}$, it satisfies the following properties:

\begin{description}
\item[(i)] (Monotonicity) $\mathbb{\hat{E}}[X] \geq \mathbb{\hat{E}}[Y]$, if
$X\geq Y$;

\item[(ii)] (Constant preservation) $\mathbb{\hat{E}}[c] =c$, for
$c\in \mathbb{R}$;

\item[(iii)] (Sub-additivity) $\mathbb{\hat{E}}[X+Y] \leq \mathbb{\hat{E}} [X]
+\mathbb{\hat{E}}[Y] ;$

\item[(iv)] (Positive homogeneity) $\mathbb{\hat{E}}[\lambda X] =\lambda
\mathbb{\hat{E}}[X]$, for $\lambda>0$.
\end{description}

By a{n $n$-dimensional }random variable $X$ defined on $(\Omega,\mathcal{H}%
,\mathbb{\hat{E})}$, we mean $\varphi(X)\in \mathcal{H}$ for all
$\varphi \in C_{b,Lip}(\mathbb{R}^{n})$. However, the random
variable $X$ itself is not required to be in $\mathcal{H}$, i.e.
$\mathbb{\hat{E}}[X]$ may not exist. Our new framework relaxes the
integrability condition imposed on $X$ but it is sufficient to
define distribution of $X$.


\begin{definition}
Let {$X$ be a given $n$-dimensional random variable defined on a
sublinear expectation space $(\Omega,\mathcal{H},\mathbb{\hat{E})}$.
Define a functional on $C_{b,Lip}(\mathbb{R}^{n})$ by
\[
\mathbb{F}_{X}[\varphi]:=\mathbb{\hat{E}}[\varphi(X)],\text{ for }\varphi \in
C_{b,Lip}(\mathbb{R}^{n}).
\]
Then, $(\mathbb{R}^{n},C_{b,Lip}(\mathbb{R}^{n}),\mathbb{F}_{X})$ forms a
sublinear expectation space. $\mathbb{F}_{X}$ is called the distribution of
$X$ under $\mathbb{\hat{E}}$.}
\end{definition}

\begin{definition}
\label{converge in distribution}A sequence of $n$-dimensional random variables
$\{X_{i}\}_{i=1}^{\infty}$ defined on a sublinear expectation space
$(\Omega,\mathcal{H},\mathbb{\hat{E}})$ is said to converge in distribution
(or converge in law) under $\mathbb{\hat{E}}$ if for each $\varphi \in
C_{b,Lip}(\mathbb{R}^{n})$, the sequence $\{ \mathbb{F}_{X_{i}}[\varphi
]\}_{i=1}^{\infty}$ is a Cauchy sequence.

\end{definition}

Two random variables $X$ and $Y$, which may be defined on different sublinear
expectation spaces, are called \emph{identically distributed} if
$\mathbb{\hat{E}}_{1}[\varphi(X)] =\mathbb{\hat{E}}_{2}[\varphi(Y)]$, for all
$\varphi \in C_{b,Lip}(\mathbb{R}^{n})$. It is denoted by $X\overset{d}{=}Y$.
On the other hand, $Y$ is called \emph{independent} from $X$ if for every
$\varphi \in C_{b,Lip}(\mathbb{R}^{m}\times \mathbb{R}^{n})$ we have
\[
\mathbb{\hat{E}}\left[  \varphi(X,Y)\right]  =\mathbb{\hat{E}}\left[
\mathbb{\hat{E}}\left[  \varphi(x,Y)\right]  _{x=X}\right]  ,
\]
which is denoted by $Y\perp \! \! \! \perp X$. Notably, $Y$ is independent from
$X$ does not necessarily imply that $X$ is independent from $Y$. Then, $Y$ is
said to be an independent copy of $X$ if $Y\overset{d}{=}X$ and $Y\perp \! \!
\! \perp X$.


One can readily obtain the following two results concerning the convergence in
distribution of random variables.

\begin{proposition}
\label{independent copy converge in law}{Let $\{X_{i}\}_{i=1}^{\infty}$ be a
sequence of $n$-dimensional random variables defined on a sublinear
expectation spaces $(\Omega,\mathcal{H},\mathbb{\hat{E}})$. If $\{X_{i}%
\}_{i=1}^{\infty}$ converges in distribution to $X$ under $\mathbb{\hat{E}}$,
i.e.,
\[
\lim_{i\rightarrow \infty}\mathbb{\hat{E}}[\varphi(X_{i})]=\mathbb{\tilde{E}%
}[\varphi(X)],\text{ for }\varphi \in C_{b,Lip}(\mathbb{R}^{n}),
\]
which is denoted by $X_{i}\overset{\mathcal{D}}{\rightarrow}X$. Then,
\[
\bar{X}_{i}\overset{\mathcal{D}}{\rightarrow}\bar{X}\text{\ and \ }X_{i}%
+\bar{X}_{i}\overset{\mathcal{D}}{\rightarrow}X+\bar{X},
\]
where $\bar{X}_{i}$ and $\bar{X}$ are independent copy of $X_{i}$ and $X$,
respectively.}
\end{proposition}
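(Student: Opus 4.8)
The plan is to obtain both statements directly from the axioms of a sublinear expectation (monotonicity, sub-additivity, positive homogeneity, constant preservation) together with the definitions of \emph{identically distributed} and \emph{independent copy}, the only analytic input being a cut-off argument. The first assertion I would settle immediately: an independent copy is in particular identically distributed, so $\mathbb{\hat{E}}[\varphi(\bar X_i)]=\mathbb{\hat{E}}[\varphi(X_i)]$ and $\mathbb{\tilde{E}}[\varphi(\bar X)]=\mathbb{\tilde{E}}[\varphi(X)]$ for every $\varphi\in C_{b,Lip}(\mathbb{R}^n)$, whence $\mathbb{\hat{E}}[\varphi(\bar X_i)]=\mathbb{\hat{E}}[\varphi(X_i)]\to\mathbb{\tilde{E}}[\varphi(X)]=\mathbb{\tilde{E}}[\varphi(\bar X)]$, i.e. $\bar X_i\overset{\mathcal{D}}{\rightarrow}\bar X$.

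For the second assertion I would fix $\varphi\in C_{b,Lip}(\mathbb{R}^n)$ with $\sup|\varphi|\le M$ and Lipschitz constant $L$, and use independence of $\bar X_i$ from $X_i$ to write
\[
\mathbb{\hat{E}}[\varphi(X_i+\bar X_i)]=\mathbb{\hat{E}}[g_i(X_i)],\qquad g_i(x):=\mathbb{\hat{E}}[\varphi(x+\bar X_i)]=\mathbb{\hat{E}}[\varphi(x+X_i)]=\mathbb{F}_{X_i}[\varphi(x+\cdot)],
\]
the last identities holding since $\bar X_i\overset{d}{=}X_i$ and $\varphi(x+\cdot)\in C_{b,Lip}(\mathbb{R}^n)$. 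The routine verifications are then: (i) each $g_i$ belongs to $C_{b,Lip}(\mathbb{R}^n)$ with bound $M$ and Lipschitz constant $L$ \emph{independent of} $i$ (from constant preservation, monotonicity and sub-additivity), so $g_i(X_i)\in\mathcal H$ and the display makes sense; (ii) for fixed $x$, $g_i(x)\to g(x):=\mathbb{\tilde{E}}[\varphi(x+X)]$ by the hypothesis $X_i\overset{\mathcal{D}}{\rightarrow}X$ applied to $\varphi(x+\cdot)$, hence $g\in C_{b,Lip}(\mathbb{R}^n)$ and, by equicontinuity, $g_i\to g$ uniformly on compact sets; (iii) $\mathbb{\tilde{E}}[\varphi(X+\bar X)]=\mathbb{\tilde{E}}[g(X)]$ by the product construction of the independent copy $\bar X$.

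It then remains to show $\mathbb{\hat{E}}[g_i(X_i)]\to\mathbb{\tilde{E}}[g(X)]$. I would write this as $(\mathbb{\hat{E}}[g_i(X_i)]-\mathbb{\hat{E}}[g(X_i)])+(\mathbb{\hat{E}}[g(X_i)]-\mathbb{\tilde{E}}[g(X)])$; the second bracket tends to $0$ since $g\in C_{b,Lip}(\mathbb{R}^n)$ and $X_i\overset{\mathcal{D}}{\rightarrow}X$, and the first is bounded by $\mathbb{\hat{E}}[|g_i(X_i)-g(X_i)|]$. To control the latter I would pick $R>0$ and $\chi_R\in C_{b,Lip}(\mathbb{R}^n)$ with $\mathbbm{1}_{\{|x|\ge R+1\}}\le\chi_R\le\mathbbm{1}_{\{|x|\ge R\}}$; since $|g_i-g|\le\sup_{|y|\le R+1}|g_i(y)-g(y)|+2M\chi_R$ pointwise, monotonicity and sub-additivity give
\[
\mathbb{\hat{E}}[|g_i(X_i)-g(X_i)|]\le\sup_{|y|\le R+1}|g_i(y)-g(y)|+2M\,\mathbb{\hat{E}}[\chi_R(X_i)].
\]
Letting $i\to\infty$ kills the first term (local uniform convergence) and replaces $\mathbb{\hat{E}}[\chi_R(X_i)]$ by $\mathbb{\tilde{E}}[\chi_R(X)]$; letting $R\to\infty$ then finishes.

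The step I expect to be the only real obstacle is the final limit $\mathbb{\tilde{E}}[\chi_R(X)]\to0$ as $R\to\infty$, that is, a tightness (continuity-from-above) property of the limiting distribution $\mathbb{F}_X$. In the present framework this causes no trouble, since the limiting sublinear expectations are represented by tight families of probability measures and the pre-limit laws are uniformly tight (a consequence of the $\delta$-moment control that will be in force when the proposition is applied), so the passage is legitimate; but it is the one place where more than the bare axioms of $\mathbb{\hat{E}}$ is used, which is why I would isolate it explicitly rather than absorb it into the "routine" part.
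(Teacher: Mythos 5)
The paper never writes a proof of this proposition (it is introduced with ``one can readily obtain''), so there is nothing to compare against line by line; your argument is the natural one and, step by step, it is sound. The first claim is indeed immediate from $\bar X_i\overset{d}{=}X_i$ and $\bar X\overset{d}{=}X$. For the second, the reduction $\mathbb{\hat{E}}[\varphi(X_i+\bar X_i)]=\mathbb{\hat{E}}[g_i(X_i)]$ with $g_i(x)=\mathbb{F}_{X_i}[\varphi(x+\cdot)]$, the uniform bound and equi-Lipschitz property of the $g_i$ (hence local uniform convergence to $g$), the identity $\mathbb{\tilde{E}}[\varphi(X+\bar X)]=\mathbb{\tilde{E}}[g(X)]$ (which follows from the definition of independent copy alone, no product construction needed), the splitting of $\mathbb{\hat{E}}[g_i(X_i)]-\mathbb{\tilde{E}}[g(X)]$, and the pointwise bound $|g_i-g|\le\sup_{|y|\le R+1}|g_i-g|+2M\chi_R$ combined with monotonicity and sub-additivity are all correct.

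You are also right that the last limit, $\mathbb{\tilde{E}}[\chi_R(X)]\to0$ as $R\to\infty$, is the one genuinely non-axiomatic ingredient, and it cannot be dispensed with: the proposition as stated can fail without it. For instance, take $\Omega=[1,\infty)\times\{\pm1\}$, $\mathcal{H}$ all bounded functions, $\mathbb{\hat{E}}[f]=\sup_{a\ge1}\tfrac12\big(f(a,1)+f(a,-1)\big)$ and $X_i(a,s)=s(a+i)$; then $\mathbb{\hat{E}}[\psi(X_i)]=\sup_{c\ge1+i}\tfrac12(\psi(c)+\psi(-c))$ decreases to $\mathbb{\tilde{E}}[\psi(X)]:=\limsup_{c\to\infty}\tfrac12(\psi(c)+\psi(-c))$ for every $\psi\in C_{b,Lip}(\mathbb{R})$, yet for a Lipschitz bump $\varphi$ supported in $[-1,1]$ with $\varphi(0)=1$ one computes (for any independent copies, since the quantities depend only on the laws) $\mathbb{\hat{E}}[\varphi(X_i+\bar X_i)]=\tfrac12$ for all $i$ while $\mathbb{\tilde{E}}[\varphi(X+\bar X)]=0$. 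So some tightness hypothesis is needed. However, your justification of that step is the weak point: in this minimal framework no representation of $\mathbb{\tilde{E}}$ by (tight families of) probability measures is assumed or available, so ``the limiting sublinear expectations are represented by tight families'' is not something you can invoke. The clean fix is to use uniform tightness of the prelimit laws $\{\mathbb{F}_{X_i}\}$, which is exactly what is in force wherever the paper applies the proposition (assumption (A1) via Lemma \ref{tight}, and Corollary \ref{corollary tight theorem}): choosing $\chi_R\le |x|^{\delta}/R^{\delta}$-type cutoffs one gets $2M\,\mathbb{\hat{E}}[\chi_R(X_i)]\le 2M\sup_{j}\mathbb{F}_{X_j}[\chi_R]\rightarrow0$ as $R\to\infty$, uniformly in $i$, and then no property of $\mathbb{\tilde{E}}$ beyond the assumed convergence is used. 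In short: your proof is the right one, but the tightness you isolated should be stated as a hypothesis (or taken from Lemma \ref{tight}) rather than attributed to a representation of the limit.
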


\begin{remark}
\label{ramark i.i.d.}The above results can be generalized to multiple
summations. For each $i$ and $m\in \mathbb{N}$, let $\{X_{i}^{n}\}_{n=1}^{m}$
be an independent copy sequence of $X_{i}$ in the sense that $X_{i}%
^{1}\overset{d}{=}X_{i}$, $X_{i}^{n+1}\overset{d}{=}X_{i}^{n}$ and
$X_{i}^{n+1}\perp \! \! \! \perp(X_{i}^{1},X_{i}^{2},\ldots,X_{i}^{n})$ for
$n=1,\ldots,m-1$, and let $\{X^{n}\}_{n=1}^{m}$ be an independent copy
sequence of $X$ in the sense that $X^{1}\overset{d}{=}X$, $X^{n+1}\overset
{d}{=}X^{n}$ and $X^{n+1}\perp \! \! \! \perp(X^{1},X^{2},\ldots,X^{n})$ for
$n=1,\ldots,m-1$. If $X_{i}\overset{\mathcal{D}}{\rightarrow}X$, then
\[
\sum_{n=1}^{m}X_{i}^{n}\overset{\mathcal{D}}{\rightarrow}\sum_{n=1}^{m}X^{n}.
\]

\end{remark}

We are interested in non-integrable $\alpha$-stable distributed random
variables, and they serve as the attractors of normalized sums of i.i.d.
random variables.

\begin{definition}
{Let $\alpha \in(0,1]$. An $n$-dimensional random variable $X$ is said to be
(strictly) $\alpha$-stable under a sublinear expectation space $(\Omega
,\mathcal{H},\mathbb{\hat{E})}$ if}
\[
aX+bY\overset{d}{=}(a^{\alpha}+b^{\alpha})^{1/\alpha}X,\text{\ for }a,b\geq0,
\]
where $Y$ is an independent copy of $X$.
\end{definition}

\begin{remark}
A special attention is deserved for the case $\alpha=1$. If $|X|\in
{\mathcal{H}}$, then $\mathbb{\hat{E}}[|X|]$ exists. In this situation, $X$ is
the maximum distribution as introduced in \cite{P20082,P2010}. However, if
$|X|\notin \mathcal{H}$ but $\varphi(X)\in \mathcal{H}$ as considered in this
paper, then $\mathbb{\hat{E}}[|X|]$ does not exist but $\mathbb{\hat{E}%
}[\varphi(X)]$ exists. In this case $X$ is fundamentally different from the
maximum distribution.

\end{remark}

We close this section by recalling nonlinear L\'{e}vy processes under
sublinear expectations introduced in \cite{HP2021} and \cite{NN2017} and, in
particular, nonlinear symmetric $\alpha$-stable processes.

\begin{definition}
An $n$-dimensional c\`{a}dl\`{a}g process $(X_{t})_{t\geq0}$ defined on a
sublinear expectation space $(\Omega,\mathcal{H},\mathbb{\hat{E})}$ is called
a nonlinear L\'evy process if the following properties hold.

\begin{description}
\item[(i)] $X_{0}=0$;

\item[(ii)] $(X_{t})_{t\geq0}$ has stationary increments, that is,
$X_{t}-X_{s}$ and $X_{t-s}$ are identically distributed for all $0\leq s\leq
t;$

\item[(iii)] $(X_{t})_{t\geq0}$ has independent increments, that is,
$X_{t}-X_{s}$ is independent from $(X_{t_{1}},\ldots,X_{t_{n}})$ for each
$n\in \mathbb{N}$ and $0\leq t_{1}\leq \cdots \leq t_{n}\leq s\leq t$.
\end{description}

Furthermore, $X$ is called a nonlinear symmetric $\alpha$-stable process if
${X}_{t}\overset{d}{=}t^{1/\alpha}X_{1}$ and $X_{t}\overset{d}{=}-X_{t}$.
\end{definition}

\section{Main results}


Let $\alpha \in(0,1]$, $(\underline{\Lambda},\overline{\Lambda})$ for some
$\underline{\Lambda},\overline{\Lambda}>0$, and $F_{\mu}$ be the $\alpha
$-stable L\'{e}vy measure on $(\mathbb{R}^{d},\mathcal{B}(\mathbb{R}^{d}))$,
\begin{equation}
F_{\mu}(B)=\int_{S}\mu(dz)\int_{0}^{\infty}\mathbbm{1}_{B}(rz)\frac
{dr}{r^{1+\alpha}},\text{ \ for }B\in \mathcal{B}(\mathbb{R}^{d}), \label{F_mu}%
\end{equation}
where $\mu$ is a symmetric finite measure (cf. Sato \cite{Sato1999}) on the
unit sphere $S=\{z\in \mathbb{R}^{d}:|z|=1\}$. Introduce a \emph{jump
uncertainty set}:
\begin{equation}
\mathcal{L}=\left \{  F_{\mu}\  \text{measure on }\mathbb{R}^{d}:\mu
(S)\in(\underline{\Lambda},\overline{\Lambda})\right \}  . \label{L_0}%
\end{equation}
Let $\{Z_{i}\}_{i=1}^{\infty}$ be an i.i.d. non-integrable sequence of
$\mathbb{R}^{d}$-valued random variables defined on a sublinear expectation
space $(\Omega,\mathcal{H},\mathbb{\hat{E}})$ in the sense that $\mathbb{\hat
{E}}[|Z_{i}|]=\infty$, $Z_{i+1}$ $\overset{d}{=}Z_{i}$ and $Z_{i+1}$ is
independent from $(Z_{1},\ldots,Z_{i})$ for each $i\in \mathbb{N}$.

We impose the following assumptions throughout the paper.

\begin{description}
\item[(A1)] ($\delta$-Moment condition)
For $S_{n}:=\sum \limits_{i=1}^{n}Z_{i}$, $M_{\delta}:=\sup \limits_{n}%
\mathbb{\hat{E}}[|n^{-\frac{1}{\alpha}}S_{n}|^{\delta}]<\infty$, for some
$0<\delta<\alpha$.

\item[(A2)] (Consistency condition) For each $\varphi \in C_{b}^{3}%
(\mathbb{R}^{d})$, the space of functions on $\mathbb{R}^{d}$ with
uniformly bounded derivatives up to the order $3$, satisfies
\[
\frac{1}{s}\bigg \vert \mathbb{\hat{E}}\big[\varphi(x+s^{\frac{1}{\alpha}%
}Z_{1})-\varphi(x)\big]-s\sup \limits_{F_{\mu}\in \mathcal{L}}\int
_{\mathbb{R}^{d}}\delta_{\lambda}^{\alpha}\varphi(x)F_{\mu}(d\lambda
)\bigg \vert \leq l(s)\rightarrow0
\]
uniformly on $x\in \mathbb{R}^{d}$ as $s\rightarrow0$, where $l$ is a function
on $[0,1]$ and%
\[
\delta_{\lambda}^{\alpha}\varphi(x):=\left \{
\begin{array}
[c]{ll}%
\varphi(x+\lambda)-\varphi(x)-\langle D\varphi(x),\lambda
\mathbbm{1}_{\{|\lambda|\leq1\}}\rangle, & \alpha=1,\\
\varphi(x+\lambda)-\varphi(x), & \alpha \in(0,1).
\end{array}
\right.
\]


\end{description}

\subsection{Discussion on the $\delta$-moment and consistency conditions
(A1)-(A2)\label{Remark}}

The $\delta$-moment condition (A1) is inspired by the existing moment
conditions imposed in the literature. Indeed, for the robust central limit
theorem, the second moment condition $\mathbb{\hat{E}}[|Z_{i}|^{2}]<\infty$ is
equivalent to $M_{2}<\infty$ with $\alpha=2$, and
for the robust $\alpha$-stable central limit theorem in
\cite{HJLP2022} with $\alpha \in(1,2)$, the moment condition therein
is $M_{1}<\infty$. For $\alpha \in(0,1]$, since in this case the
$\alpha$-stable distributions are non-integrable, the
$\delta$-moment condition $M_{\delta}<\infty$ for some
$0<\delta<\alpha$ is natural. While the assumption (A1) guarantees
the weak convergence of the i.i.d. sequence
$\{Z_{i}\}_{i=1}^{\infty}$, the assumption (A2) is essentially a
consistency condition for the distribution of $Z_{i}$. The
consistency condition has been widely used in the numerical analysis
of the monotone approximation schemes for nonlinear PDEs
\cite{BJ2002,BJ2005,BJ2007}.

In the following, we will show that when our attention is confined
to the classical linear case, the $\delta$-moment condition and
consistency condition turn out to be mild and are more general than
the usual assumptions imposed in the literature. For simplicity, we
consider the one-dimensional case. Let $\zeta \ $be a classical
symmetric $\alpha$-stable random variable with L\'{e}vy triplet
$(F_{\mu},0,0)$. In this case, the symmetric finite measure $\mu$ in
$F_{\mu}$ is supported on $\{1,-1\}$ with $k:=\mu \{1\}=\mu \{-1\}$.
Theorem 2.6.7 from Ibragimov and Linnik \cite{IL1971} indicates that
\[
\frac{1}{\sqrt[\alpha]{n}}\sum_{i=1}^{n}Z_{i}\overset{\mathcal{D}}%
{\rightarrow}\zeta \text{, \ as}\ n\rightarrow \infty
\]
if $Z_{1}$ has the cumulative distribution function
\begin{equation}
F_{Z_{1}}(x)=\left \{
\begin{array}
[c]{ll}%
\displaystyle \left[  k/\alpha+\beta_{1}(x)\right]  \frac{1}{|x|^{\alpha}}, &
x<0,\\
\displaystyle1-\left[  k/\alpha+\beta_{2}(x)\right]  \frac{1}{x^{\alpha}}, &
x>0,
\end{array}
\right.  \label{cdf}%
\end{equation}
where $\beta_{1}:$ $(-\infty,0]$ $\rightarrow \mathbb{R}$ and $\beta
_{2}:[0,\infty)\rightarrow \mathbb{R}$ are functions satisfying
\[
\lim_{x\rightarrow-\infty}\beta_{1}(x)=\lim_{x\rightarrow \infty}\beta
_{2}(x)=0.
\]
{Under the condition (\ref{cdf}), it follows that } {{ }}%
\[
E[|Z_{1}|]\geq \int_{1}^{\infty}P(|Z_{1}|>z)dz=\frac{2k}{\alpha}\int
_{1}^{\infty}z^{-\alpha}dz+\int_{1}^{\infty}\frac{\beta_{1}(-z)+\beta_{2}%
(z)}{z^{\alpha}}dz=\infty.
\]
\  \  \

To verify the $\delta$-moment condition (A1), for given $n>0$ and
$0<\delta<\alpha$, we define an approximation scheme
$u_{n}:[0,1]\times \mathbb{R}\rightarrow \mathbb{R}$ recursively by
\[%
\begin{array}
[c]{l}%
u_{n}(t,z)=|z|^{\delta},\text{ \ if\ }t\in \lbrack0,1/n),\\
u_{n}(t,z)=E[u_{n}(t-1/n,z+n^{-1/\alpha}Z_{1})]\text{, \ if\ }t\in
\lbrack1/n,1].
\end{array}
\]
 We further assume that
\begin{equation}
\beta_{i},i=1,2,\text{are continuously differentiable functions
defined on
}(-\infty,0]\text{ and\ }[0,\infty)\text{, respectively.} \label{beta assum1}%
\end{equation}
Then, $u_{n}(1,0)=n^{-\frac{\delta}{\alpha}}E[|S_{n}|^{\delta}]$ and
$u_{n}(0,0)=0$. Using the regularity estimates in Appendix
\ref{Appendix 2} with $\mathcal{L}$ being a singleton, it can be
readily verified that (A1) holds.

Next, we will show that (A2) also holds. For this, we further assume that there exist
some constants $C>0$ and $\gamma \geq0$ such that
\begin{equation}
|\beta_{i}(x)|\leq \frac{C}{|x|^{\gamma}}\text{, \ }x\neq0.
\label{beta assum2.1}%
\end{equation}
The above condition also appears in \cite{CX2019} under the
classical linear case. In the case of $\alpha=1$, we need to further
impose that
\begin{equation}
\beta_{1}(x)=\beta_{2}(-x)\text{, \ for }x<0. \label{beta assum2.2}%
\end{equation}

\begin{proposition}
Under the classical linear expectation, suppose that the conditions
(\ref{cdf})-(\ref{beta assum2.1}) hold. Then, (A2) holds for $\alpha \in(0,1)$.
If we further assume that (\ref{beta assum2.2}) holds, then (A2) holds for
$\alpha=1$.
\end{proposition}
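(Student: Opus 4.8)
My plan is to turn (A2) into a direct estimate, exploiting that in the linear case the supremum over $\mathcal L$ is trivial: since $\mu$ is supported on $\{1,-1\}$ with $k=\mu\{1\}=\mu\{-1\}$, the only relevant measure is $F_\mu$, with $F_\mu(d\lambda)=k|\lambda|^{-1-\alpha}\,d\lambda$ on each half-line, and $\delta^\alpha_\lambda\varphi(x)=\varphi(x+\lambda)-\varphi(x)$ for $\alpha\in(0,1)$. Under (\ref{beta assum1}) the variable $Z_1$ has a density $f_{Z_1}$, and differentiating (\ref{cdf}) gives $f_{Z_1}(z)=|z|^{-1-\alpha}(k+\alpha\beta_i(z)-z\beta_i'(z))$ ($i=2$ for $z>0$, $i=1$ for $z<0$). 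Rescaling by $\lambda=s^{1/\alpha}z$ then writes $E[\varphi(x+s^{1/\alpha}Z_1)]-\varphi(x)$ as $s\int_{\mathbb R}(\varphi(x+\lambda)-\varphi(x))|\lambda|^{-1-\alpha}(k+\alpha\beta_i(s^{-1/\alpha}\lambda)-s^{-1/\alpha}\lambda\,\beta_i'(s^{-1/\alpha}\lambda))\,d\lambda$, and the term carrying the factor $k$ is exactly $s\int_{\mathbb R}\delta^\alpha_\lambda\varphi(x)F_\mu(d\lambda)$ (for $\alpha=1$ after the compensation discussed below); so the task is to bound the $\beta_i$-remainder by some $l(s)\to0$, uniformly in $x$.

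Next I would split the remainder at $|\lambda|=\varepsilon$. On the small-jump region $|\lambda|\le\varepsilon$ I use $\varphi\in C_b^3$, i.e. $|\varphi(x+\lambda)-\varphi(x)|\le\|\varphi'\|_\infty|\lambda|$; a tail computation from (\ref{cdf}) gives $E[|Z_1|\mathbbm{1}_{\{|Z_1|\le R\}}]\le CR^{1-\alpha}$, so the rescaled contribution is $O(\varepsilon^{1-\alpha})$, as is the matching piece of $s\int\delta^\alpha_\lambda\varphi(x)F_\mu(d\lambda)$ — both small for small $\varepsilon$. On the large-jump region $|\lambda|>\varepsilon$ I compare tail functions: writing $\nu_s$ for the law of $s^{1/\alpha}Z_1$, one has for small $s$ that $\tfrac1s\nu_s((\lambda,\infty))-F_\mu((\lambda,\infty))=\beta_2(s^{-1/\alpha}\lambda)\lambda^{-\alpha}$, which tends to $0$ uniformly on $\{\lambda\ge\varepsilon\}$ because $\beta_2\to0$, with rate furnished by (\ref{beta assum2.1}). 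The derivatives $\beta_i'$ inside the density are handled by integration by parts, using $s^{-1/\alpha}\beta_i'(s^{-1/\alpha}\lambda)=\tfrac{d}{d\lambda}\beta_i(s^{-1/\alpha}\lambda)$ to move the derivative onto $\varphi$ and onto powers of $\lambda$; the boundary terms at $|\lambda|=\varepsilon$ carry a factor $\beta_i(s^{-1/\alpha}\varepsilon)\to0$ and those at infinity vanish. All constants depend only on $\|\varphi\|_{C_b^3}$ and on $k,C,\gamma$, so the bounds are uniform in $x$; sending $s\to0$ and then $\varepsilon\to0$ yields the modulus $l(s)$.

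For $\alpha=1$ the only new ingredient is the compensator: decompose $E[\varphi(x+sZ_1)]-\varphi(x)=E[\delta^1_{sZ_1}\varphi(x)]+\langle \varphi'(x),\,sE[Z_1\mathbbm{1}_{\{|sZ_1|\le1\}}]\rangle$. Assumption (\ref{beta assum2.2}) forces $P(Z_1>x)=P(Z_1<-x)$, i.e. $Z_1$ is symmetric, so the truncated expectation $E[Z_1\mathbbm{1}_{\{|Z_1|\le1/s\}}]$ (finite, since the tails are $\sim|z|^{-1}$) vanishes, and one repeats the argument with $\delta^1_{sZ_1}\varphi(x)$, which is $O(|sZ_1|^2)$ on the small-jump region by the bound on $\varphi''$. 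I expect the large-jump estimate to be the crux: because $Z_1$ is non-integrable the weight $\int^\infty\lambda^{-\alpha}\,d\lambda$ diverges for $\alpha\in(0,1]$, so one cannot simply dominate the error by the $L^1$-distance between $\tfrac1s\nu_s$ and $F_\mu$; the integration-by-parts step must genuinely exploit the uniform bounds on $\varphi,\varphi',\varphi''$, the decay in (\ref{beta assum2.1}), the property $\beta_i\to0$, and the one-sided bound $z\beta_i'(z)\le k+\alpha\beta_i(z)$ forced by $f_{Z_1}\ge0$, which is what keeps the $\beta_i'$-terms under control.
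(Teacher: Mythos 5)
Your setup is the same as the paper's: differentiate (\ref{cdf}) to get the density $|z|^{-1-\alpha}\bigl(k+\alpha\beta_i(z)-z\beta_i'(z)\bigr)$, rescale $\lambda=s^{1/\alpha}z$ so that the $k$-part reproduces $s\int_{\mathbb{R}}\delta_\lambda^\alpha\varphi(x)F_\mu(d\lambda)$, and integrate by parts to trade $\beta_i'$ for $\beta_i$; your small-jump treatment and the symmetry argument killing the compensator for $\alpha=1$ via (\ref{beta assum2.2}) are also fine. The genuine gap is exactly at the step you flag as "the crux" and then leave unresolved. After integration by parts on $\{\lambda>\varepsilon\}$ (boundary terms indeed vanish), the error is $\|D\varphi\|_\infty\int_\varepsilon^\infty|\beta_2(s^{-1/\alpha}\lambda)|\lambda^{-\alpha}\,d\lambda$ up to harmless terms, and since $\lambda^{-\alpha}$ has infinite mass at infinity for $\alpha\in(0,1]$, neither the uniform smallness of $\beta_2(s^{-1/\alpha}\cdot)$ on $\{\lambda\ge\varepsilon\}$ nor the rate in (\ref{beta assum2.1}) closes the argument unless $\gamma>1-\alpha$: for $\gamma\in[0,1-\alpha]$ (which the hypothesis allows; $\gamma=0$ is mere boundedness) the bound $Cs^{\gamma/\alpha}\int_\varepsilon^\infty\lambda^{-\alpha-\gamma}\,d\lambda$ is infinite and your argument stops. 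This is precisely the case the paper treats separately: it introduces a second, $s$-dependent cutoff $N_0$, evaluates the far-tail contribution $\int_{N_0}^\infty$ as $|\beta_2(s^{-1/\alpha}N_0)|N_0^{-\alpha}\le CN_0^{-\alpha}$ (here your observation that $z\beta_2'(z)\le k+\alpha\beta_2(z)$, from $f_{Z_1}\ge0$, is indeed the kind of sign information that justifies such a total-variation bound), controls the middle region after integration by parts by $\tfrac{1}{1-\alpha}\|D\varphi\|_\infty\sup_{|y|\ge s^{-1/\alpha}}|\beta_2(y)|\,N_0^{1-\alpha}$, and then optimizes by choosing $N_0=\bigl(\sup_{|y|\ge s^{-1/\alpha}}|\beta_2(y)|\bigr)^{-1}$, yielding a modulus of order $\sup_{|y|\ge s^{-1/\alpha}}|\beta_2(y)|^{\alpha}\to0$.

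Without this cutoff-and-optimize device (or some substitute), your proposal proves (A2) only for $\gamma>1-\alpha$ when $\alpha\in(0,1)$, so the proposition as stated is not established; saying that the integration by parts "must genuinely exploit" the listed ingredients is a statement of the difficulty, not its resolution. (For $\alpha=1$ your large-jump bound has the same fragility when $\gamma=0$, though there the paper's own display is equally terse; the essential missing idea relative to the paper is the $\gamma\in[0,1-\alpha]$ case for $\alpha\in(0,1)$.)
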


\begin{proof}
\emph{(i) }$\alpha \in(0,1)$. For $\varphi \in C_{b}^{3}(\mathbb{R})$ and
$s\in \lbrack0,1]$, from (\ref{cdf}), by changing variables, we can derive
that
\begin{align*}
&  \frac{1}{s}\bigg \vert E\big[\varphi(x+s^{\frac{1}{\alpha}}Z_{1}%
)-\varphi(x)\big]-s\int_{\mathbb{R}}\delta_{\lambda}^{\alpha}\varphi(x)F_{\mu
}(d\lambda)\bigg \vert \\
&  =\bigg \vert \int_{\mathbb{-\infty}}^{0}\delta_{\lambda}^{\alpha}%
\varphi(x)\big[\alpha \beta_{1}(s^{-\frac{1}{\alpha}}\lambda)-\beta_{1}%
^{\prime}(s^{-\frac{1}{\alpha}}\lambda)s^{-\frac{1}{\alpha}}\lambda
\big]|\lambda|^{-\alpha-1}d\lambda \  \\
&  \text{ \  \ }+\int_{0}^{\infty}\delta_{\lambda}^{\alpha}\varphi
(x)\big[\alpha \beta_{2}(s^{-\frac{1}{\alpha}}\lambda)-\beta_{2}^{\prime
}(s^{-\frac{1}{\alpha}}\lambda)s^{-\frac{1}{\alpha}}\lambda \big]\lambda
^{-\alpha-1}d\lambda \bigg \vert
\end{align*}
for all $x\in \mathbb{R}^{d}$. We only consider the integral above along the
positive half-line, and similarly for the integral along the negative
half-line. Set
\begin{align*}
&  \bigg \vert \int_{0}^{\infty}\delta_{\lambda}^{\alpha}\varphi
(x)[\alpha \beta_{2}(s^{-\frac{1}{\alpha}}\lambda)-\beta_{2}^{\prime}%
(s^{-\frac{1}{\alpha}}\lambda)s^{-\frac{1}{\alpha}}\lambda]\lambda^{-\alpha
-1}d\lambda \bigg \vert \\
&  =\bigg \vert \int_{0}^{1}\delta_{\lambda}^{\alpha}\varphi(x)\big[\alpha
\beta_{2}(s^{-\frac{1}{\alpha}}\lambda)-\beta_{2}^{\prime}(s^{-\frac{1}%
{\alpha}}\lambda)s^{-\frac{1}{\alpha}}\lambda \big]\lambda^{-\alpha-1}%
d\lambda \bigg \vert \\
&  \text{ \  \ }+\bigg \vert \int_{1}^{\infty}\delta_{\lambda}^{\alpha}%
\varphi(x)\big[\alpha \beta_{2}(s^{-\frac{1}{\alpha}}\lambda)-\beta_{2}%
^{\prime}(s^{-\frac{1}{\alpha}}\lambda)s^{-\frac{1}{\alpha}}\lambda
\big]\lambda^{-\alpha-1}d\lambda \bigg \vert:=\mathcal{I}+\mathcal{II}\text{.}%
\end{align*}
For the part $\mathcal{I}$, using integration by parts and the dominated
convergence theorem, one gets
\begin{align*}
\mathcal{I}  &  =\bigg \vert-\delta_{1}^{\alpha}\varphi(x)\beta_{2}%
(s^{-\frac{1}{\alpha}})+\int_{0}^{1}D\varphi(x+\lambda)\beta_{2}(s^{-\frac
{1}{\alpha}}\lambda)\lambda^{-\alpha}d\lambda \bigg \vert \\
&  \leq2\left \Vert \varphi \right \Vert _{\infty}|\beta_{2}(s^{-\frac{1}{\alpha
}})|+\left \Vert D\varphi \right \Vert _{\infty}\int_{0}^{1}|\beta_{2}%
(s^{-\frac{1}{\alpha}}\lambda)|\lambda^{-\alpha}d\lambda \rightarrow0\text{,
\ as }s\rightarrow0\text{.}%
\end{align*}
For the part $\mathcal{II}$, when $\gamma \in(1-\alpha,\infty)$, it follows
from integration by parts and (\ref{beta assum2.1}) that%
\begin{align*}
\mathcal{II}  &  =\bigg \vert \delta_{1}^{\alpha}\varphi(x)\beta_{2}%
(s^{-\frac{1}{\alpha}})+\int_{1}^{\infty}D\varphi(x+\lambda)\beta
_{2}(s^{-\frac{1}{\alpha}}\lambda)\lambda^{-\alpha}d\lambda \bigg \vert \\
&  \leq2\left \Vert \varphi \right \Vert _{\infty}|\beta_{2}(s^{-\frac{1}{\alpha
}})|+\left \Vert D\varphi \right \Vert _{\infty}\int_{1}^{\infty}|\beta
_{2}(s^{-\frac{1}{\alpha}}\lambda)|\lambda^{-\alpha}d\lambda \rightarrow
0\text{, \ as }s\rightarrow0\text{.}%
\end{align*}
When $\gamma \in \lbrack0,1-\alpha]$, we choose some $N_{0}>1$ such that
$|\beta_{2}(x)|\leq C$, for $|x|\geq N_{0}$. Then, it follows that
\begin{align*}
&  \bigg \vert \int_{N_{0}}^{\infty}\delta_{\lambda}^{\alpha}\varphi
(x)\big[\alpha \beta_{2}(s^{-\frac{1}{\alpha}}\lambda)-\beta_{2}^{\prime
}(s^{-\frac{1}{\alpha}}\lambda)s^{-\frac{1}{\alpha}}\lambda \big]\lambda
^{-\alpha-1}d\lambda \bigg \vert \\
&  \leq2\left \Vert \varphi \right \Vert _{\infty}\int_{N_{0}}^{\infty
}\big \vert \alpha \beta_{2}(s^{-\frac{1}{\alpha}}\lambda)-\beta_{2}^{\prime
}(s^{-\frac{1}{\alpha}}\lambda)s^{-\frac{1}{\alpha}}\lambda \big \vert \lambda
^{-\alpha-1}d\lambda \\
&  =2\left \Vert \varphi \right \Vert _{\infty}|\beta_{2}(s^{-\frac{1}{\alpha}%
}N_{0})|N_{0}^{-\alpha}\leq2C\left \Vert \varphi \right \Vert _{\infty}%
N_{0}^{-\alpha},
\end{align*}
and
\begin{align*}
&  \bigg \vert \int_{1}^{N_{0}}\delta_{\lambda}^{\alpha}\varphi(x)\big[\alpha
\beta_{2}(s^{-\frac{1}{\alpha}}\lambda)-\beta_{2}^{\prime}(s^{-\frac{1}%
{\alpha}}\lambda)s^{-\frac{1}{\alpha}}\lambda \big]\lambda^{-\alpha-1}%
d\lambda \bigg \vert \\
&  =\bigg \vert \delta_{1}^{\alpha}\varphi(x)\beta_{2}(s^{-\frac{1}{\alpha}%
})-\delta_{N_{0}}^{\alpha}\varphi(x)\beta_{2}(s^{-\frac{1}{\alpha}}N_{0}%
)N_{0}^{-\alpha}+\int_{1}^{N_{0}}D\varphi(x+\lambda)\beta_{2}(s^{-\frac
{1}{\alpha}}\lambda)\lambda^{-\alpha}d\lambda \bigg \vert \\
&  \leq2\left \Vert \varphi \right \Vert _{\infty}|\beta_{2}(s^{-\frac{1}{\alpha
}})|+2C\left \Vert \varphi \right \Vert _{\infty}N_{0}^{-\alpha}+\frac
{1}{1-\alpha}\left \Vert D\varphi \right \Vert _{\infty}\sup_{|\lambda|\geq
s^{-1/\alpha}}|\beta_{2}(\lambda)|N_{0}^{1-\alpha}.
\end{align*}
By letting $N_{0}=(\sup \limits_{|\lambda|\geq s^{-1/\alpha}}|\beta_{2}%
(\lambda)|)^{-1}$, we obtain that%
\[
\mathcal{II}\leq2\left \Vert \varphi \right \Vert _{\infty}|\beta_{2}%
(s^{-\frac{1}{\alpha}})|+\left[  4C\left \Vert \varphi \right \Vert _{\infty
}+\frac{1}{1-\alpha}\left \Vert D\varphi \right \Vert _{\infty}\right]
\sup \limits_{|\lambda|\geq s^{-1/\alpha}}|\beta_{2}(\lambda)|^{\alpha
}\rightarrow0\text{, \ as }s\rightarrow0\text{.}%
\]
\emph{(ii) }$\alpha=1$. We further impose that $\beta_{1}(x)=\beta_{2}(-x)$
for $x<0$. Then, for any $a>0$, we have $E[Z_{1}\mathbbm{1}_{\{|Z_{1}|\leq
a\}}]=0$, which implies that for $\varphi \in C_{b}^{3}(\mathbb{R})$ and
$s\in \lbrack0,1]$,
\begin{align*}
&  \frac{1}{s}\bigg \vert E\big[\varphi(x+sZ_{1})-\varphi(x)\big]-s\int
_{\mathbb{R}}\delta_{\lambda}^{1}\varphi(x)F_{\mu}(d\lambda)\bigg \vert \\
&  =\frac{1}{s}\bigg \vert E\big[\varphi(x+sZ_{1})-\varphi(x)-D\varphi
(x)sZ_{1}\mathbbm{1}_{\{|sZ_{1}|\leq1\}}\big]-s\int_{\mathbb{R}}%
\delta_{\lambda}^{1}\varphi(x)F_{\mu}(d\lambda)\bigg \vert \\
&  =\bigg \vert \int_{\mathbb{-\infty}}^{0}\delta_{\lambda}^{1}\varphi
(x)\big[\alpha \beta_{2}(-s^{-1}\lambda)+\beta_{2}^{\prime}(-s^{-1}%
\lambda)s^{-1}\lambda \big]|\lambda|^{-\alpha-1}d\lambda \\
&  \text{ \  \ }+\int_{0}^{\infty}\delta_{\lambda}^{1}\varphi(x)\big[\alpha
\beta_{2}(s^{-1}\lambda)-\beta_{2}^{\prime}(s^{-1}\lambda)s^{-1}%
\lambda \big]\lambda^{-\alpha-1}d\lambda \bigg \vert,
\end{align*}
for all $x\in \mathbb{R}^{d}$. Similar to the above process, consider the
integral above along the positive half-line, and denote
\begin{align*}
&  \bigg \vert \int_{0}^{\infty}\delta_{\lambda}^{1}\varphi(x)\big[\beta
_{2}(s^{-1}\lambda)-\beta_{2}^{\prime}(s^{-1}\lambda)s^{-1}\lambda
\big]\lambda^{-2}d\lambda \bigg \vert \\
&  =\bigg \vert \int_{0}^{1}\delta_{\lambda}^{1}\varphi(x)\big[\beta
_{2}(s^{-1}\lambda)-\beta_{2}^{\prime}(s^{-1}\lambda)s^{-1}\lambda
\big]\lambda^{-2}d\lambda \bigg \vert \\
&  \text{ \  \ }+\bigg \vert \int_{1}^{\infty}\delta_{\lambda}^{1}%
\varphi(x)\big[\beta_{2}(s^{-1}\lambda)-\beta_{2}^{\prime}(s^{-1}%
\lambda)s^{-1}\lambda \big]\lambda^{-2}d\lambda \bigg \vert:=\mathcal{I}%
+\mathcal{II}\text{.}%
\end{align*}
Then, we can deduce that
\begin{align*}
\mathcal{I}  &  =\bigg \vert-\delta_{1}^{1}\varphi(x)\beta_{2}(s^{-1}%
)+\int_{0}^{1}(D\varphi(x+\lambda)-D\varphi(x))\beta_{2}(s^{-1}\lambda
)\lambda^{-1}d\lambda \bigg \vert \\
&  \leq(2\left \Vert \varphi \right \Vert _{\infty}+\left \Vert D\varphi
\right \Vert _{\infty})|\beta_{2}(s^{-1})|+\left \Vert D^{2}\varphi \right \Vert
_{\infty}\int_{0}^{1}|\beta_{2}(s^{-1}\lambda)|d\lambda \rightarrow0\text{,
\ as }s\rightarrow0\text{,}%
\end{align*}
and
\[
\mathcal{II}\leq2\left \Vert \varphi \right \Vert _{\infty}|\beta_{2}%
(s^{-1})|+\left \Vert D\varphi \right \Vert _{\infty}\int_{1}^{\infty}|\beta
_{2}(s^{-1}\lambda)|\lambda^{-1}d\lambda \rightarrow0\text{, \ as }%
s\rightarrow0\text{.}%
\]
To sum up, the assumption (A2) holds.
\end{proof}

\subsection{Robust $\alpha$-stable central limit theorem}

We are ready to state the main result of this paper, which is dubbed as
\emph{a robust }$\alpha$\emph{-stable central\ limit theorem} under sublinear expectation.

\begin{theorem}
\label{main theorem} Suppose that the assumptions (A1)-(A2) hold.\ Then, there
exists a nonlinear L\'{e}vy process $(\tilde{\zeta}_{t})_{t\in \lbrack0,1]}$,
associated with the jump uncertainty set $\mathcal{L}$ such that for any
$\phi \in C_{b,Lip}(\mathbb{R}^{d})$,
\[
\lim_{n\rightarrow \infty}\mathbb{\hat{E}}\left[  \phi \left(  \frac{S_{n}%
}{\sqrt[\alpha]{n}}\right)  \right]  =\mathbb{\tilde{E}}[\phi(\tilde{\zeta
}_{1})]=u^{\phi}(1,0),
\]
where $u^{\phi}$ is the unique viscosity solution of the following fully
nonlinear PIDE
\begin{equation}
\left \{
\begin{array}
[c]{l}%
\displaystyle \partial_{t}u(t,x)-\sup \limits_{F_{\mu}\in \mathcal{L}}\left \{
\int_{\mathbb{R}^{d}}\delta_{\lambda}^{\alpha}u(t,x)F_{\mu}(d\lambda)\right \}
=0,\\
\displaystyle u(0,x)=\phi(x),\text{\  \  \ }\forall(t,x)\in \lbrack
0,1]\times \mathbb{R}^{d},
\end{array}
\right.  \label{PIDE}%
\end{equation}
where
\[
\delta_{\lambda}^{\alpha}u(t,x):=\left \{
\begin{array}
[c]{ll}%
u(t,x+\lambda)-u(t,x)-\langle D_{x}u(t,x),\lambda \mathbbm{1}_{\{|\lambda
|\leq1\}}\rangle, & \alpha=1,\\
u(t,x+\lambda)-u(t,x), & \alpha \in(0,1).
\end{array}
\right.
\]
Furthermore, the limiting process $(\tilde{\zeta}_{t})_{t\in \lbrack0,1]}$ is a
nonlinear symmetric $\alpha$-stable process, i.e. $\tilde{\zeta}_{t}%
\overset{d}{=}t^{1/\alpha}\tilde{\zeta}_{1}$ and $\tilde{\zeta}_{t}\overset
{d}{=}-\tilde{\zeta}_{t}$, for any $0\leq t\leq1$.
\end{theorem}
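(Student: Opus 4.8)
The plan is to combine a tightness/weak-convergence argument for the normalized sums with an identification of the limit via the consistency condition (A2) and a stability-by-scaling argument. First I would use the $\delta$-moment condition (A1) together with Remark \ref{ramark i.i.d.} to establish that the family $\{\mathbb{F}_{S_n/\sqrt[\alpha]{n}}\}_{n}$ is relatively compact in the sense of Definition \ref{converge in distribution}: along any subsequence one extracts a further subsequence along which $\mathbb{\hat{E}}[\phi(S_n/\sqrt[\alpha]{n})]$ converges for every $\phi\in C_{b,Lip}(\mathbb{R}^d)$, by the standard weak-convergence approach (tightness from the uniform $\delta$-moment bound $M_\delta<\infty$, then a diagonal argument over a countable dense subset of $C_{b,Lip}$). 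Writing $S_{2n}=\sum_{i=1}^n Z_i + \sum_{i=n+1}^{2n}Z_i$ with the two blocks forming an i.i.d. pair in the sublinear sense, Proposition \ref{independent copy converge in law} shows any such subsequential limit $\tilde\zeta_1$ satisfies $a\tilde\zeta_1 + b\tilde\zeta_1'\overset{d}{=}(a^\alpha+b^\alpha)^{1/\alpha}\tilde\zeta_1$ for dyadic-rational-compatible $a,b\ge0$, hence for all $a,b\ge0$ by continuity; symmetry $\tilde\zeta_1\overset{d}{=}-\tilde\zeta_1$ will follow because the $Z_i$ are non-integrable with a symmetric jump uncertainty set $\mathcal{L}$ (this needs the consistency condition (A2), whose right-hand side is symmetric under $\lambda\mapsto-\lambda$).

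Second, I would construct the candidate limit process and PIDE independently: define $u^\phi(t,x)$ as the unique (bounded, Lipschitz-in-$x$, continuous) viscosity solution of \eqref{PIDE}, existence and uniqueness coming from the comparison principle for this class of fully nonlinear PIDEs with $\alpha$-stable Lévy kernels (the $\alpha\in(0,1)$ case has no gradient term and is a pure nonlocal equation; the $\alpha=1$ case has the compensator $\lambda\mathbbm{1}_{\{|\lambda|\le1\}}$, standard in the viscosity theory of Lévy-type integro-PDEs). The nonlinear Lévy process $(\tilde\zeta_t)_{t\in[0,1]}$ is then the one whose marginals are given by $\mathbb{\tilde{E}}[\phi(\tilde\zeta_t)] = u^\phi(t,0)$, with stationary independent increments built from the DPP satisfied by $u^\phi$. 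Its symmetry and $\alpha$-self-similarity $\tilde\zeta_t\overset{d}{=}t^{1/\alpha}\tilde\zeta_1$ follow from the scaling invariance of the operator $F_\mu\mapsto\int\delta_\lambda^\alpha\,dF_\mu$ under $\lambda\mapsto c\lambda$ (which scales the $\alpha$-stable measure by $c^{-\alpha}$, absorbed into the set $\mathcal{L}$) combined with the form \eqref{F_mu}–\eqref{L_0} of $\mathcal{L}$.

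Third — and this is the heart of the convergence proof — I would show every subsequential limit equals $u^\phi(1,0)$ by a discrete-to-continuous consistency estimate. Set $u_n(t,x)=\mathbb{\hat{E}}[\phi(x+n^{-1/\alpha}S_{\lfloor nt\rfloor})]$; this satisfies the one-step recursion $u_n(t,x)=\mathbb{\hat{E}}[u_n(t-1/n,\,x+n^{-1/\alpha}Z_1)]$. Comparing one step of this scheme against the PIDE using (A2) with $s=1/n$ gives a local truncation error $l(1/n)\to0$ uniformly in $x$; then a stability (comparison) estimate for the scheme — exactly of the Barles–Jakobsen monotone-scheme type cited in the paper — propagates this to $|u_n(1,0)-u^\phi(1,0)|\to0$. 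The test functions in (A2) are only $C_b^3$, so for general $\phi\in C_{b,Lip}$ one first mollifies $\phi$, runs the argument, and removes the mollification using the uniform Lipschitz bound on $u^\phi(t,\cdot)$ and the $\delta$-moment control from (A1). The main obstacle is precisely this last step in the non-integrable regime: because $\mathbb{\hat{E}}[|n^{-1/\alpha}S_n|]=\infty$, one cannot bound increments in $L^1$, so the usual shift/regularization estimates must be carried out with the weaker $\delta$-moment bound, and for $\alpha=1$ one must handle the truncated-drift term $\lambda\mathbbm{1}_{\{|\lambda|\le1\}}$ carefully so that the compensation in (A2) matches the compensator in $\delta_\lambda^1 u$; getting a clean, $x$-uniform error requires the truncation technique for the $\alpha$-stable Lévy measure that the introduction flags as the key new ingredient. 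Finally, combining the subsequential uniqueness of the limit with the relative compactness from Step 1 yields convergence of the full sequence, and the identified limit $\tilde\zeta_1$ automatically has the nonlinear symmetric $\alpha$-stable properties and the stated PIDE characterization.
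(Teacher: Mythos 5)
There is a genuine gap, and it sits exactly where the paper says the difficulty lies. In your Step 2 you take the existence and uniqueness of the viscosity solution $u^{\phi}$ of (\ref{PIDE}) as given, ``coming from the comparison principle''. A comparison principle yields uniqueness only; existence of a viscosity solution for this fully nonlinear PIDE with $\alpha$-stable kernels and $\alpha\in(0,1]$ is precisely what is \emph{not} available in the literature (the case falls outside the integrability conditions of \cite{NN2017}, which is why the paper advertises existence as a byproduct of its probabilistic construction, quoting \cite{HP2021} only for uniqueness). Since your candidate process $(\tilde{\zeta}_{t})$ is then built from the DPP of $u^{\phi}$, the whole construction rests on an object you have not produced. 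The second unsupported step is the propagation in Step 3: a quantitative Barles--Jakobsen stability estimate is not a consequence of monotonicity, $L^{\infty}$-stability and the consistency (A2) alone; it requires continuous-dependence/regularity estimates for the limit equation (or smooth approximate solutions via shaking the coefficients and Krylov regularization), and the absence of such regularity estimates for the $\alpha\in(0,1]$ stable PIDE is exactly the reason the paper gives for why the method of \cite{BM2016} fails in this regime. So, as written, neither the existence of $u^{\phi}$ nor the scheme-to-PIDE error propagation is justified; the ``truncation technique'' you flag as the key new ingredient is never actually carried out.

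A qualitative Barles--Perthame--Souganidis half-relaxed-limits argument (monotone, stable, consistent scheme plus strong comparison) could in principle repair both points at once, since there existence is a conclusion rather than an input; but that is a different argument from the one you describe, and it still needs the attainment of the initial condition, i.e. a time modulus for $u_{n}$ near $t=0$ uniform in $n$, which in the non-integrable regime must be extracted from the $\delta$-moment condition (A1) exactly as in the paper's estimate (\ref{zata estimate}). The paper instead proceeds probabilistically: tightness (Lemma \ref{tight}) and a bisection/subsequence argument build a sublinear expectation on Skorokhod path space under which the canonical process is a nonlinear L\'evy process (Theorem \ref{The construction of Levy process}); the iterated one-step estimate of Theorem \ref{recursive}, whose key ingredient is the $\delta$-H\"older bound of Lemma \ref{Lipschitz} obtained by splitting the L\'evy measure at $|\lambda|=1$, gives the L\'evy--Khintchine-type representation (Theorem \ref{represent theorem}); the DPP then shows that $u(t,x)=\mathbb{\tilde{E}}[\phi(x+\tilde{\zeta}_{t})]$ is a viscosity solution, so existence comes out and uniqueness from \cite{HP2021} pins down every subsequential limit, yielding convergence of the full sequence. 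Your Step 1 (tightness and block decomposition) matches the paper's start, but your symmetry claim (``follows because the $Z_{i}$ are non-integrable with a symmetric $\mathcal{L}$'') is not an argument either: the paper derives $\tilde{\zeta}_{t}\overset{d}{=}-\tilde{\zeta}_{t}$ and $\tilde{\zeta}_{t}\overset{d}{=}t^{1/\alpha}\tilde{\zeta}_{1}$ from the explicit construction together with uniqueness for the PIDE applied to the reflected initial datum.
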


\subsection{{An example}\label{Example}}

We discuss a concrete example to illustrates how to construct the i.i.d.
sequence $\{Z_{i}\}_{i=1}^{\infty}$ and their corresponding sublinear
expectation space, as well as the rationality of the assumptions (A1)-(A2).

For simplicity, we consider the case $d=1$. Given $\underline{\Lambda
},\overline{\Lambda}>0$. Let $F_{\mu}$ be the L\'{e}vy measure with the
symmetric measure $\mu$ concentrated on the points $S_{0}=\{1,-1\}$ and
\[
\mathcal{L}=\left \{  F_{\mu}\  \text{measure on }\mathbb{R}^{2}:\mu(\chi
)\in(\underline{\Lambda},\overline{\Lambda}),\  \text{for }\chi \in
S_{0}\right \}  .
\]
Denote $K=(\underline{\Lambda},\overline{\Lambda})$ and $k=\mu \{1\}=\mu
\{-1\}$. Let $0<\delta<\alpha$ and $f(n)$ be a non-negative function on
$\mathbb{N}$ tending to 0 as $n\rightarrow \infty$. For each $k\in K$, let
$W_{k}$ be a classical random variable such that

\begin{description}
\item[(i)] $W_{k}$ has a cumulative distribution function
\[
F_{W_{k}}(x)=\left \{
\begin{array}
[c]{ll}%
\displaystyle \left[  k/\alpha+\beta_{k}(-x)\right]  \frac{1}{|x|^{\alpha}}, &
x<0,\\
\displaystyle1-\left[  k/\alpha+\beta_{k}(x)\right]  \frac{1}{x^{\alpha}}, &
x>0,
\end{array}
\right.
\]
where $\beta_{k}:[0,\infty)\rightarrow \mathbb{R}$ are continuously
differentiable functions such that there exist $\gamma \geq0$ and $C>0$
satisfying
\[
|\beta_{k}(x)|\leq \frac{C}{|x|^{\gamma}}\text{, \ }x\neq0.
\]

\item[(ii)] {the following quantities are less than $f(n)$\ for all $n$:}
\[%
\begin{array}
[c]{lll}%
\displaystyle|\beta_{k}(n^{1/\alpha})|,\text{ \ } & \displaystyle \int
_{1}^{\infty}\frac{|\beta_{k}(n^{1/\alpha}x)|}{x^{1+\alpha-\delta}}dx,\text{
\ } & \displaystyle \int_{0}^{1}\frac{|\beta_{k}(n^{1/\alpha}x)|}{x^{\alpha}%
}dx.
\end{array}
\]

\end{description}

{Let $\Omega=\mathbb{R}$ and $\mathcal{H}_{0}$ be the space of bounded and
continuous functions on $\mathbb{R}$. For each $X=f(x)\in \mathcal{H}_{0}$,
define the sublinear expectation}
\[
\mathbb{\hat{E}}[X]=\sup_{k\in K}\int_{\mathbb{R}}f(x)dF_{W_{k}}(x).
\]
Denote by $\mathcal{H}$\ the completion of $\mathcal{H}_{0}$ under the
norm$\  \left \Vert X\right \Vert :=\mathbb{\hat{E}}[|X|]$. $\mathbb{\hat
{E}[\cdot]}$ can be extended continuously to $\mathcal{H}$, on which it is a
sublinear expectation and is still denoted as $\big(\Omega,\mathcal{H}
,\mathbb{\hat{E}}\big)$, see \cite{DHP2011} and \cite{P2010} for a detailed discussion.

Consider an $\mathbb{R}$-valued random variable $Z(z)=z$, $z\in \mathbb{R}$.
Clearly, $Z^{\delta,m}:=|Z|^{\delta}\wedge m\in \mathcal{H}_{0}$\ for\ any
$m>0$. We claim that $\left \{  Z^{\delta,m}\right \}  _{m=1}^{\infty}$ is a
Cauchy sequence, which yields that $\mathbb{\hat{E}}[|Z|^{\delta}%
]:=\lim \limits_{m\rightarrow \infty}\mathbb{\hat{E}}[Z^{\delta,m}]$ and
$|Z|^{\delta}\in \mathcal{H}_{0}$. Indeed, for any $Z^{\delta,m}$,
$Z^{\delta,n}\in \mathcal{H}$, we have%
\begin{align*}
\mathbb{\hat{E}}[|Z^{\delta,m}-Z^{\delta,n}|] &  \leq \mathbb{\hat{E}}\left[
\left \vert |Z|^{\delta}\wedge m-|Z|^{\delta}\right \vert \right]
+\mathbb{\hat{E}}\left[  \left \vert |Z|^{\delta}\wedge n-|Z|^{\delta
}\right \vert \right]  \\
&  \leq \sup_{k\in K}\int_{\mathbb{R}}|z|^{\delta}\mathbbm{1}_{\{|z|^{\delta
}>m\}}dF_{W_{k}}(z)+\sup_{k\in K}\int_{\mathbb{R}}|z|^{\delta}%
\mathbbm{1}_{\{|z|^{\delta}>n\}}dF_{W_{k}}(z)\\
&  =2m^{\frac{\delta-\alpha}{\delta}}\sup_{k\in K}\left \{  \frac{k}%
{\alpha-\delta}+\delta \int_{1}^{\infty}\frac{\beta_{k}(m^{1/\delta}%
z)}{z^{1+\alpha-\delta}}dz+\beta_{k}(m^{1/\delta})\right \}  \\
&  \text{ \  \ }+2n^{\frac{\delta-\alpha}{\delta}}\sup_{k\in K}\left \{
\frac{k}{\alpha-\delta}+\delta \int_{1}^{\infty}\frac{\beta_{k}(n^{1/\delta}%
z)}{z^{1+\alpha-\delta}}dz+\beta_{k}(n^{1/\delta})\right \}
\end{align*}
tending to 0, as $m,n\rightarrow \infty$. A similar calculation shows that $|Z|\notin \mathcal{H}$.

Next, we will verify the assumption (A1). Let $\{Z_{i}\}_{i=1}^{\infty}$ be a
sequence of i.i.d. $\mathbb{R}$-valued random variables in the sense that
$Z_{1}\overset{d}{=}Z$, $Z_{i+1}\overset{d}{=}Z_{i}$, and $Z_{i+1}%
\perp \! \! \! \perp(Z_{1},Z_{2},\ldots,Z_{i})$ for each $i\in \mathbb{N}$. For
given $n>0$, define an approximation scheme $u_{n}:[0,1]\times \mathbb{R}%
\rightarrow \mathbb{R}$ recursively by
\begin{equation}%
\begin{array}
[c]{l}%
u_{n}(t,z)=|z|^{\delta},\text{ \ if\ }t\in \lbrack0,1/n),\\
u_{n}(t,z)=\mathbb{\hat{E}}[u_{n}(t-1/n,z+n^{-1/\alpha}Z_{1})]\text{,
\ if\ }t\in \lbrack1/n,1].
\end{array}
\label{2.2}%
\end{equation}
Then, $u_{n}(1,0)=n^{-\frac{\delta}{\alpha}}\mathbb{\hat{E}}[|S_{n}|^{\delta
}]$ and $u_{n}(0,0)=0$. For any $t,s\in \lbrack0,1]$ and $z\in \mathbb{R}$, we
deduce the following estimate, whose proof is postponed to the Appendix
\ref{Appendix 2} due to its technicality
\[
\left \vert u_{n}(t,z)-u_{n}(s,z)\right \vert \leq I_{n}(|t-s|^{\delta
/2}+n^{-\delta/2}),
\]
where $I_{n}<\infty$ as $n\rightarrow \infty$. It follows that
\[
u_{n}(1,0)=u_{n}(1,0)-u_{n}(0,0)<\infty \text{, \ as }n\rightarrow \infty,
\]
which implies (A1) holds.

For the assumption (A2), following along similar arguments as in Proposition
\ref{Remark}, we can derive that for $\varphi \in C_{b}^{3}(\mathbb{R})$,
\[
\frac{1}{s}\bigg \vert \mathbb{\hat{E}}\big[\varphi(z+s^{\frac{1}{\alpha}%
}Z_{1})-\varphi(z)\big]-s\sup_{F_{\mu}\in \mathcal{L}}\int_{\mathbb{R}^{2}%
}\delta_{\lambda}^{\alpha}\varphi(z)F_{\mu}(d\lambda)\bigg \vert \rightarrow
0,
\]
uniformly on $z\in \mathbb{R}$ as $s\rightarrow0$.

\begin{remark}
\label{Remark Pareto}Assume that $\mathcal{L}=\{F_{\mu}\}$ is a singleton and
$\{Z_{i}\}_{i=1}^{\infty}$ is a sequence of i.i.d. random variables with a
Pareto distribution in \cite{DN2002,JP1998,KK2001}, i.e., the probability
density is
\[
p(x)=\frac{\alpha}{2|x|^{\alpha+1}}\mathbbm{1}_{[1,\infty)}(|x|).
\]
It is easy to check that the above distribution satisfies the conditions
(i)-(ii). Theorem \ref{main theorem} implies that $\frac{1}{\sqrt[\alpha]{n}%
}S_{n}$\ converges in distribution to $\tilde{\zeta}_{1}$, as $n\rightarrow
\infty$, where $\tilde{\zeta}_{1}$ is a classical symmetric $\alpha$-stable
distribution with L\'{e}vy triplet $(F_{\mu},0,0)$.
\end{remark}

\section{Proof of Theorem \ref{main theorem}}

\label{Proof of Main Theorem}

\subsection{Outline of the proof}

For the reader's convenience, we outline the main steps below with the
detailed proof provided in the rest of this section.

\textbf{(i)} By using the notions of tightness and weak compactness, we first
construct a nonlinear L\'{e}vy process $(\tilde{\zeta}_{t})_{t\in[0,1]}$ on
some sublinear expectation space $(\tilde{\Omega},Lip(\tilde{\Omega
}),\mathbb{\tilde{E}})$ in Section
\ref{Section_The construction of Levy process}, which is generated by the weak
convergence limit of the sequences $\big \{(t/n)^{1/\alpha}S_{n}%
\big \}_{n=1}^{\infty}$ for $t\in \lbrack0,1]$.

\textbf{(ii)} To link the nonlinear L\'{e}vy process $(\tilde{\zeta}%
_{t})_{t\in \lbrack0,1]}$ with the fully nonlinear PIDE (\ref{PIDE}), a key
step is to give the characterization of
\[
\lim \limits_{s\rightarrow0}\mathbb{\tilde{E}}[\varphi(\tilde{\zeta}%
_{s})]s^{-1}\text{ for }\varphi \in C_{b}^{3}(\mathbb{R}^{d})\text{ with
}\varphi(0)=0.
\]
It follows from a new estimate for the $\alpha$-stable L\'{e}vy measure and
the squeeze theorem in Section \ref{Section_Representation of Levy process}.

\textbf{(iii)} Once the representation of the nonlinear L\'{e}vy process is
established, with the help of nonlinear stochastic analysis techniques and
viscosity solution methods, Theorem \ref{main theorem} is a consequence of the
dynamic programming principle in Section \ref{Section_connection PIDE} by
defining $u(t,x)=\mathbb{\tilde{E}}[\phi(x+\tilde{\zeta}_{t})]$, for
$(t,x)\in \lbrack0,1]\times \mathbb{R}^{d}$.

\textbf{(iv)} Finally, we show the scaling and symmetric properties of
$(\tilde{\zeta}_{t})_{t\in[0,1]}$ in Section
\ref{Section_Scaling and symmetric properties}, so it is indeed a nonlinear
symmetric $\alpha$-stable process.

\subsection{The construction of the nonlinear $\alpha$-stable process}

\label{Section_The construction of Levy process}

Let $\tilde{\Omega}=D_{0}^{d}[0,1]$ be the space of all $\mathbb{R}^{d}%
$-valued paths $(\omega_{t})_{t\in \lbrack0,1]}$ with $\omega_{0}=0$, equipped
with the Skorohod topology, where $D_{0}^{d}[0,1]$ is the space of
$\mathbb{R}^{d}$-valued c\`{a}dl\`{a}g paths. Consider the canonical process
$\tilde{\zeta}_{t}(\omega)=\omega_{t}$, $t\in \lbrack0,1]$, for $\omega
\in \tilde{\Omega}$. Set
\[
Lip(\tilde{\Omega})=\left \{  \varphi(\tilde{\zeta}_{t_{1}},\ldots,\tilde
{\zeta}_{t_{n}}-\tilde{\zeta}_{t_{n-1}}):\forall0\leq t_{1}<t_{2}<\cdots
<t_{n}\leq1,\varphi \in C_{b,Lip}(\mathbb{R}^{d\times n})\right \}  .
\]

We have the following theorem, which can be regarded as a Donsker theorem for
the nonlinear\ L\'{e}vy process $(\tilde{\zeta}_{t})_{t\in \lbrack0,1]}$.

\begin{theorem}
\label{The construction of Levy process}Assume that (A1) holds. Then, there
exists a sublinear expectation $\mathbb{\tilde{E}}$ on $(\tilde{\Omega
},Lip(\tilde{\Omega}))$ such that the sequence $\{n^{-1/\alpha}S_{n}%
\}_{n=1}^{\infty}$ converges in distribution to $\tilde{\zeta}_{1}$, where
$(\tilde{\zeta}_{t})_{t\in \lbrack0,1]}$ is a nonlinear L\'{e}vy process on
$(\tilde{\Omega},Lip(\tilde{\Omega}), \mathbb{\tilde{E}})$.
\end{theorem}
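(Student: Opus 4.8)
The plan is to realize $(\tilde{\zeta}_t)_{t\in[0,1]}$ as a weak-limit object on the Skorohod space $\tilde{\Omega}=D_0^d[0,1]$ by a tightness/weak-compactness argument, mimicking the weak convergence approach of \cite{P2010_CLT} and \cite{HJLP2022} but replacing their moment control with the $\delta$-moment condition (A1). First I would build, for each $n$, the natural piecewise-constant interpolation $W^n_t := n^{-1/\alpha} S_{\lfloor nt\rfloor}$ of the random walk, viewed as a $D_0^d[0,1]$-valued random variable, and push forward the sublinear expectation $\mathbb{\hat{E}}$ to obtain a sublinear expectation $\mathbb{\hat{E}}^n$ on $C_{b,Lip}(\tilde{\Omega})$ via $\mathbb{\hat{E}}^n[\Phi]:=\mathbb{\hat{E}}[\Phi(W^n_\cdot)]$. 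The i.i.d. structure of $\{Z_i\}$ together with Proposition \ref{independent copy converge in law} and Remark \ref{ramark i.i.d.} gives that finite-dimensional functionals of $W^n$ are compositions of the $S_k$'s, so $\mathbb{\hat{E}}^n$ is well defined on cylinder functions; in particular the increment $W^n_t-W^n_s$ is (asymptotically) distributed as $(t-s)^{1/\alpha}$ times a copy of $n^{-1/\alpha}S_n$, which is the structural seed of stationarity and independence of increments in the limit.

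Next I would establish tightness of the family $\{\mathbb{\hat{E}}^n\}$ in the sublinear (upper-capacity) sense. Here the $\delta$-moment bound $M_\delta=\sup_n\mathbb{\hat{E}}[|n^{-1/\alpha}S_n|^\delta]<\infty$ from (A1) does the work of the second-moment bound in the classical case: by subadditivity and the scaling $S_{n}$ under $\mathbb{\hat{E}}$, one gets $\mathbb{\hat{E}}[|W^n_t-W^n_s|^\delta]\le C\,|t-s|^{\delta/\alpha}\,M_\delta$ for $s,t$ on the grid (and a harmless $n^{-\delta/\alpha}$ correction off the grid, exactly as in the estimate $|u_n(t,z)-u_n(s,z)|\le I_n(|t-s|^{\delta/2}+n^{-\delta/2})$ appearing in the Appendix-referenced computation), which yields an Aldous-type / Kolmogorov-Chentsov modulus-of-continuity control sufficient for tightness on $D_0^d[0,1]$ under the Skorohod topology. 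Combined with weak compactness of the space of sublinear expectations (the Prokhorov-type theorem used in \cite{HJLP2022,P2010_CLT}), we extract a subsequential weak limit $\mathbb{\tilde{E}}$ on $(\tilde{\Omega},Lip(\tilde{\Omega}))$, i.e. $\mathbb{\hat{E}}^{n_k}[\Phi]\to\mathbb{\tilde{E}}[\Phi]$ for all $\Phi\in C_{b,Lip}$ depending on finitely many coordinates.

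Then I would identify the limit object. Passing to the limit in the identities for finite-dimensional distributions of $W^n$ shows that, under $\mathbb{\tilde{E}}$, the canonical process $\tilde{\zeta}$ satisfies $\tilde{\zeta}_0=0$, has increments that are identically distributed with $\tilde{\zeta}_{t-s}$ (stationarity), and $\tilde{\zeta}_t-\tilde{\zeta}_s$ independent from $(\tilde{\zeta}_{t_1},\dots,\tilde{\zeta}_{t_m})$ for $t_1\le\cdots\le t_m\le s\le t$ (independence), because these are closed properties under convergence of the cylinder functionals and hold prelimit up to $o(1)$ by Definition \ref{converge in distribution} and Remark \ref{ramark i.i.d.}. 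Hence $(\tilde{\zeta}_t)_{t\in[0,1]}$ is a nonlinear L\'evy process in the sense of the Definition recalled in Section 2, and taking $t=1$ with $\Phi=\phi\circ(\text{evaluation at }1)$ gives $\mathbb{\hat{E}}[\phi(n^{-1/\alpha}S_n)]\to\mathbb{\tilde{E}}[\phi(\tilde{\zeta}_1)]$, which is convergence in distribution of $\{n^{-1/\alpha}S_n\}$ to $\tilde{\zeta}_1$. A final remark is that the whole sequence converges (not just a subsequence): since by Definition \ref{converge in distribution} each $\{\mathbb{\hat{E}}[\phi(n^{-1/\alpha}S_n)]\}_n$ will be shown Cauchy once (A2) is in force — but for the present theorem, which only assumes (A1), one records subsequential convergence and the genuine identification of the limiting PIDE/stable structure is deferred to the later sections that invoke (A2).

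The step I expect to be the main obstacle is the tightness estimate on the Skorohod space with only a $\delta$-th moment for $\delta<\alpha\le 1$: the exponent is below $1$, so the usual Kolmogorov-type criterion must be applied in the form adapted to $D[0,1]$ (controlling the Aldous modulus via increments over three adjacent grid points, handling jumps), and one must check carefully that the prefactor $I_n$ in $|u_n(t,z)-u_n(s,z)|\le I_n(|t-s|^{\delta/2}+n^{-\delta/2})$ stays bounded as $n\to\infty$ uniformly — this is precisely where (A1) is used and where the non-integrability of the limit forces the $\delta$-truncation argument rather than a direct moment bound. Everything else (pushforward, extraction of a weakly convergent subsequence, passage to the limit in the i.i.d./stationarity identities) is routine given the machinery already set up in the paper and in \cite{HJLP2022,P2010_CLT}.
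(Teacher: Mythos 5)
There is a genuine gap in your route. You propose to realize the limit by proving tightness of the interpolated processes $W^n_t=n^{-1/\alpha}S_{\lfloor nt\rfloor}$ on the Skorohod space and then invoking a Prokhorov-type extraction on path space. But the only tightness/weak-compactness machinery available in this sublinear framework (Definition \ref{def_2.7}, Theorem \ref{tight theorem}, Corollary \ref{corollary tight theorem}) is for sublinear expectations on the finite-dimensional spaces $(\mathbb{R}^n,C_{b,Lip}(\mathbb{R}^n))$; there is no Aldous/Kolmogorov--Chentsov or Prokhorov theorem on $D_0^d[0,1]$ for sublinear expectations (suprema over possibly non-dominated families), and your increment bound $\mathbb{\hat{E}}[|W^n_t-W^n_s|^\delta]\le C|t-s|^{\delta/\alpha}M_\delta$ carries the exponent $\delta/\alpha<1$, which would not suffice for the classical path-space criteria even in the linear case. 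You yourself flag this as ``the main obstacle'' but do not resolve it, and the regularity estimate $|u_n(t,z)-u_n(s,z)|\le I_n(|t-s|^{\delta/2}+n^{-\delta/2})$ you lean on is part of the verification of (A1) in the example of Section 3.3, not a path-regularity input for this theorem. A second unjustified step is passing stationarity and independence of increments ``to the limit'': under sublinear expectations, convergence of marginal distributions does not yield convergence of joint distributions, and independence is an order-sensitive notion, so the limiting identities for cylinder functionals cannot simply be inherited from the prelimit.

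The paper's proof avoids path-space tightness altogether: it applies the finite-dimensional tightness of Lemma \ref{tight} to extract a convergent subsequence of $\bar S_n=n^{-1/\alpha}S_n$, then repeatedly halves the sums and re-extracts subsequences to produce marginals $\zeta_{1/2^m}$, using Proposition \ref{independent copy converge in law} and Remark \ref{ramark i.i.d.} to get $\zeta_1\overset{d}{=}\zeta_{1/2}+\bar\zeta_{1/2}$ and its iterates, and finally defines $\mathbb{\tilde{E}}$ on $Lip(\tilde\Omega)$ by an explicit nested (consistent) construction at dyadic times, in which independence and stationarity of increments hold by construction; the extension to all $t\in[0,1]$ is controlled by the $\delta$-moment estimate from (A1). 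Your proposal also delivers only subsequential convergence, whereas the theorem asserts convergence of the whole sequence $\{n^{-1/\alpha}S_n\}$; the paper closes this by identifying any subsequential limit through $u(1,0)$, the unique viscosity solution of the PIDE (Theorem \ref{unique viscosity theorem}), and a contradiction argument, a step you explicitly defer. As written, your argument therefore neither constructs $\mathbb{\tilde{E}}$ on $Lip(\tilde\Omega)$ by tools available in this setting nor proves the stated convergence.
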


The proof of the above theorem depends on the following lemma.

\begin{lemma}
\label{tight}Assume that (A1) holds. For $\phi \in C_{b,Lip}(\mathbb{R}^{d})$,
let
\[
\mathbb{\hat{F}}[\phi]:=\sup_{n}\mathbb{\hat{E}}\left[  \phi \left(
\frac{S_{n}}{\sqrt[\alpha]{n}}\right)  \right]  .
\]
Then, the sublinear expectation $\mathbb{\hat{F}}$ on $(\mathbb{R}%
^{d},C_{b,Lip}(\mathbb{R}^{d}))$ is tight in the sense of Definition
\ref{def_2.7} in the Appendix.
\end{lemma}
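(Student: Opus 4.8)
The plan is to establish tightness of $\mathbb{\hat{F}}$ by verifying the defining condition of tightness (Definition \ref{def_2.7}), namely that for every $\varepsilon>0$ there is a compact set $K_\varepsilon\subset\mathbb{R}^d$ such that $\mathbb{\hat{F}}[\varphi]<\varepsilon$ for all $\varphi\in C_{b,Lip}(\mathbb{R}^d)$ with $0\le\varphi\le\mathbbm{1}_{K_\varepsilon^c}$. Since $\mathbb{\hat{F}}[\phi]=\sup_n\mathbb{\hat{E}}[\phi(n^{-1/\alpha}S_n)]$ is a supremum over $n$, it suffices to produce a single bound that is uniform in $n$, and the natural device for this is the $\delta$-moment condition (A1), which gives exactly such a uniform control $M_\delta=\sup_n\mathbb{\hat{E}}[|n^{-1/\alpha}S_n|^\delta]<\infty$.

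First I would fix $R>0$ and take $K_R=\{x\in\mathbb{R}^d:|x|\le R\}$. For any $\varphi\in C_{b,Lip}(\mathbb{R}^d)$ with $0\le\varphi\le\mathbbm{1}_{K_R^c}$, I would dominate $\varphi(x)\le \mathbbm{1}_{\{|x|>R\}}\le R^{-\delta}|x|^\delta\mathbbm{1}_{\{|x|>R\}}\le R^{-\delta}|x|^\delta$ — but since $\varphi$ must be bounded and continuous to apply $\mathbb{\hat{E}}$, I would instead introduce an auxiliary function $\psi_R\in C_{b,Lip}(\mathbb{R}^d)$ with $\mathbbm{1}_{K_R^c}\le\psi_R\le\mathbbm{1}_{K_{R/2}^c}$ and note $\psi_R(x)\le (2/R)^\delta(|x|^\delta\wedge (R/2)^\delta)\le (2/R)^\delta|x|^\delta$, where $|x|^\delta\wedge(R/2)^\delta$ (or rather a $C_{b,Lip}$ smoothing of it) is admissible. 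Then by monotonicity and positive homogeneity of $\mathbb{\hat{E}}$,
\[
\mathbb{\hat{F}}[\varphi]\le\mathbb{\hat{F}}[\psi_R]=\sup_n\mathbb{\hat{E}}[\psi_R(n^{-1/\alpha}S_n)]\le\Big(\tfrac{2}{R}\Big)^\delta\sup_n\mathbb{\hat{E}}\big[|n^{-1/\alpha}S_n|^\delta\big]=\Big(\tfrac{2}{R}\Big)^\delta M_\delta.
\]
Given $\varepsilon>0$, choosing $R=R_\varepsilon$ large enough that $(2/R_\varepsilon)^\delta M_\delta<\varepsilon$ yields the required compact set $K_{R_\varepsilon}$, completing the verification.

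The one technical subtlety — and the only place requiring care — is the admissibility of $|x|^\delta$ (equivalently $|x|^\delta\wedge m$) as an element to which $\mathbb{\hat{E}}$ and the moment bound apply: strictly speaking (A1) controls $\mathbb{\hat{E}}[|n^{-1/\alpha}S_n|^\delta]$, which is well-defined precisely because $|\cdot|^\delta\wedge m\in\mathcal H$ and the resulting sequence in $m$ is Cauchy (as illustrated in the example of Section \ref{Example}); one then passes to the limit and uses that $\psi_R$, being bounded Lipschitz and dominated by a constant multiple of $|x|^\delta$, satisfies $\mathbb{\hat{E}}[\psi_R(n^{-1/\alpha}S_n)]\le (2/R)^\delta\mathbb{\hat{E}}[|n^{-1/\alpha}S_n|^\delta\wedge(R/2)^\delta]\le(2/R)^\delta M_\delta$ directly at the level of bounded functions, avoiding any limiting argument. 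I expect this bookkeeping about which functions lie in $\mathcal H$ to be the main (and essentially the only) obstacle; once it is settled, the tightness estimate is immediate from (A1) and the sublinearity properties (i)–(iv) of $\mathbb{\hat{E}}$.
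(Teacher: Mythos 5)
Your proof is correct and essentially identical to the paper's: the paper likewise takes a Lipschitz cutoff $\varphi_{N}$ with $\mathbbm{1}_{\{|x|>N\}}\leq\varphi_{N}\leq\mathbbm{1}_{\{|x|>N-1\}}$, dominates it by $|x|^{\delta}/(N-1)^{\delta}$, and concludes via monotonicity and (A1), which is exactly your $\psi_{R}$ argument with the bound $(2/R)^{\delta}M_{\delta}$ (and the same bookkeeping about $|x|^{\delta}$ versus its bounded truncation). The only cosmetic difference is that Definition \ref{def_2.7} asks for a single $\varphi\in C_{b,Lip}(\mathbb{R}^{d})$ with $\mathbbm{1}_{\{|x|\geq N\}}\leq\varphi$ and $\mathbb{\hat{F}}[\varphi]<\varepsilon$, which your $\psi_{R}$ supplies directly, so your slightly different paraphrase of tightness causes no gap.
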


\begin{proof}
It is clear that $\mathbb{\hat{F}}$ is a sublinear expectation on
$(\mathbb{R}^{d},C_{b,Lip}(\mathbb{R}^{d}))$. Now we show that $\mathbb{\hat
{F}}$ is tight. For any $N>0$, define
\[
\varphi_{N}(x)=\left \{
\begin{array}
[c]{ll}%
1, & |x|>N,\\
|x|-N+1, & N-1\leq|x|\leq N,\\
0, & |x|<N-1.
\end{array}
\right.
\]
One can easily check that $\varphi_{N}\in C_{b,Lip}(\mathbb{R})$ and
\[
\mathbbm{1}_{\{|x|>N\}}\leq \varphi_{N}(x)\leq \mathbbm{1}_{\{|x|>N-1\}}%
\leq \frac{|x|^{\delta}}{(N-1)^{\delta}},
\]
for $0<\delta<\alpha$. Thus, for each $\varepsilon>0$, we can choose
$N_{0}>\sqrt[\delta]{M_{\delta}/\varepsilon}+1$ such that
\[
\mathbb{\hat{F}}[\varphi_{N_{0}}(z)]=\sup_{n}\mathbb{\hat{E}}\left[
\varphi_{N_{0}}\left(  \frac{S_{n}}{\sqrt[\alpha]{n}}\right)  \right]
\leq \frac{M_{\delta}}{(N_{0}-1)^{\delta}}<\varepsilon,
\]
where we have used the assumption (A1). This proves the desired result.
\end{proof}

\begin{proof}
[Proof of Theorem \ref{The construction of Levy process}]We denote $\bar
{S}_{n}=n^{-1/\alpha}S_{n}$. Seeing that $\mathbb{\hat{F}}$ is tight and
\[
\mathbb{\hat{E}}[\phi(\bar{S}_{n})]-\mathbb{\hat{E}}[\phi^{\prime}(\bar{S}%
_{n})]\leq \mathbb{\hat{F}}[\phi-\phi^{\prime}],\text{ for }\phi,\phi^{\prime
}\in C_{b,Lip}(\mathbb{R}^{d}),
\]
by Corollary \ref{corollary tight theorem}, there exists a subsequence $\{
\bar{S}_{n_{i}}\}_{i=1}^{\infty}\subset \{ \bar{S}_{n}\}_{n=1}^{\infty}$ which
converges in distribution to some $\zeta_{1}$ in $(\Omega,\mathcal{H}%
,\mathbb{\hat{E}}_{1})$. For the above convergent subsequence $\{ \bar
{S}_{n_{i}}\}_{i=1}^{\infty}$, it is clear that for an arbitrarily increasing
integers of $\{ \tilde{n}_{i}\}_{i=1}^{\infty}$ such that $|\tilde{n}%
_{i}-n_{i}|\leq1$, both $\{ \bar{S}_{n_{i}}\}_{i=1}^{\infty}$ and $\{ \bar
{S}_{\tilde{n}_{i}}\}_{i=1}^{\infty}$ converges in distribution to the same
limit. Thus, without loss of generality, we assume that $n_{i}$,
$i=1,2,\ldots$, are all even numbers and decompose into two parts:
\[
\bar{S}_{n_{i}}=\frac{1}{\sqrt[\alpha]{2}}(n_{i}/2)^{-\frac{1}{\alpha}%
}S_{n_{i}/2}+\frac{1}{\sqrt[\alpha]{2}}(n_{i}/2)^{-\frac{1}{\alpha}}(S_{n_{i}%
}-S_{n_{i}/2}):=\  \bar{S}_{n_{i}/2}^{1/2}+(\bar{S}_{n_{i}}-\bar{S}_{n_{i}%
/2}^{1/2}),
\]
where $\bar{S}_{n}^{t}:=(t/n)^{1/\alpha}S_{n}$ for $t\in \lbrack0,1)$. For the
first part, applying the same argument again, we prove that there exists a
subsequence $\big \{ \bar{S}_{n_{i}^{1}/2}^{1/2}\big \}_{i=1}^{\infty}$
$\subset \big \{ \bar{S}_{n_{i}/2}^{1/2}\big \}_{i=1}^{\infty}\ $such that
$\big \{ \bar{S}_{n_{i}^{1}/2}^{1/2}\big \}_{i=1}^{\infty}$ converging in
distribution to $\zeta_{1/2}$. Since $\bar{S}_{n_{i}^{1}}-\bar{S}_{n_{i}%
^{1}/2}^{1/2}$ is an independent copy of $\bar{S}_{n_{i}^{1}/2}^{1/2}$, by
Proposition \ref{independent copy converge in law}, we know that
\[
\bar{S}_{n_{i}^{1}}-\bar{S}_{n_{i}^{1}/2}^{1/2}\overset{\mathcal{D}%
}{\rightarrow}\bar{\zeta}_{1/2}\text{ \ and \ }\bar{S}_{n_{i}^{1}}%
\overset{\mathcal{D}}{\rightarrow}\zeta_{1/2}+\bar{\zeta}_{1/2},
\]
where $\bar{\zeta}_{1/2}$ is an independent copy of $\zeta_{1/2}$. In
addition, $\bar{S}_{n_{i}^{1}}\overset{\mathcal{D}}{\rightarrow}\zeta_{1}$.
Thus $\zeta_{1}\overset{d}{=}\zeta_{1/2}+\bar{\zeta}_{1/2}.$

Repeating the previous procedure for $\bar{S}_{n_{i}^{1}/2}^{1/2}$, we can
define random variable $\zeta_{1/4}$. Proceeding in this way, one can obtain
$\zeta_{1/2^{m}}$ in $(\Omega,\mathcal{H},\mathbb{\hat{E}}_{1})$,
$m\in \mathbb{N}$, such that for each $\zeta_{1/2^{m}}$ there exists a
convergent sequence $\{ \bar{S}_{n_{i}^{m}/2^{m}}^{1/2^{m}}\}_{i=1}^{\infty}$
converging in distribution to it. Finally, using the random variables
$\zeta_{1/2^{m}}(m\in \mathbb{N)}$, we may construct a sublinear expectation
$\mathbb{\tilde{E}}$ on $(\tilde{\Omega},Lip(\tilde{\Omega}))$ such that the
canonical process $(\tilde{\zeta}_{t})_{t\in \lbrack0,1]}$ is a nonlinear
L\'{e}vy process.

Indeed, we will construct a sublinear expectation $\mathbb{\tilde{E}%
}:Lip(\tilde{\Omega})\rightarrow \mathbb{R}$ such that the canonical process
$(\tilde{\zeta}_{t})_{t\in \lbrack0,1]}$ is a nonlinear L\'{e}vy process. For
each $m\geq0$, let $\tau_{m}=2^{-m}$,%
\[%
\begin{array}
[c]{l}%
\displaystyle \mathcal{H}^{m}=\big \{ \varphi \big(\tilde{\zeta}_{\tau_{m}%
},\tilde{\zeta}_{2\tau_{m}}-\tilde{\zeta}_{\tau_{m}},\ldots,\tilde{\zeta
}_{2^{m}\tau_{m}}-\tilde{\zeta}_{(2^{m}-1)\tau_{m}}\big):\forall \varphi \in
C_{b,Lip}\big(\mathbb{R}^{d\times2^{m}}\big)\big \} \text{, \ for }m\geq1,\\
\displaystyle \mathcal{H}^{0}=\big \{ \phi \big(\tilde{\zeta}_{1}%
\big):\forall \phi \in C_{b,Lip}\big(\mathbb{R}^{d}\big)\big \}.
\end{array}
\]
Let $\{ \zeta_{\tau_{m}}^{n}\}_{n=1}^{\infty}$ be a sequence of i.i.d.
$\mathbb{R}^{d}$-valued random variables on $(\Omega,\mathcal{H}%
,\mathbb{\hat{E}}_{1})$ in the sense that $\zeta_{\tau_{m}}^{1}\overset{d}%
{=}\zeta_{\tau_{m}}$, $\zeta_{\tau_{m}}^{n+1}\overset{d}{=}\zeta_{\tau_{m}%
}^{n}$ and $\zeta_{\tau_{m}}^{n+1}\perp \! \! \! \perp(\zeta_{\tau_{m}}%
^{1},\zeta_{\tau_{m}}^{2},\ldots,\zeta_{\tau_{m}}^{n})$ for each
$n\in \mathbb{N}$. For given $m\geq1$, $\phi \big(\tilde{\zeta}_{n\tau_{m}%
}-\tilde{\zeta}_{(n-1)\tau_{m}}\big)$ with $1\leq n\leq2^{m}$, and $\phi \in
C_{b,Lip}(\mathbb{R}^{d})$, define
\[
\mathbb{\tilde{E}}^{m}\big[\phi \big(\tilde{\zeta}_{n\tau_{m}}-\tilde{\zeta
}_{(n-1)\tau_{m}}\big)\big]=\mathbb{\hat{E}}_{1}\big[\phi(\zeta_{\tau_{m}}%
^{n})\big].
\]
For $\varphi \big(\tilde{\zeta}_{\tau_{m}},\ldots,\tilde{\zeta}_{2^{m}\tau_{m}%
}-\tilde{\zeta}_{(2^{m}-1)\tau_{m}}\big)\in \mathcal{H}^{m}$, for some
$\varphi \in C_{b,Lip}\big(\mathbb{R}^{d\times2^{m}}\big)$, define
\[
\mathbb{\tilde{E}}^{m}\big[\varphi \big(\tilde{\zeta}_{\tau_{m}},\ldots
,\tilde{\zeta}_{2^{m}\tau_{m}}-\tilde{\zeta}_{(2^{m}-1)\tau_{m}}%
\big)\big]=\varphi_{0},
\]
where $\varphi_{0}$ is defined iteratively through
\[%
\begin{array}
[c]{c}%
\displaystyle \varphi_{2^{m}-1}(x_{1},x_{2},\ldots,x_{2^{m}-1})=\mathbb{\tilde
{E}}^{m}\big[\varphi \big(x_{1},x_{2},\ldots,x_{2^{m}-1},\tilde{\zeta}%
_{2^{m}\tau_{m}}-\tilde{\zeta}_{(2^{m}-1)\tau_{m}}\big)\big]\\
\displaystyle \varphi_{2^{m}-2}(x_{1},x_{2},\ldots,x_{2^{m}-2})=\mathbb{\tilde
{E}}^{m}\big[\varphi_{2^{m}-1}\big(x_{1},x_{2},\ldots,x_{2^{m}-2},\tilde
{\zeta}_{(2^{m}-1)\tau_{m}}-\tilde{\zeta}_{(2^{m}-2)\tau_{m}}\big)\big]\\
\displaystyle \vdots \\
\displaystyle \varphi_{1}(x_{1})=\mathbb{\tilde{E}}^{m}\big[\varphi
_{2}\big(x_{1},\tilde{\zeta}_{2\tau_{m}}-\tilde{\zeta}_{\tau_{m}}\big)\big]\\
\displaystyle \varphi_{0}=\mathbb{\tilde{E}}^{m}\big[\varphi_{1}%
\big(\tilde{\zeta}_{\tau_{m}}\big)\big].
\end{array}
\]
For $\phi \in C_{b,Lip}(\mathbb{R}^{d})$, we also define
\[
\mathbb{\tilde{E}}^{0}\big[\phi \big(\tilde{\zeta}_{1}\big)\big]=\mathbb{\hat
{E}}_{1}\big[\phi(\zeta_{1})\big].
\]
From the above definition we know that $(\tilde{\Omega},\mathcal{H}%
^{m},\mathbb{\tilde{E}}^{m})$ is a sublinear expectation space under which
$\tilde{\zeta}_{t}-\tilde{\zeta}_{s}\overset{d}{=}$ $\tilde{\zeta}_{t-s}$ and
$\tilde{\zeta}_{t}-\tilde{\zeta}_{s}\perp \! \! \! \perp(\tilde{\zeta}_{t_{1}%
},\ldots,\tilde{\zeta}_{t_{i}})$ for each $t_{i},s,t\in \mathcal{D}%
_{m}[0,1]:=\{l2^{-m}:0\leq l\leq2^{m},l\in \mathbb{N}\}$ with $t_{i}\leq s\leq
t$. Also, it can be checked that $\mathbb{\tilde{E}}^{m}[\cdot]$ is
consistent, i.e., for each $m\geq0$, $\mathbb{\tilde{E}}^{m+1}[\cdot
]=\mathbb{\tilde{E}}^{m}[\cdot]$ on $\mathcal{H}^{m}$.

Next, we denote
\[
\mathcal{H}^{\infty}=\bigcup_{m\geq0}\mathcal{H}^{m}.
\]
Obviously, $\mathcal{H}^{\infty}\subset Lip(\tilde{\Omega})$. For any $\chi
\in \mathcal{H}^{\infty}$, there exists an $m_{0}\in \mathbb{N}$ such that
$\chi \in \mathcal{H}^{m_{0}}$, define
\[
\mathbb{\tilde{E}}[\chi]:=\mathbb{\tilde{E}}^{m_{0}}[\chi].
\]
Since $\mathbb{\tilde{E}}^{m}[\cdot]$\ is consistent, $\mathbb{\tilde{E}%
}:\mathcal{H}^{\infty}\rightarrow \mathbb{R}$ is a well-defined sublinear
expectation.

We shall extend the sublinear expectation $\mathbb{\tilde{E}}:\mathcal{H}%
^{\infty}\rightarrow \mathbb{R}$ to $\mathbb{\tilde{E}}:Lip(\tilde{\Omega
})\rightarrow \mathbb{R}$ and still use $\mathbb{\tilde{E}}$ for simplicity.
Denote $\mathcal{D}_{\infty}[0,1]:=\cup_{m\geq0}\mathcal{D}_{m}[0,1]$. For
each $\varphi(\tilde{\zeta}_{t_{1}},\ldots,\tilde{\zeta}_{t_{n}}-\tilde{\zeta
}_{t_{n-1}})\in Lip(\tilde{\Omega})$ with $\varphi \in C_{b,Lip}(\mathbb{R}%
^{d\times n})$, for each $t_{k}\in \lbrack0,1]$, $1\leq k\leq n$, we choose a
sequence $\{t_{k}^{i}\}_{i=1}^{\infty}\in \mathcal{D}_{\infty}[0,1]$ such that
$t_{k}^{i}<t_{k+1}^{i}$ and $t_{k}^{i}\downarrow t_{k}$ as $i\rightarrow
\infty$. Define
\[
\mathbb{\tilde{E}}\big[\varphi \big(\tilde{\zeta}_{t_{1}},\ldots,\tilde{\zeta
}_{t_{n}}-\tilde{\zeta}_{t_{n-1}}\big)\big]=\lim_{i\rightarrow \infty
}\mathbb{\tilde{E}}\big[\varphi \big(\tilde{\zeta}_{t_{1}^{i}},\ldots
,\tilde{\zeta}_{t_{n}^{i}}-\tilde{\zeta}_{t_{n-1}^{i}}\big)\big].
\]
It can be verified that the limit does not depend on the choice of
$\{t_{k}^{i}\}_{i=1}^{\infty}$. Indeed, for two descending sequences
$\{t_{k}^{i}\}_{i=1}^{\infty}$ and $\{t_{k}^{i^{\prime}}\}_{i^{\prime}%
=1}^{\infty}$ with the same limit $t_{k}$ as $i,i^{\prime}\rightarrow \infty$,
we assume that $t_{k}^{i}-t_{k}^{i^{\prime}}:=l_{k}\tau_{m_{k}}$ for some
$m_{k}\in \mathbb{N}$ and $1\leq l_{k}\leq2^{m_{k}}$. From the construction of
$\tilde{\zeta}_{\cdot}$ and Remark \ref{ramark i.i.d.}, there exists a
convergent sequence $\{ \bar{S}_{n_{j}^{\ast}}^{\tau_{m_{k}}}\}_{j=1}^{\infty
}$ such that for $\varphi \in C_{b,Lip}(\mathbb{R}^{d\times n})$
\[%
\begin{array}
[c]{l}%
\displaystyle \left \vert \mathbb{\tilde{E}}\big[\varphi \big(\tilde{\zeta
}_{t_{1}^{i}},\ldots,\tilde{\zeta}_{t_{n}^{i}}-\tilde{\zeta}_{t_{n-1}^{i}%
}\big)\big]-\mathbb{\tilde{E}}\big[\varphi \big(\tilde{\zeta}_{t_{1}%
^{i^{\prime}}},\ldots,\tilde{\zeta}_{t_{n}^{i^{\prime}}}-\tilde{\zeta
}_{t_{n-1}^{i^{\prime}}}\big)\big]\right \vert \\
\displaystyle \leq L_{\varphi}\mathbb{\tilde{E}}\big[\big(\sum_{k=1}%
^{n}|\tilde{\zeta}_{t_{k}^{i}}-\tilde{\zeta}_{t_{k-1}^{i}}-\tilde{\zeta
}_{t_{k}^{i^{\prime}}}+\tilde{\zeta}_{t_{k-1}^{i^{\prime}}}|\big)\wedge
N_{\varphi}\big]\\
\displaystyle \leq2L_{\varphi}\sum_{k=1}^{n}\mathbb{\tilde{E}}\big[|\tilde
{\zeta}_{t_{k}^{i}-t_{k}^{i^{\prime}}}|\wedge N_{\varphi}\big]\\
\displaystyle=2L_{\varphi}\sum \limits_{k=1}^{n}\mathbb{\hat{E}}_{1}%
\big[|\zeta_{\tau_{m_{k}}}^{1}+\cdots+\zeta_{\tau_{m_{k}}}^{l_{k}}|\wedge
N_{\varphi}\big]\\
\displaystyle=2L_{\varphi}\sum_{k=1}^{n}\lim \limits_{j\rightarrow \infty
}\mathbb{\hat{E}}\big[|\bar{S}_{l_{k}n_{j}^{\ast}}^{t_{k}^{i}-t_{k}%
^{i^{\prime}}}|\wedge N_{\varphi}\big]\\
\displaystyle=2L_{\varphi}\sum_{k=1}^{n}\lim \limits_{j\rightarrow \infty
}\mathbb{\hat{E}}\big[\big|(t_{k}^{i}-t_{k}^{i^{\prime}})^{1/\alpha}%
(l_{k}n_{j}^{\ast})^{-1/\alpha}S_{l_{k}n_{j}^{\ast}}\big|\wedge N_{\varphi
}\big]\\
\displaystyle \leq2L_{\varphi}\sum_{k=1}^{n}|t_{k}^{i}-t_{k}^{i^{\prime}%
}|^{\delta/\alpha}\lim \limits_{j\rightarrow \infty}\mathbb{\hat{E}}%
\big[|(l_{k}n_{j}^{\ast})^{-1/\alpha}S_{l_{k}n_{j}^{\ast}}|^{\delta
}\big]N_{\varphi}^{1-\delta}\\
\displaystyle \leq2L_{\varphi}M_{\delta}N_{\varphi}^{1-\delta}\sum_{k=1}%
^{n}|t_{k}^{i}-t_{k}^{i^{\prime}}|^{\delta/\alpha}\\
\displaystyle \rightarrow0\text{, \ as }i,i^{\prime}\rightarrow \infty,
\end{array}
\]
where $L_{\varphi}>0$ is the Lipschitz constant of $\varphi$, $K_{\varphi
}:=\left \Vert \varphi \right \Vert _{\infty}$ and $N_{\varphi}:=\frac
{2K_{\varphi}}{L_{\varphi}}$. Also, $\mathbb{\tilde{E}}:Lip(\tilde{\Omega
})\rightarrow \mathbb{R}$ is a well-defined sublinear expectation, that is, if
$\varphi(\tilde{\zeta}_{t_{1}},\ldots,$ $\tilde{\zeta}_{t_{n}}-\tilde{\zeta
}_{t_{n-1}})=\varphi^{\prime}(\tilde{\zeta}_{t_{1}},\ldots,\tilde{\zeta
}_{t_{n}}-\tilde{\zeta}_{t_{n-1}})$ with $\varphi,\varphi^{\prime}\in
C_{b,Lip}(\mathbb{R}^{d\times n})$, then
\[
\mathbb{\tilde{E}}\big[\varphi \big(\tilde{\zeta}_{t_{1}},\ldots,\tilde{\zeta
}_{t_{n}}-\tilde{\zeta}_{t_{n-1}}\big)\big]=\mathbb{\tilde{E}}\big[\varphi
^{\prime}\big(\tilde{\zeta}_{t_{1}},\ldots,\tilde{\zeta}_{t_{n}}-\tilde{\zeta
}_{t_{n-1}}\big)\big].
\]
Moreover, for each $t_{i},s,t\in \lbrack0,1]\ $with $t_{i}\leq s\leq t$,
$\tilde{\zeta}_{t}-\tilde{\zeta}_{s}\overset{d}{=}$ $\tilde{\zeta}_{t-s}$ and
$\tilde{\zeta}_{t}-\tilde{\zeta}_{s}\perp \! \! \! \perp(\tilde{\zeta}_{t_{1}%
},\ldots,\tilde{\zeta}_{t_{i}})$ under $\mathbb{\tilde{E}}$. Thus,
$(\tilde{\Omega},Lip(\tilde{\Omega}),\mathbb{\tilde{E}})$ is a sublinear
expectation space on which the canonical process $(\tilde{\zeta}_{t}%
)_{t\in \lbrack0,1]}$ is a nonlinear L\'{e}vy process.

Finally, note that {the distribution of $\tilde{\zeta}_{1}$ is uniquely
determined by $u(1,0)$, where $u$ is the unique viscosity solution of the
fully nonlinear PIDE (\ref{1.0}) (see Theorem \ref{unique viscosity theorem}%
).} We complete the proof by further claiming that
\[
\lim_{n\rightarrow \infty}\mathbb{\hat{E}}[\phi(\bar{S}_{n})]=\mathbb{\tilde
{E}}[\phi(\tilde{\zeta}_{1})]\text{, \ for\ }\phi \in C_{b,Lip}(\mathbb{R}%
^{d}).
\]
Suppose not. We can find a sequence $\{ \bar{S}_{\tilde{n}}\}_{\tilde{n}
=1}^{\infty}\subset \{ \bar{S}_{n}\}_{n=1}^{\infty}$ such that for any
subsequence of $\{ \bar{S}_{\tilde{n}}\}_{\tilde{n}=1}^{\infty}$ does not
converge in distribution to $\tilde{\zeta}_{1}$. However, using Lemma
\ref{tight} for $\{ \bar{S}_{\tilde{n}}\}_{\tilde{n}=1}^{\infty}$, we derive
from the above process that there exists a subsequence $\{ \bar{S}_{\tilde
{n}_{i}}\}_{i=1}^{\infty} \subset \{ \bar{S}_{\tilde{n}}\}_{\tilde{n}%
=1}^{\infty}$ which converges in distribution to $\tilde{\zeta}_{1}$, which
induces a contradiction. The proof is completed.
\end{proof}

\subsection{L\'{e}vy-Khintchine representation of nonlinear $\alpha$-stable
process}

\label{Section_Representation of Levy process}

For $\alpha \in(1,2)$, the L\'{e}vy-Khintchine representation for a nonlinear
$\alpha$-stable process has been established in \cite{HJLP2022}. However, the
L\'{e}vy-Khintchine representation for a nonlinear $\alpha$-stable process
with $\alpha \in(0,1]$ is still lacking. It turns out such a representation is
crucial for the corresponding robust limit theorem.

Recall that $F_{\mu}$ is the $\alpha$-stable L\'{e}vy measure given in
(\ref{F_mu}), and $\mathcal{L}$ is the set of $\alpha$-stable L\'{e}vy
measures on $\mathbb{R}^{d}$ satisfying (\ref{L_0}). Denote
\begin{equation}
\mathcal{K}_{\alpha}:=\left \{
\begin{array}
[c]{ll}%
\sup \limits_{F_{\mu}\in \mathcal{L}}\int_{\mathbb{R}^{d}}|\lambda|^{2}%
\wedge1F_{\mu}(d\lambda), & \alpha=1,\\
\sup \limits_{F_{\mu}\in \mathcal{L}}\int_{\mathbb{R}^{d}}|\lambda|\wedge
1F_{\mu}(d\lambda), & \alpha \in(0,1).
\end{array}
\right.  \label{F_mu condition 1}%
\end{equation}
It follows from the Sato-type result (see \cite[Remark 14.4]{Sato1999}) that
$\mathcal{K}_{\alpha}<\infty.$

\begin{lemma}
\label{Lipschitz}For each $\varphi \in C_{b}^{3}(\mathbb{R}^{d})$, we have for
$x,x^{\prime}\in \mathbb{R}^{d}$,
\[
\sup \limits_{F_{\mu}\in \mathcal{L}}\int_{\mathbb{R}^{d}}\left \vert
\delta_{\lambda}^{\alpha}\varphi(x^{\prime})-\delta_{\lambda}^{\alpha}
\varphi(x) \right \vert F_{\mu}(d\lambda)\leq C_{\alpha}|x^{\prime}-x|^{\delta
},
\]
where
\[
C_{\alpha}=\left \{
\begin{array}
[c]{ll}%
\mathcal{K}_{\alpha}\big(  4\left \Vert D\varphi \right \Vert _{\infty}^{\delta
}\left \Vert \varphi \right \Vert _{\infty}^{1-\delta}+2\left \Vert D^{3}
\varphi \right \Vert _{\infty}^{\delta}\left \Vert D^{2}\varphi \right \Vert
_{\infty}^{1-\delta}\big), & \alpha=1,\\
\mathcal{K}_{\alpha}\big(  4\left \Vert D\varphi \right \Vert _{\infty}^{\delta
}\left \Vert \varphi \right \Vert _{\infty}^{1-\delta}+2\left \Vert D^{2}
\varphi \right \Vert _{\infty}^{\delta}\left \Vert D\varphi \right \Vert _{\infty
}^{1-\delta}\big), & \alpha \in(0,1).
\end{array}
\right.
\]

\end{lemma}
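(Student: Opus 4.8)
The plan is to split the integral over the region near the origin, $\{|\lambda|\le 1\}$, and the region away from the origin, $\{|\lambda|>1\}$, and estimate $|\delta_\lambda^\alpha\varphi(x')-\delta_\lambda^\alpha\varphi(x)|$ separately on each, then recombine using the definition of $\mathcal{K}_\alpha$ in \eqref{F_mu condition 1}. The key point throughout is an \emph{interpolation} trick: for a quantity $A$ that we can bound both by $c_1$ (a zeroth-order bound, using boundedness of derivatives of $\varphi$) and by $c_2|x'-x|$ (a first-order bound, using the mean value theorem), we have $A\le A^\delta A^{1-\delta}\le (c_2|x'-x|)^\delta c_1^{1-\delta}$, which produces exactly the $|x'-x|^\delta$ dependence and the mixed norms $\|\cdot\|_\infty^\delta\|\cdot\|_\infty^{1-\delta}$ appearing in $C_\alpha$. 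So the whole proof is: find the two competing bounds on each piece, interpolate, integrate against $F_\mu$.

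Consider first $\alpha\in(0,1)$, where $\delta_\lambda^\alpha\varphi(x)=\varphi(x+\lambda)-\varphi(x)$. On $\{|\lambda|>1\}$, write $\delta_\lambda^\alpha\varphi(x')-\delta_\lambda^\alpha\varphi(x)=[\varphi(x'+\lambda)-\varphi(x+\lambda)]-[\varphi(x')-\varphi(x)]$; this is bounded by $4\|\varphi\|_\infty$ and also by $2\|D\varphi\|_\infty|x'-x|$, so by interpolation it is at most $4\|D\varphi\|_\infty^\delta\|\varphi\|_\infty^{1-\delta}|x'-x|^\delta$ (absorbing the factor $2^{1-\delta}\le 2$ and combining constants to the stated $4$). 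On $\{|\lambda|\le1\}$, I would instead use that $\delta_\lambda^\alpha\varphi(x')-\delta_\lambda^\alpha\varphi(x)=\int_0^1\langle D\varphi(x'+t\lambda)-D\varphi(x+t\lambda),\lambda\rangle\,dt$, which is bounded by $2\|D\varphi\|_\infty|\lambda|$ and by $\|D^2\varphi\|_\infty|\lambda|\,|x'-x|$; interpolation gives a bound $\le 2\|D^2\varphi\|_\infty^\delta\|D\varphi\|_\infty^{1-\delta}|\lambda|\,|x'-x|^\delta$. Since $|\lambda|=|\lambda|\wedge1$ on this region, integrating both pieces against $F_\mu(d\lambda)$ and taking the supremum over $\mathcal{L}$ yields a bound $\le \mathcal{K}_\alpha\big(4\|D\varphi\|_\infty^\delta\|\varphi\|_\infty^{1-\delta}+2\|D^2\varphi\|_\infty^\delta\|D\varphi\|_\infty^{1-\delta}\big)|x'-x|^\delta$, which is exactly $C_\alpha$ for $\alpha\in(0,1)$.

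For $\alpha=1$, the compensator $\langle D\varphi(x),\lambda\mathbbm{1}_{\{|\lambda|\le1\}}\rangle$ changes only the small-jump part. On $\{|\lambda|>1\}$, $\delta_\lambda^1\varphi=\delta_\lambda^\alpha\varphi$ as above, so the same interpolation gives the $4\|D\varphi\|_\infty^\delta\|\varphi\|_\infty^{1-\delta}$ term. On $\{|\lambda|\le1\}$ we now have $\delta_\lambda^1\varphi(x)=\varphi(x+\lambda)-\varphi(x)-\langle D\varphi(x),\lambda\rangle=\int_0^1\langle D\varphi(x+t\lambda)-D\varphi(x),\lambda\rangle\,dt$, and a second Taylor expansion gives $\delta_\lambda^1\varphi(x')-\delta_\lambda^1\varphi(x)=\int_0^1\int_0^1 t\,\langle(D^2\varphi(x'+st\lambda)-D^2\varphi(x+st\lambda))\lambda,\lambda\rangle\,ds\,dt$, bounded by $\|D^2\varphi\|_\infty|\lambda|^2$ and by $\tfrac12\|D^3\varphi\|_\infty|\lambda|^2|x'-x|$; interpolation gives $\le 2\|D^3\varphi\|_\infty^\delta\|D^2\varphi\|_\infty^{1-\delta}|\lambda|^2|x'-x|^\delta$ after absorbing constants. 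Since $|\lambda|^2=|\lambda|^2\wedge1$ on this region, integrating against $F_\mu$ and taking the supremum over $\mathcal{L}$ reproduces $C_\alpha$ for $\alpha=1$. The only mild obstacle is bookkeeping of the numerical constants so that they land on the exact values $4$ and $2$ in the statement — all the $2^{1-\delta}$ and $\tfrac12$ factors are $\le 2$, so they are safely absorbed — and making sure the split $\{|\lambda|\le1\}$ versus $\{|\lambda|>1\}$ is compatible with the definition of $\delta_\lambda^\alpha$ in each regime, which it is by construction.
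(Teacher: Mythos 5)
Your proof is correct and follows essentially the same route as the paper: the same split of the integral into $\{|\lambda|\le 1\}$ and $\{|\lambda|>1\}$, the same first- and second-order Taylor representations of $\delta_\lambda^\alpha\varphi$, and the same interpolation of a zeroth-order bound against a bound linear in $|x'-x|$ (the paper phrases it as $a\wedge b\le a^\delta b^{1-\delta}$, which is identical to your trick), followed by integration against $F_\mu$ and the definition of $\mathcal{K}_\alpha$. The constant bookkeeping also matches, so nothing further is needed.
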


\begin{proof}
When $\alpha \in(0,1)$, we have for $x\in \mathbb{R}^{d}$,
\[
\delta_{\lambda}^{\alpha}\varphi(x) =\varphi \left(  x+\lambda \right)
-\varphi(x) =\int_{0}^{1}\langle D\varphi(x+\theta \lambda),\lambda \rangle
d\theta.
\]
Then, it follows that for $x,x^{\prime}\in \mathbb{R}^{d}$,%
\begin{align*}
&  \int_{|\lambda|\leq1}\left \vert \delta_{\lambda}^{\alpha}\varphi(x^{\prime
})-\delta_{\lambda}^{\alpha}\varphi(x) \right \vert F_{\mu}(d\lambda)\\
&  =\int_{|\lambda|\leq1}\left \vert \int_{0}^{1}\langle D\varphi(x^{\prime
}+\theta \lambda)-D\varphi(x+\theta \lambda),\lambda \rangle d\theta \right \vert
F_{\mu}(d\lambda)\\
&  \leq \int_{|\lambda|\leq1}|\lambda|F_{\mu}(d\lambda)\left \Vert D^{2}
\varphi \right \Vert _{\infty}\left(  |x^{\prime}-x|\wedge \frac{2\left \Vert
D\varphi \right \Vert _{\infty}}{\left \Vert D^{2}\varphi \right \Vert _{\infty}
}\right) \\
&  \leq \int_{|\lambda|\leq1}|\lambda|F_{\mu}(d\lambda)\left \Vert D^{2}%
\varphi \right \Vert _{\infty}^{\delta}(2\left \Vert D\varphi \right \Vert
_{\infty})^{1-\delta}|x^{\prime}-x|^{\delta},
\end{align*}
and
\begin{align*}
&  \int_{|\lambda|>1}\left \vert \delta_{\lambda}^{\alpha}\varphi(x^{\prime
})-\delta_{\lambda}^{\alpha}\varphi(x) \right \vert F_{\mu}(d\lambda)\\
&  =\int_{|\lambda|>1}\left \vert \varphi(x^{\prime}+\lambda)-\varphi
(x+\lambda)-(\varphi(x^{\prime})-\varphi(x))\right \vert F_{\mu}(d\lambda)\\
&  \leq2\int_{|\lambda|>1}F_{\mu}(d\lambda)\left \Vert D\varphi \right \Vert
_{\infty}\left(  |x^{\prime}-x|\wedge \frac{2\left \Vert \varphi \right \Vert
_{\infty}}{\left \Vert D\varphi \right \Vert _{\infty}}\right) \\
&  \leq2\int_{|\lambda|>1}F_{\mu}(d\lambda)\left \Vert D\varphi \right \Vert
_{\infty}^{\delta}(2\left \Vert \varphi \right \Vert _{\infty})^{1-\delta
}|x^{\prime}-x|^{\delta}.
\end{align*}
When $\alpha=1$, we have for $x\in \mathbb{R}^{d}$,
\[
\delta_{\lambda}^{1}\varphi(x) =\int_{0}^{1}\int_{0}^{1}\langle D^{2}%
\varphi(x+\tau \theta \lambda)\lambda,\lambda \rangle \theta d\tau d\theta \text{,
\ for }|\lambda|\leq1,
\]
and $\delta_{\lambda}^{1}\varphi(x) =\varphi \left(  x+\lambda \right)
-\varphi(x) $, for $|\lambda|>1$. Similarly, we can deduce that
\begin{align*}
&  \int_{|\lambda|\leq1}\left \vert \delta_{\lambda}^{1}\varphi(x^{\prime
})-\delta_{\lambda}^{1}\varphi(x) \right \vert F_{\mu}(d\lambda)\\
&  =\int_{|\lambda|\leq1}\left \vert \int_{0}^{1}\int_{0}^{1}\langle
(D^{2}\varphi(x^{\prime}+\tau \theta \lambda)-D^{2}\varphi(x+\tau \theta
\lambda))\lambda,\lambda \rangle \theta d\tau d\theta \right \vert F_{\mu
}(d\lambda)\\
&  \leq \int_{|\lambda|\leq1}|\lambda|^{2}F_{\mu}(d\lambda)\left \Vert
D^{3}\varphi \right \Vert _{\infty}^{\delta}(2\left \Vert D^{2}\varphi \right \Vert
_{\infty})^{1-\delta}|x^{\prime}-x|^{\delta},
\end{align*}
and
\[
\int_{|\lambda|>1}\left \vert \delta_{\lambda}^{1}\varphi(x^{\prime}%
)-\delta_{\lambda}^{1}\varphi(x) \right \vert F_{\mu}(d\lambda)\leq
2\int_{|\lambda|>1}F_{\mu}(d\lambda)\left \Vert D\varphi \right \Vert _{\infty
}^{\delta}(2\left \Vert \varphi \right \Vert _{\infty})^{1-\delta}|x^{\prime
}-x|^{\delta}.
\]
The proof is completed.
\end{proof}

The following estimate is crucial to our main result.

\begin{theorem}
\label{recursive}Assume that (A1)-(A2) hold. Then, for $\varphi \in C_{b}%
^{3}(\mathbb{R}^{d})$ and $s\in \lbrack0,1]$,
\begin{equation}
\lim_{n\rightarrow \infty}\bigg \vert \mathbb{\hat{E}}\bigg[\varphi \left(
x+(s/n)^{\frac{1}{\alpha}}S_{n}\right)  \bigg]-\varphi(x)-s\sup \limits_{F_{\mu
}\in \mathcal{L}}\int_{\mathbb{R}^{d}}\delta_{\lambda}^{\alpha}\varphi
(x)F_{\mu}(d\lambda)\bigg \vert=o(s),\nonumber
\end{equation}
uniformly on $x\in \mathbb{R}^{d}$, where $o(s)/s\rightarrow0$ as
$s\rightarrow0$.
\end{theorem}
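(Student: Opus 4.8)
The plan is to derive the $n\to\infty$ limit from the one-step consistency condition (A2) by iterating it along the discrete scheme and controlling the accumulated errors using the moment bound (A1) together with the Lipschitz-type estimate of Lemma~\ref{Lipschitz}. Fix $s\in(0,1]$ and $\varphi\in C_b^3(\mathbb{R}^d)$. For each $n$, set the time step $h=s/n$ and define the backward scheme $v_n(kh,x)=\mathbb{\hat E}[\varphi(x+(h)^{1/\alpha}S_k)]$ for $k=0,1,\dots,n$, so that $v_n(0,x)=\varphi(x)$ and $v_n(s,x)=\mathbb{\hat E}[\varphi(x+(s/n)^{1/\alpha}S_n)]$; here I use that $S_k\overset{d}{=}S_{k-1}+Z_k$ with $Z_k$ an independent copy, so that the i.i.d. structure gives the one-step recursion $v_n(kh,x)=\mathbb{\hat E}\big[v_n((k-1)h,\,x+h^{1/\alpha}Z_1)\big]$. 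The target quantity is the telescoping sum $v_n(s,x)-\varphi(x)=\sum_{k=1}^n\big(v_n(kh,\cdot)-v_n((k-1)h,\cdot)\big)(x)$, which I want to compare to $s\sup_{F_\mu\in\mathcal{L}}\int\delta_\lambda^\alpha\varphi(x)F_\mu(d\lambda)$.

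The key step is to apply assumption (A2) with the scaling parameter $h=s/n$ to each one-step increment, but with $\varphi$ replaced by the function $w_{k-1}:=v_n((k-1)h,\cdot)$. This gives
\[
\Big|v_n(kh,x)-w_{k-1}(x)-h\sup_{F_\mu\in\mathcal{L}}\int_{\mathbb{R}^d}\delta_\lambda^\alpha w_{k-1}(x)F_\mu(d\lambda)\Big|\le h\,l(h),
\]
provided the $C_b^3$-norms of $w_{k-1}$ are controlled uniformly in $k$ and $n$ (this uniform-in-$k$ control of $l$ is the main technical point — one should check that (A2)'s bound $l(s)$ can be taken uniform over the relevant family, or re-run the proof of (A2)-type estimates with a norm-dependent constant, which is why $C_b^3$ regularity is assumed). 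Summing over $k$ produces a total error $\le n\cdot h\,l(h)=s\,l(s/n)\to0$. Next I must compare $\sum_{k=1}^n h\sup_{F_\mu}\int\delta_\lambda^\alpha w_{k-1}(x)F_\mu(d\lambda)$ with $s\sup_{F_\mu}\int\delta_\lambda^\alpha\varphi(x)F_\mu(d\lambda)$: using $|\sup a_i-\sup b_i|\le\sup|a_i-b_i|$ and Lemma~\ref{Lipschitz} applied to the difference $w_{k-1}-\varphi$ — or more precisely the bound $\sup_{F_\mu}\int|\delta_\lambda^\alpha w_{k-1}(x)-\delta_\lambda^\alpha\varphi(x)|F_\mu(d\lambda)\le C_\alpha\|w_{k-1}-\varphi\|_{\text{appropriate}}$ — this difference is $O(\|w_{k-1}-\varphi\|_\infty)$, and one shows $\|w_{k-1}-\varphi\|_\infty\le \|w_{k-1}-w_0\|_\infty\lesssim ((k-1)h)^{\delta/\alpha}+$ (correction terms), using (A1) to bound $\mathbb{\hat E}[|(k h)^{-1/\alpha}S_k|^\delta]\le M_\delta$ and hence $\|v_n(kh,\cdot)-\varphi\|_\infty\le L_\varphi\,\mathbb{\hat E}[(kh)^{1/\alpha}|S_k/(kh)^{1/\alpha}|\wedge N_\varphi]\lesssim (kh)^{\delta/\alpha}$, exactly as in the well-definedness estimate in the proof of Theorem~\ref{The construction of Levy process}. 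Thus the comparison error is $\le C\sum_{k=1}^n h\,((k-1)h)^{\delta/\alpha}\le C\int_0^s t^{\delta/\alpha}\,dt = C' s^{1+\delta/\alpha}=o(s)$.

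Combining the two estimates yields
\[
\Big|\mathbb{\hat E}\big[\varphi(x+(s/n)^{1/\alpha}S_n)\big]-\varphi(x)-s\sup_{F_\mu\in\mathcal{L}}\int_{\mathbb{R}^d}\delta_\lambda^\alpha\varphi(x)F_\mu(d\lambda)\Big|\le s\,l(s/n)+C' s^{1+\delta/\alpha}
\]
uniformly in $x$; letting $n\to\infty$ kills the first term, and since the remaining bound is $o(s)$ the theorem follows. The main obstacle I anticipate is making the application of (A2) to the iterated functions $w_{k-1}$ rigorous: one needs that the modulus $l$ in (A2), or an analogous estimate, holds uniformly over a family of test functions whose $C_b^3$-norms are bounded, and that these norms do not blow up along the scheme. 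Establishing a priori $C_b^3$ (or at least $C_b^1$ with the Lemma~\ref{Lipschitz} substitute) estimates on $v_n(kh,\cdot)$ that are uniform in $k,n$ — presumably via the regularity estimates quoted in the Appendix — is the crux; the rest is the telescoping-plus-Grönwall-type bookkeeping sketched above.
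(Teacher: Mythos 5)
There is a genuine gap, and it is exactly the point you flag as "the crux" but do not resolve: your telescoping scheme applies (A2) with $\varphi$ replaced by the iterated functions $w_{k-1}=v_n((k-1)h,\cdot)$, but (A2) is only available for a \emph{fixed} function in $C_b^3(\mathbb{R}^d)$, with a modulus $l$ that may depend on that function. The iterates $w_{k-1}$ are obtained by applying the one-step operator $\varphi\mapsto\mathbb{\hat E}[\varphi(\cdot+h^{1/\alpha}Z_1)]$, which preserves boundedness and the Lipschitz constant but \emph{not} $C^3$ regularity (a sublinear expectation is effectively a supremum of linear ones and destroys differentiability), so (A2) simply does not apply to $w_{k-1}$; and even granting smoothness, you would need $l$ to be uniform over the whole family $\{w_{k-1}\}_{k,n}$, which neither (A1)--(A2) nor the Appendix estimates provide. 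A second, related problem is your comparison step: the bound $\sup_{F_\mu}\int|\delta^\alpha_\lambda w_{k-1}(x)-\delta^\alpha_\lambda\varphi(x)|F_\mu(d\lambda)=O(\|w_{k-1}-\varphi\|_\infty)$ is false as stated, because $F_\mu$ has infinite mass near the origin; near $\lambda=0$ one needs gradient (and, for $\alpha=1$, second-derivative) control of the \emph{difference} $w_{k-1}-\varphi$, which again is unavailable for the non-smooth iterates. Lemma \ref{Lipschitz} does not help here: it gives Hölder-$\delta$ continuity in the spatial variable $x$ for a single fixed $C_b^3$ function, not a comparison between two different functions in sup norm.

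The paper's proof of Theorem \ref{recursive} avoids both issues by a different decomposition. Instead of updating the test function, it conditions on $(Z_1,\dots,Z_{n-1})$ so that the innermost expectation is always over a single increment $Z_n$ acting on the \emph{original} $\varphi$, evaluated at the randomly shifted point $y=x+(s/n)^{1/\alpha}\omega_{n-1}$. Since the bound $l$ in (A2) is uniform in the spatial argument, each step contributes error $\frac{s}{n}l(\frac{s}{n})$ with no uniformity-over-test-functions problem; the mismatch between $\epsilon(y)=\sup_{F_\mu}\int\delta^\alpha_\lambda\varphi(y)F_\mu(d\lambda)$ and $\epsilon(x)$ is then controlled by Lemma \ref{Lipschitz} (Hölder-$\delta$ in $x$ for the fixed smooth $\varphi$), giving a term $C(s/n)^{1+\delta/\alpha}|\omega_{n-1}|^\delta$ whose sublinear expectation is bounded via (A1) by $CM_\delta s^{1+\delta/\alpha}\frac{1}{n}(\frac{n-1}{n})^{\delta/\alpha}$. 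Iterating this one-step estimate (always with the same $\varphi$) and summing yields the bound $s\,l(s/n)+CM_\delta s^{1+\delta/\alpha}\frac{1}{n}\sum_{k=1}^{n-1}(k/n)^{\delta/\alpha}$, whence the $o(s)$ conclusion. If you want to salvage your approach you would have to either mollify the iterates and prove a quantitative, norm-dependent version of (A2), or restructure the conditioning as the paper does; as written, the argument does not go through.
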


\begin{proof}
Because $Z_{n}$ is independent from $Z_{1},\dots,Z_{n-1}$, we have
\begin{align*}
&  \mathbb{\hat{E}}\bigg[\varphi \left(  x+(s/n)^{\frac{1}{\alpha}}%
S_{n}\right)  \bigg]-\varphi(x)-s\epsilon(x)\\
&  =\mathbb{\hat{E}}\bigg[\left.  \mathbb{\hat{E}}\bigg[\varphi \left(
x+(s/n)^{\frac{1}{\alpha}}(\omega_{n-1}+Z_{n})\right)  \bigg]\right \vert
_{\substack{z_{1}=Z_{1}\\ \cdots \\z_{n-1}=Z_{n-1}}}\bigg]-s\epsilon
(x)-\varphi(x),
\end{align*}
where
\[
\omega_{n-1}:=\sum \limits_{k=1}^{n-1}z_{k}\  \  \  \text{and}\  \  \  \epsilon
(x):=\sup \limits_{F_{\mu}\in \mathcal{L}}\int_{\mathbb{R}^{d}}\delta_{\lambda
}^{\alpha}\varphi(x)F_{\mu}(d\lambda).
\]
Thanks to the assumptions (A1)-(A2) and Lemma \ref{Lipschitz}, we deduce that
\begin{align*}
&  \mathbb{\hat{E}}\bigg[\varphi \left(  x+(s/n)^{\frac{1}{\alpha}}%
(\omega_{n-1}+Z_{n})\right)  \bigg]\\
&  =\mathbb{\hat{E}}\bigg[\varphi \left(  x+(s/n)^{\frac{1}{\alpha}}%
(\omega_{n-1}+Z_{n})\right)  -\varphi \left(  x+(s/n)^{\frac{1}{\alpha}}%
\omega_{n-1}\right) \\
&  \text{ \  \ }-\frac{s}{n}\sup \limits_{F_{\mu}\in \mathcal{L}}\int
_{\mathbb{R}^{d}}\delta_{\lambda}^{\alpha}\varphi \left(  x+(s/n)^{\frac
{1}{\alpha}}\omega_{n-1}\right)  F_{\mu}(d\lambda)\bigg]\\
&  \text{ \  \ }+\frac{s}{n}\sup \limits_{F_{\mu}\in \mathcal{L}}\int
_{\mathbb{R}^{d}}\delta_{\lambda}^{\alpha}\varphi \left(  x+(s/n)^{\frac
{1}{\alpha}}\omega_{n-1}\right)  F_{\mu}(d\lambda)\\
&  \text{ \  \ }-\frac{s}{n}\sup \limits_{F_{\mu}\in \mathcal{L}}\int
_{\mathbb{R}^{d}}\delta_{\lambda}^{\alpha}\varphi \left(  x\right)  F_{\mu
}(d\lambda)+\varphi \left(  x+(s/n)^{\frac{1}{\alpha}}\omega_{n-1}\right)
+\frac{s}{n}\epsilon(x)\\
&  \leq \frac{s}{n}l\left(  \frac{s}{n}\right)  + C\left(  \frac{s}{n}\right)
^{1+\frac{\delta}{\alpha}}\left \vert \omega_{n-1}\right \vert ^{\delta}+
\varphi \left(  x+(s/n)^{\frac{1}{\alpha}}\omega_{n-1}\right)  +\frac{s}%
{n}\epsilon(x),
\end{align*}
which implies the following one-step estimate
\begin{align*}
&  \mathbb{\hat{E}}\left[  \varphi \left(  x+(s/n)^{\frac{1}{\alpha}}%
S_{n}\right)  \right] \\
&  \leq \mathbb{\hat{E}}\left[  \varphi \left(  x+(s/n)^{\frac{1}{\alpha}%
}S_{n-1}\right)  \right]  +\frac{s}{n}\epsilon(x)+\frac{s}{n}l\left(  \frac
{s}{n}\right)  +CM_{\delta}s^{1+\frac{\delta}{\alpha}}\frac{1}{n}\left(
\frac{n-1}{n}\right)  ^{\frac{\delta}{\alpha}}.
\end{align*}
Repeating the above process recursively, we obtain that
\[
\mathbb{\hat{E}}\left[  \varphi \left(  x+(s/n)^{\frac{1}{\alpha}}S_{n}\right)
\right]  \leq \varphi \left(  x\right)  +s\epsilon(x)+sl\left(  \frac{s}%
{n}\right)  +CM_{\delta}s^{1+\frac{\delta}{\alpha}}\frac{1}{n}\sum
\limits_{k=1}^{n-1}\left(  \frac{k}{n}\right)  ^{\frac{\delta}{\alpha}}.
\]
Analogously, we have
\[
\mathbb{\hat{E}}\left[  \varphi \left(  x+(s/n)^{\frac{1}{\alpha}}S_{n}%
^{3}\right)  \right]  \geq \varphi \left(  x\right)  +s\epsilon(x)-sl\left(
\frac{s}{n}\right)  -CM_{\delta}s^{1+\frac{\delta}{\alpha}}\frac{1}{n}%
\sum \limits_{k=1}^{n-1}\left(  \frac{k}{n}\right)  ^{\frac{\delta}{\alpha}}.
\]
Thus,
\[
\lim_{n\rightarrow \infty}\left \vert \mathbb{\hat{E}}\left[  \varphi \left(
x+(s/n)^{\frac{1}{\alpha}}S_{n}\right)  \right]  -\varphi(x)-s\epsilon
(x)\right \vert \leq CM_{\delta}s^{1+\frac{\delta}{\alpha}}\frac{\alpha}%
{\delta+\alpha},
\]
where we have used the fact that
\[
\lim_{n\rightarrow \infty}\frac{1}{n}\sum \limits_{k=1}^{n-1}\left(  \frac{k}%
{n}\right)  ^{\frac{\delta}{\alpha}}=\frac{\alpha}{\delta+\alpha}.
\]
This implies the desired result.
\end{proof}

Denote
\[
\mathfrak{F}=\left \{  \varphi \in C_{b}^{3}(\mathbb{R}^{d}):\varphi
(0)=0\right \}
\]
In the following, we shall present the characterization of $\lim
\limits_{s\rightarrow0}\mathbb{\tilde{E}}[\varphi(\tilde{\zeta}_{s})]s^{-1}$
for $\varphi \in \mathfrak{F}$, which can be regarded as a new type of
L\'{e}vy-Khintchine representation for the nonlinear pure jump L\'{e}vy
process $(\tilde{\zeta}_{t})_{t\in \lbrack0,1]}$. It will play an important
role in establishing the related PIDE in Section \ref{Section_connection PIDE}.

\begin{theorem}
\label{represent theorem}Assume that (A1)-(A2) hold. Then, for each
$\varphi \in \mathfrak{F}$,
\[
\lim \limits_{s\rightarrow0}\mathbb{\tilde{E}}[\varphi(\tilde{\zeta}%
_{s})]s^{-1}=\sup_{F_{\mu}\in \mathcal{L}}\left \{  \int_{\mathbb{R}^{d}}%
\delta_{\lambda}^{\alpha}\varphi(0)F_{\mu}(d\lambda)\right \}  .
\]

\end{theorem}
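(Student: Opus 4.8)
The plan is to reduce the statement to the one‑step estimate of Theorem~\ref{recursive}, after first upgrading Theorem~\ref{The construction of Levy process} to a Donsker‑type identity valid at every scale. Precisely, the key intermediate claim I would establish is that
\[
\mathbb{\tilde{E}}[\varphi(\tilde{\zeta}_{s})]=\lim_{n\rightarrow\infty}\mathbb{\hat{E}}\Big[\varphi\big((s/n)^{1/\alpha}S_{n}\big)\Big]\qquad\text{for all }s\in[0,1]\text{ and }\varphi\in C_{b,Lip}(\mathbb{R}^{d}).
\]
Granting this, I apply Theorem~\ref{recursive} at $x=0$: since $\varphi\in\mathfrak{F}\subset C_{b}^{3}(\mathbb{R}^{d})$ satisfies $\varphi(0)=0$, the quantitative bound obtained in the proof of Theorem~\ref{recursive} (via Lemma~\ref{Lipschitz}) yields
\[
\Big|\mathbb{\tilde{E}}[\varphi(\tilde{\zeta}_{s})]-s\sup_{F_{\mu}\in\mathcal{L}}\int_{\mathbb{R}^{d}}\delta_{\lambda}^{\alpha}\varphi(0)F_{\mu}(d\lambda)\Big|\leq C_{\alpha}M_{\delta}\frac{\alpha}{\alpha+\delta}\,s^{1+\delta/\alpha}.
\]
Dividing by $s$ and letting $s\rightarrow0$ then squeezes $\mathbb{\tilde{E}}[\varphi(\tilde{\zeta}_{s})]s^{-1}$ onto $\sup_{F_{\mu}\in\mathcal{L}}\int_{\mathbb{R}^{d}}\delta_{\lambda}^{\alpha}\varphi(0)F_{\mu}(d\lambda)$, which is the assertion.

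To prove the Donsker‑type identity I would work in three stages. (i) At a dyadic level $t=2^{-m}$, the construction in Section~\ref{Section_The construction of Levy process} realises $\tilde{\zeta}_{2^{-m}}$ as the distributional limit of $\bar{S}^{1/2^{m}}_{n^{m}_{i}/2^{m}}=(2^{-m})^{1/\alpha}\bar{S}_{n^{m}_{i}/2^{m}}$, where $\bar{S}_{k}=k^{-1/\alpha}S_{k}$; as $\{n^{m}_{i}/2^{m}\}_{i}$ is an increasing integer sequence and $\bar{S}_{k}\overset{\mathcal{D}}{\rightarrow}\tilde{\zeta}_{1}$ by Theorem~\ref{The construction of Levy process}, one gets $\tilde{\zeta}_{2^{-m}}\overset{d}{=}(2^{-m})^{1/\alpha}\tilde{\zeta}_{1}$, hence $\mathbb{\tilde{E}}[\varphi(\tilde{\zeta}_{2^{-m}})]=\lim_{n}\mathbb{\hat{E}}[\varphi((2^{-m})^{1/\alpha}\bar{S}_{n})]$. (ii) For a general dyadic $t=l2^{-m}$, the stationarity and independence of increments of $(\tilde{\zeta}_{t})$ give $\tilde{\zeta}_{t}\overset{d}{=}\sum_{k=1}^{l}\eta_{k}$ with $(\eta_{k})$ an independent‑copy sequence of $\tilde{\zeta}_{2^{-m}}\overset{d}{=}(2^{-m})^{1/\alpha}\tilde{\zeta}_{1}$; applying Remark~\ref{ramark i.i.d.} to $X_{n}:=(2^{-m})^{1/\alpha}\bar{S}_{n}\overset{\mathcal{D}}{\rightarrow}(2^{-m})^{1/\alpha}\tilde{\zeta}_{1}$ and noting that $\sum_{k=1}^{l}X_{n}^{(k)}\overset{d}{=}(2^{-m})^{1/\alpha}n^{-1/\alpha}S_{ln}=t^{1/\alpha}\bar{S}_{ln}$, one obtains that $\tilde{\zeta}_{t}$ is the distributional limit of $\bar{S}_{ln}^{t}:=t^{1/\alpha}\bar{S}_{ln}$; since $\bar{S}_{N}^{t}=t^{1/\alpha}\bar{S}_{N}$ and $\bar{S}_{N}\overset{\mathcal{D}}{\rightarrow}\tilde{\zeta}_{1}$, the full limit $\lim_{N}\mathbb{\hat{E}}[\varphi(\bar{S}_{N}^{t})]$ exists and so coincides with the limit along $N=ln$, which gives the identity for every dyadic $t$. (iii) For arbitrary $s\in(0,1]$, pick dyadic $s^{i}\downarrow s$; the extension of $\mathbb{\tilde{E}}$ to $Lip(\tilde{\Omega})$ built in Section~\ref{Section_The construction of Levy process} gives $\mathbb{\tilde{E}}[\varphi(\tilde{\zeta}_{s})]=\lim_{i}\mathbb{\tilde{E}}[\varphi(\tilde{\zeta}_{s^{i}})]$, while the elementary estimate $|\varphi(a)-\varphi(b)|\leq(2\|\varphi\|_{\infty})^{1-\delta}L_{\varphi}^{\delta}|a-b|^{\delta}$ combined with (A1) gives
\[
\Big|\mathbb{\hat{E}}\big[\varphi((s^{i}/n)^{1/\alpha}S_{n})\big]-\mathbb{\hat{E}}\big[\varphi((s/n)^{1/\alpha}S_{n})\big]\Big|\leq(2\|\varphi\|_{\infty})^{1-\delta}L_{\varphi}^{\delta}\,\big|(s^{i})^{1/\alpha}-s^{1/\alpha}\big|^{\delta}M_{\delta}\rightarrow0
\]
uniformly in $n$; a standard uniform‑convergence argument then lets me interchange the limits in $n$ and $i$ and conclude the identity at $s$.

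The main obstacle is precisely this Donsker‑type identity, and within it the passage to non‑dyadic scales: the sublinear expectation $\mathbb{\tilde{E}}$ is constructed in Section~\ref{Section_The construction of Levy process} only along dyadic times and through subsequences, so one has to route through the extension procedure and, crucially, keep the approximation error $o(s)$ \emph{uniformly in the scale}, which is exactly where the $\delta$‑H\"older bound above and the $\delta$‑moment condition (A1) enter. Once the identity is secured, plugging into Theorem~\ref{recursive} at $x=0$ and squeezing as $s\rightarrow0$ is routine, and I anticipate no further difficulty there.
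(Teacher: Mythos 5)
Your overall reduction (represent $\mathbb{\tilde{E}}[\varphi(\tilde{\zeta}_{s})]$ as a limit of $\mathbb{\hat{E}}[\varphi((s/n)^{1/\alpha}S_{n})]$, invoke the quantitative estimate of Theorem~\ref{recursive} at $x=0$, then squeeze) is exactly the paper's strategy, and your stage~(iii) H\"older-in-scale estimate is correct. The genuine gap is in how you justify the intermediate ``Donsker-type identity'': stages (i) and (ii) rest on the \emph{full-sequence} convergence $\bar{S}_{n}\overset{\mathcal{D}}{\rightarrow}\tilde{\zeta}_{1}$, which you use both to identify $\tilde{\zeta}_{2^{-m}}\overset{d}{=}(2^{-m})^{1/\alpha}\tilde{\zeta}_{1}$ and to pass from the lattice $N=ln$ to all $N$. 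But in the paper that full-sequence convergence is precisely the last claim in the proof of Theorem~\ref{The construction of Levy process}, and it is established there only via the uniqueness of the viscosity solution (Theorem~\ref{unique viscosity theorem}), which in turn relies on Theorem~\ref{represent theorem} --- the statement you are proving. At this point of the development the construction only provides convergence along the particular nested subsequences and at dyadic times; without the PIDE characterization you cannot yet rule out that different subsequences produce different limits, nor assert the scaling identity $\tilde{\zeta}_{2^{-m}}\overset{d}{=}(2^{-m})^{1/\alpha}\tilde{\zeta}_{1}$ (the scaling property is proved in the paper only \emph{after} Theorem~\ref{unique viscosity theorem}). So, as written, your argument is circular.

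The repair is cheap and turns your proof into the paper's: you do not need the full-sequence identity. The construction and extension of $\mathbb{\tilde{E}}$ already give, for every $s\in[0,1]$, dyadic $s_{k}\downarrow s$ and subsequences $\{n_{i}^{\ast}\}$ with $\mathbb{\tilde{E}}[\varphi(\tilde{\zeta}_{s})]=\lim_{k}\lim_{i}\mathbb{\hat{E}}\big[\varphi\big((s_{k}/n_{i}^{\ast})^{1/\alpha}S_{n_{i}^{\ast}}\big)\big]$. Since the estimate in Theorem~\ref{recursive} is a bound on the $\limsup$ over all $n$, uniform in $x$, it applies along any subsequence, giving $\big|\lim_{i}\mathbb{\hat{E}}[\varphi((s_{k}/n_{i}^{\ast})^{1/\alpha}S_{n_{i}^{\ast}})]-\varphi(0)-s_{k}\sup_{F_{\mu}\in\mathcal{L}}\int\delta_{\lambda}^{\alpha}\varphi(0)F_{\mu}(d\lambda)\big|\leq C_{\alpha}M_{\delta}\tfrac{\alpha}{\alpha+\delta}s_{k}^{1+\delta/\alpha}$; letting $k\rightarrow\infty$ (your uniform-in-$n$ estimate, or just continuity in the scale variable) yields the $o(s)$ bound at $s$, and the squeeze concludes. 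Stages (i)--(ii) should therefore be dropped in favour of this subsequence representation.
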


\begin{proof}
For each $\varphi \in \mathfrak{F}$ and $s\in \lbrack0,1]$,\ Theorem
\ref{The construction of Levy process} shows that there exists a sequence
$\{s_{k}\}_{k=1}^{\infty}\subset \mathcal{D}_{\infty}[0,1]$ satisfying
$s_{k}\downarrow s$ as $k\rightarrow \infty$ and a convergent sequence
$\big \{(s_{k}/n_{i}^{\ast})^{\frac{1}{\alpha}}S_{n_{i}^{\ast}}\big \}_{i=1}%
^{\infty}$ for each $s_{k}$, such that
\[
\mathbb{\tilde{E}}[\varphi(\tilde{\zeta}_{s})]=\lim_{k\rightarrow \infty}%
\lim_{i\rightarrow \infty}\mathbb{\hat{E}}\big[\varphi((s_{k}/n_{i}^{\ast
})^{\frac{1}{\alpha}}S_{n_{i}^{\ast}})\big].
\]
Under the assumption (A2), it follows from Theorem \ref{recursive} that
\begin{align}
&  \bigg \vert \mathbb{\tilde{E}}[\varphi(\tilde{\zeta}_{s})]-\varphi
(0)-s\sup \limits_{F_{\mu}\in \mathcal{L}}\int_{\mathbb{R}^{d}}\delta_{\lambda
}^{\alpha}\varphi(0)F_{\mu}(d\lambda)\bigg \vert \label{Z condition}\\
&  \leq \lim_{k\rightarrow \infty}\bigg \vert \lim_{i\rightarrow \infty
}\mathbb{\hat{E}}\big[\varphi((s_{k}/n_{i}^{\ast})^{\frac{1}{\alpha}}%
S_{n_{i}^{\ast}})\big]-\varphi(0)-s_{k}\sup \limits_{F_{\mu}\in \mathcal{L}}%
\int_{\mathbb{R}^{d}}\delta_{\lambda}^{\alpha}\varphi(0)F_{\mu}(d\lambda
)\bigg \vert=o(s),\nonumber
\end{align}
which implies that
\begin{align*}
\lim \limits_{s\rightarrow0}\mathbb{\tilde{E}}[\varphi(\tilde{\zeta}%
_{s})]s^{-1}  &  =\lim_{s\rightarrow0}\bigg(\mathbb{\tilde{E}}[\varphi
(\tilde{\zeta}_{s})]-\varphi(0)-s\sup \limits_{F_{\mu}\in \mathcal{L}}%
\int_{\mathbb{R}^{d}}\delta_{\lambda}^{\alpha}\varphi(0)F_{\mu}(d\lambda
)\bigg)s^{-1}+\sup \limits_{F_{\mu}\in \mathcal{L}}\int_{\mathbb{R}^{d}}%
\delta_{\lambda}^{\alpha}\varphi(0)F_{\mu}(d\lambda)\\
&  \leq \sup \limits_{F_{\mu}\in \mathcal{L}}\int_{\mathbb{R}^{d}}\delta
_{\lambda}^{\alpha}\varphi(0)F_{\mu}(d\lambda),
\end{align*}
and similarly,
\[
\lim \limits_{s\rightarrow0}\mathbb{\tilde{E}}[\varphi(\tilde{\zeta}%
_{s})]s^{-1}\geq \sup \limits_{F_{\mu}\in \mathcal{L}}\int_{\mathbb{R}^{d}}%
\delta_{\lambda}^{\alpha}\varphi(0)F_{\mu}(d\lambda).
\]
Therefore, by the\ squeeze theorem, we complete the proof.
\end{proof}

\subsection{Connection to PIDE}

\label{Section_connection PIDE}

In this section, we relate the nonlinear pure jump L\'{e}vy process
$(\tilde{\zeta}_{t})_{t\in \lbrack0,1]}$ to the fully nonlinear PIDE
(\ref{PIDE}). Let $C_{b}^{2,3}([0,1]\times \mathbb{R}^{d})$ denote the set of
functions on $[0,1]\times \mathbb{R}^{d}$ having bounded continuous partial
derivatives up to the second order in $t$ and third order in $x$,
respectively. Now we give the definition of viscosity solution for PIDE
(\ref{PIDE}).

\begin{definition}
A bounded upper semicontinuous (resp. lower semicontinuous) function $u$ on
$[0,1]\times \mathbb{R}^{d}$ is called a viscosity subsolution (resp. viscosity
supersolution) of (\ref{PIDE}) if $u(0,\cdot)\leq \phi(\cdot)$ $($resp.
$\geq \phi(\cdot))$ and for each $(t,x)\in(0,1]\times \mathbb{R}^{d}$,
\[
\partial_{t}\psi(t,x)-\sup \limits_{F_{\mu}\in \mathcal{L}}\left \{
\int_{\mathbb{R}^{d}}\delta_{\lambda}^{\alpha}\psi(t,x)F_{\mu}(d\lambda
)\right \}  \leq0\text{ }(\text{resp. }\geq0)
\]
whenever $\psi \in C_{b}^{2,3}((0,1]\times \mathbb{R}^{d})$ is such that
$\psi \geq u$ (resp. $\psi \leq u$) and $\psi(t,x)=u(t,x)$. A bounded continuous
function $u$ is a viscosity solution of (\ref{PIDE}) if it is both a viscosity
subsolution and supersolution.
\end{definition}

For each $\phi \in C_{b,Lip}(\mathbb{R}^{d})$, define
\begin{equation}
u(t,x)=\mathbb{\tilde{E}}[\phi(x+\tilde{\zeta}_{t})],\text{ }(t,x)\in
\lbrack0,1]\times \mathbb{R}^{d}. \label{u}%
\end{equation}

\begin{theorem}
\label{unique viscosity theorem}Suppose that the assumptions (A1)-(A2) hold.
Then, the value function $u$ of (\ref{u}) is the unique viscosity solution of
the fully nonlinear PIDE (\ref{PIDE}), i.e.,
\begin{equation}
\left \{
\begin{array}
[c]{l}%
\displaystyle \partial_{t}u(t,x)-\sup \limits_{F_{\mu}\in \mathcal{L}}\left \{
\int_{\mathbb{R}^{d}}\delta_{\lambda}^{\alpha}u(t,x)F_{\mu}(d\lambda)\right \}
=0,\\
\displaystyle u(0,x)=\phi(x),\text{\  \  \ }\forall(t,x)\in \lbrack
0,1]\times \mathbb{R}^{d},
\end{array}
\right.  \label{1.0}%
\end{equation}
where
\[
\delta_{\lambda}^{\alpha}u(t,x)=\left \{
\begin{array}
[c]{ll}%
u(t,x+\lambda)-u(t,x)-\langle D_{x}u(t,x),\lambda \mathbbm{1}_{\{|\lambda
|\leq1\}}\rangle, & \alpha=1,\\
u(t,x+\lambda)-u(t,x), & \alpha \in(0,1).
\end{array}
\right.
\]

\end{theorem}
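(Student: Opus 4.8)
The plan is to record the regularity of $u$, prove a dynamic programming principle, combine it with the L\'evy--Khintchine representation of Theorem \ref{represent theorem} to show that $u$ solves (\ref{1.0}) in the viscosity sense, and finally deduce uniqueness from a comparison principle. I would start by observing that $u$ in (\ref{u}) is bounded by $\left\Vert \phi\right\Vert_{\infty}$ and Lipschitz in $x$ with the Lipschitz constant of $\phi$, uniformly in $t$, by monotonicity and sub-additivity of $\mathbb{\tilde{E}}$; in particular $u(t,\cdot)\in C_{b,Lip}(\mathbb{R}^{d})$ for every $t$.

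Next I would establish the dynamic programming principle
\[
u(t+s,x)=\mathbb{\tilde{E}}\big[u(t,x+\tilde{\zeta}_{s})\big],\qquad s,t\geq0,\ s+t\leq1,\ x\in\mathbb{R}^{d}.
\]
This follows from the decomposition $\tilde{\zeta}_{t+s}=\tilde{\zeta}_{s}+(\tilde{\zeta}_{t+s}-\tilde{\zeta}_{s})$, the facts $\tilde{\zeta}_{t+s}-\tilde{\zeta}_{s}\overset{d}{=}\tilde{\zeta}_{t}$ and $\tilde{\zeta}_{t+s}-\tilde{\zeta}_{s}$ independent from $\tilde{\zeta}_{s}$ under $\mathbb{\tilde{E}}$ (both from Theorem \ref{The construction of Levy process}), and the definition of independence applied to $\varphi(a,b)=\phi(x+a+b)$, noting that $a\mapsto\mathbb{\tilde{E}}[\phi(x+a+\tilde{\zeta}_{t})]=u(t,x+a)$ is admissible because it lies in $C_{b,Lip}(\mathbb{R}^{d})$. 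Since the $\delta$-moment bound (A1) passes to the weak limit in Theorem \ref{The construction of Levy process}, one has $\mathbb{\tilde{E}}[|\tilde{\zeta}_{s}|^{\delta}]\leq M_{\delta}s^{\delta/\alpha}$; combining this with $1\wedge r\leq r^{\delta}$ and $|u(t,y)-u(t,x)|\leq L_{\phi}\big(|y-x|\wedge(2\left\Vert \phi\right\Vert_{\infty}/L_{\phi})\big)$ yields $|u(t+s,x)-u(t,x)|\leq C_{\phi}s^{\delta/\alpha}$, so $u\in C_{b}([0,1]\times\mathbb{R}^{d})$.

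Then I would verify the viscosity solution property. The initial condition is exact, $u(0,x)=\mathbb{\tilde{E}}[\phi(x+\tilde{\zeta}_{0})]=\phi(x)$. For the subsolution inequality, fix $(t,x)\in(0,1]\times\mathbb{R}^{d}$ and $\psi\in C_{b}^{2,3}((0,1]\times\mathbb{R}^{d})$ with $\psi\geq u$ and $\psi(t,x)=u(t,x)$. For $0<s<t$, the dynamic programming principle and $\psi\geq u$ give
\[
0\leq\mathbb{\tilde{E}}\big[\psi(t-s,x+\tilde{\zeta}_{s})\big]-\psi(t,x)=\mathbb{\tilde{E}}\big[\varphi_{s}(\tilde{\zeta}_{s})\big]+\big(\psi(t-s,x)-\psi(t,x)\big),
\]
where $\varphi_{s}(\lambda):=\psi(t-s,x+\lambda)-\psi(t-s,x)\in\mathfrak{F}$. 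Writing $\varphi_{0}(\lambda):=\psi(t,x+\lambda)-\psi(t,x)\in\mathfrak{F}$, smoothness of $\psi$ gives $|\varphi_{s}(\lambda)-\varphi_{0}(\lambda)|\leq C_{\psi}s(1\wedge|\lambda|)$, hence $\mathbb{\tilde{E}}[|\varphi_{s}(\tilde{\zeta}_{s})-\varphi_{0}(\tilde{\zeta}_{s})|]\leq C_{\psi}s\,\mathbb{\tilde{E}}[|\tilde{\zeta}_{s}|^{\delta}]=o(s)$. Since $\delta_{\lambda}^{\alpha}\varphi_{0}(0)=\delta_{\lambda}^{\alpha}\psi(t,x)$, Theorem \ref{represent theorem} applied to $\varphi_{0}$ yields $\mathbb{\tilde{E}}[\varphi_{s}(\tilde{\zeta}_{s})]/s\to\sup_{F_{\mu}\in\mathcal{L}}\int_{\mathbb{R}^{d}}\delta_{\lambda}^{\alpha}\psi(t,x)F_{\mu}(d\lambda)$ as $s\to0^{+}$. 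Dividing the displayed inequality by $s$ and letting $s\to0^{+}$ gives $\partial_{t}\psi(t,x)-\sup_{F_{\mu}\in\mathcal{L}}\int_{\mathbb{R}^{d}}\delta_{\lambda}^{\alpha}\psi(t,x)F_{\mu}(d\lambda)\leq0$; the supersolution inequality follows symmetrically with $\psi\leq u$, so $u$ is a viscosity solution of (\ref{1.0}).

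Finally, uniqueness follows from a comparison principle for (\ref{1.0}): a bounded upper semicontinuous subsolution lies below a bounded lower semicontinuous supersolution whenever their initial data are ordered, so the continuous function $u$ is simultaneously the maximal subsolution and the minimal supersolution. I would prove this comparison by the doubling-of-variables method adapted to integro-differential operators (the Jensen--Ishii lemma in the nonlocal setting), splitting each nonlocal term at scale $|\lambda|\leq\rho$ so that the small-jump part is absorbed by the test function and the large-jump part is controlled by the sub/supersolution; the estimate of Lemma \ref{Lipschitz} together with $\mathcal{K}_{\alpha}<\infty$ makes these integrals finite, and the supremum over $\mathcal{L}$ is harmless because it is a supremum of linear nonlocal operators each meeting the required structural conditions, there being no local second-order term. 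I expect this comparison principle to be the main obstacle: the probabilistic construction and Theorem \ref{represent theorem} already deliver existence, so the genuinely delicate point is pushing the doubling argument through for $\alpha\in(0,1]$, in particular keeping track of the compensator $\mathbbm{1}_{\{|\lambda|\leq1\}}$ when $\alpha=1$ and making all estimates uniform over the uncertainty set $\mathcal{L}$.
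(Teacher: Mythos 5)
Your existence argument is essentially the paper's: the same dynamic programming identity $u(t+s,x)=\mathbb{\tilde{E}}[u(t,x+\tilde{\zeta}_{s})]$, the same regularity estimates (Lipschitz in $x$, $\delta/\alpha$-H\"older-type continuity in $t$ from (A1)), and the same mechanism of plugging a test function into the DPP, Taylor-expanding in time with an $o(s)$ error, and invoking Theorem \ref{represent theorem} for the function $\lambda\mapsto\psi(t,x+\lambda)-\psi(t,x)\in\mathfrak{F}$. Two remarks on differences. First, for uniqueness the paper does not prove a comparison principle at all; it simply cites Corollary 55 of \cite{HP2021}, whereas you propose to establish comparison from scratch by a nonlocal doubling-of-variables argument. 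Your sketch is the standard and plausible route (the L\'evy measures in $\mathcal{L}$ are $x$-independent with $\mathcal{K}_{\alpha}<\infty$ uniformly, and there is no second-order local term), but as written it is only a plan, and carrying it out is genuinely more work than the paper undertakes; if you want a complete proof you should either supply that argument in detail or, as the paper does, appeal to the existing uniqueness result for this class of PIDEs. Second, a small technical point: $|\tilde{\zeta}_{s}|^{\delta}$ is not an element of $Lip(\tilde{\Omega})$, so the inequality $\mathbb{\tilde{E}}[|\tilde{\zeta}_{s}|^{\delta}]\leq M_{\delta}s^{\delta/\alpha}$ is not literally meaningful; the paper avoids this by estimating the truncated quantity $\mathbb{\tilde{E}}[|\tilde{\zeta}_{s}|\wedge N_{\phi}]$ through the approximating sums $(s_{k}/n_{i}^{\ast})^{1/\alpha}S_{n_{i}^{\ast}}$ and only then using (A1). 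Your use of the truncated modulus $|y-x|\wedge(2\Vert\phi\Vert_{\infty}/L_{\phi})$ shows you only need the truncated bound, so the argument survives, but the limit-passage through the approximating sequence should be made explicit rather than asserting a moment bound for $\tilde{\zeta}_{s}$ itself.
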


\begin{proof}
We first show that $u$ is continuous. It is clear that $u(t,\cdot)$ is
uniformly Lipschitz continuous with the same Lipschitz constant as for $\phi$.
For each $t,s\in \lbrack0,1]$ such that $t+s\leq1$, we obtain
\begin{equation}
u(t+s,x)=\mathbb{\tilde{E}}[u(t,x+\tilde{\zeta}_{s})],\text{ \ }x\in
\mathbb{R}^{d}. \label{DPP}%
\end{equation}
Theorem \ref{The construction of Levy process} shows that there exists a
sequence $\{s_{k}\}_{k=1}^{\infty}\subset \mathcal{D}_{\infty}[0,1]$ satisfying
$s_{k}\downarrow s$ as $k\rightarrow \infty$ and a convergent sequence
$\big \{(s_{k}/n_{i}^{\ast})^{\frac{1}{\alpha}}S_{n_{i}^{\ast}}\big \}_{i=1}%
^{\infty}$ for each $s_{k}$, such that%
\begin{align}
\left \vert \mathbb{\tilde{E}}\big[u(t,x+\tilde{\zeta}_{s}%
)\big]-u(t,x)\right \vert  &  \leq L_{\phi}\mathbb{\tilde{E}}\big[|\tilde
{\zeta}_{s}|\wedge N_{\phi}\big]\nonumber \\
&  =L_{\phi}\lim \limits_{k\rightarrow \infty}\lim \limits_{i\rightarrow \infty
}\mathbb{\hat{E}}\big[|(s_{k}/n_{i}^{\ast})^{\frac{1}{\alpha}}S_{n_{i}^{\ast}%
}|\wedge N_{\phi}\big]\label{zata estimate}\\
&  \leq L_{\phi}s^{\frac{\delta}{\alpha}}\sup_{i}\mathbb{\hat{E}%
}\big[|(1/n_{i}^{\ast})^{\frac{1}{\alpha}}S_{n_{i}^{\ast}}|^{\delta
}\big]N_{\phi}^{1-\delta},\nonumber
\end{align}
where $L_{\phi}>0$ is the Lipschitz constant of $\phi$ and $N_{\phi}%
:=\frac{2\left \Vert \phi \right \Vert _{\infty}}{L_{\phi}}$. This implies the
continuity of $u(\cdot,x)$
\[
|u(t+s,x)-u(t,x)|\leq L_{\phi}M_{\delta}N_{\phi}^{1-\delta}s^{\frac{\delta
}{\alpha}}.
\]

Next, we will prove that $u$ is the unique viscosity solution of (\ref{1.0}).
The uniqueness of viscosity solution can be found in Corollary 55 in
\cite{HP2021}. It suffices to prove that $u$ is a viscosity subsolution, and
the other case can be proved in a similar way. Assume that $\psi$ is a smooth
test function on $(0,1]\times \mathbb{R}^{d}$ satisfying $\psi \geq u$ and
$\psi(\bar{t},\bar{x})=u(\bar{t},\bar{x})$ for some point $(\bar{t},\bar
{x})\in(0,1]\times \mathbb{R}^{d}$. For each $s\in(0,\bar{t})$, the dynamic
programming principle (\ref{DPP}) shows that
\begin{equation}
0=\mathbb{\tilde{E}}[u(\bar{t}-s,\bar{x}+\tilde{\zeta}_{s})-u(\bar{t},\bar
{x})]\leq \mathbb{\tilde{E}}[\psi(\bar{t}-s,\bar{x}+\tilde{\zeta}_{s}%
)-\psi(\bar{t},\bar{x})]. \label{1.1}%
\end{equation}
Using Taylor's expansion, we obtain that%
\begin{align}
&  \psi(\bar{t}-s,\bar{x}+\tilde{\zeta}_{s})-\psi(\bar{t},\bar{x})\nonumber \\
&  =\psi(\bar{t}-s,\bar{x}+\tilde{\zeta}_{s})-\psi(\bar{t},\bar{x}%
+\tilde{\zeta}_{s})+\psi(\bar{t},\bar{x}+\tilde{\zeta}_{s})-\psi(\bar{t}%
,\bar{x})\label{1.2}\\
&  =-\partial_{t}\psi(\bar{t},\bar{x})s+\psi(\bar{t},\bar{x}+\tilde{\zeta}%
_{s})-\psi(\bar{t},\bar{x})+\epsilon.\nonumber
\end{align}
where%
\[
\epsilon=s\int_{0}^{1}[-\partial_{t}\psi(\bar{t}-\theta s,\bar{x}+\tilde
{\zeta}_{s})+\partial_{t}\psi(\bar{t},\bar{x})]d\theta.
\]
Since $\psi$ is the smooth function, similar to (\ref{zata estimate}), one
easily gets $\mathbb{\tilde{E}}[|\epsilon|]\leq C_{\psi}s^{1+\frac{\delta
}{\alpha}}$. Then, it follows from (\ref{1.1})-(\ref{1.2})\ that
\begin{equation}
0\leq \lim_{s\rightarrow0}\mathbb{\tilde{E}}[\psi(\bar{t}-s,\bar{x}%
+\tilde{\zeta}_{s})-\psi(\bar{t},\bar{x})]s^{-1}=-\partial_{t}\psi(\bar
{t},\bar{x})+\lim_{s\rightarrow0}\mathbb{\tilde{E}}[\psi(\bar{t},\bar
{x}+\tilde{\zeta}_{s})-\psi(\bar{t},\bar{x})]s^{-1}. \label{1.3}%
\end{equation}
Moreover, Theorem \ref{represent theorem} implies that
\begin{equation}
\lim_{s\rightarrow0}\mathbb{\tilde{E}}\big[\psi(\bar{t},\bar{x}+\tilde{\zeta
}_{s})-\psi(\bar{t},\bar{x})\big]s^{-1}=\sup \limits_{F_{\mu}\in \mathcal{L}%
}\left \{  \int_{\mathbb{R}^{d}}\delta_{\lambda}^{\alpha}\psi(\bar{t},\bar
{x})F_{\mu}(d\lambda)\right \}  . \label{1.4}%
\end{equation}
Combining (\ref{1.3}) with (\ref{1.4}), we conclude that
\[
\partial_{t}\psi(\bar{t},\bar{x})-\sup \limits_{F_{\mu}\in \mathcal{L}}\left \{
\int_{\mathbb{R}^{d}}\delta_{\lambda}^{\alpha}\psi(\bar{t},\bar{x})F_{\mu
}(d\lambda)\right \}  \leq0.
\]
The proof is completed.
\end{proof}

\subsection{The scaling and symmetric properties}

\label{Section_Scaling and symmetric properties}

We start with the scaling property. In view of Theorem
\ref{The construction of Levy process}, for each $0\leq t\leq1$, there exists
a sequence $\{t_{k}\}_{k=1}^{\infty}\subset \mathcal{D}_{\infty}[0,1]$
satisfying $t_{k}\downarrow t$ as $k\rightarrow \infty$, such that for $\phi \in
C_{b,Lip}(\mathbb{R}^{d})$, $\mathbb{\tilde{E}}\big[\phi(\tilde{\zeta}_{t_{k}%
})\big]\rightarrow \mathbb{\tilde{E}}\big[\phi(\tilde{\zeta}_{t})\big]$ as
$k\rightarrow \infty$. For each fixed $t_{k}\in \mathcal{D}_{\infty}[0,1]$, we
assume that $t_{k}=l_{k}\tau_{m_{k}}=l_{k}2^{-m_{k}}$ for some $m_{k}%
\in \mathbb{N}$ and $0\leq l_{k}\leq2^{m_{k}}$. From the construction of
$\tilde{\zeta}_{t_{k}}$ and Remark \ref{ramark i.i.d.}, there exists a
convergent sequence $\{ \bar{S}_{n_{i}^{\ast}}^{\tau_{m_{k}}}\}_{i=1}^{\infty
}$ such that
\[
\mathbb{\tilde{E}}\big[\phi \big(\tilde{\zeta}_{t_{k}}\big)\big]=\mathbb{\tilde
{E}}^{m_{k}}\big[\phi \big(\tilde{\zeta}_{l_{k}\tau_{m_{k}}}%
\big)\big]=\mathbb{\hat{E}}_{1}\big[\phi \big(\zeta_{\tau_{m_{k}}}^{1}%
+\cdots+\zeta_{\tau_{m_{k}}}^{l_{k}}\big)\big]=\lim_{i\rightarrow \infty
}\mathbb{\hat{E}}\big[\phi \big(\bar{S}_{l_{k}n_{i}^{\ast}}^{t_{k}}\big)\big],
\]
for $\phi \in C_{b,Lip}(\mathbb{R}^{d})$, where
\[
\bar{S}_{l_{k}n_{i}^{\ast}}^{t_{k}}=(\tau_{m_{k}}/n_{i}^{\ast})^{1/\alpha
}(S_{l_{k}n_{i}^{\ast}}-S_{(l_{k}-1)n_{i}^{\ast}})+\cdots+(\tau_{m_{k}}%
/n_{i}^{\ast})^{1/\alpha}(S_{2n_{i}^{\ast}}-S_{n_{i}^{\ast}})+(\tau_{m_{k}%
}/n_{i}^{\ast})^{1/\alpha}S_{n_{i}^{\ast}}.
\]
In addition, from\ Theorem \ref{The construction of Levy process}, we obtain
that for $\phi(\sqrt[\alpha]{t_{k}}\cdot)\in C_{b,Lip}(\mathbb{R}^{d})$,
\[
\lim_{i\rightarrow \infty}\mathbb{\hat{E}}\big[\phi \big(\bar{S}_{l_{k}%
n_{i}^{\ast}}^{t_{k}}\big)\big]=\lim_{i\rightarrow \infty}\mathbb{\hat{E}%
}\big[\phi \big(\sqrt[\alpha]{t_{k}}\bar{S}_{l_{k}n_{i}^{\ast}}%
\big)\big]=\mathbb{\tilde{E}}\big[\phi \big(\sqrt[\alpha]{t_{k}}\tilde{\zeta
}_{1}\big)\big].
\]
This implies that
\[
\mathbb{\tilde{E}}\big[\phi \big(\tilde{\zeta}_{t}\big)\big]=\mathbb{\tilde{E}%
}\big[\phi \big(\sqrt[\alpha]{t}\tilde{\zeta}_{1}\big)\big],\text{ for }\phi \in
C_{b,Lip}(\mathbb{R}^{d}).
\]

Next, we shall verify that $\tilde{\zeta}_{\cdot}$ is symmetric. For any given
$\phi \in C_{b,Lip}(\mathbb{R}^{d})$, Theorem \ref{unique viscosity theorem}
implies that $u(t,0)=\mathbb{\tilde{E}}[\phi(\tilde{\zeta}_{t})]$, where $u$
is the unique viscosity solution of the PIDE (\ref{PIDE}) with initial
condition $\phi$. Note that
\[
F_{\mu}(B)=F_{\mu}(-B)\text{, \ for }B\in \mathcal{B}(\mathbb{R}^{d}).
\]
For any $0\leq t\leq1$, define $v(t,x):=u(t,-x)$. It follows from
\[
\int_{\mathbb{R}^{d}}\delta_{\lambda}^{\alpha}u(t,-x)F_{\mu}(d\lambda
)=\int_{\mathbb{R}^{d}}\delta_{\lambda}^{\alpha}v(t,x)F_{\mu}(d\lambda),
\]
that $v$ is the unique viscosity solution of the PIDE (\ref{PIDE}) with
initial condition $\psi(x):=\phi(-x)$. From Theorem
\ref{unique viscosity theorem}, we derive that $v(t,0)=\mathbb{\tilde{E}}%
[\psi(\tilde{\zeta}_{t})]$. Therefore, for any $\phi \in C_{b,Lip}%
(\mathbb{R}^{d})$,
\[
\mathbb{\tilde{E}}[\phi(\tilde{\zeta}_{t})]=u(t,0)=v(t,0)=\mathbb{\tilde{E}%
}[\psi(\tilde{\zeta}_{t})]=\mathbb{\tilde{E}}[\phi(-\tilde{\zeta}_{t})],
\]
and the proof is complete.

\appendix

\section{Appendix}

\subsection{Tightness under sublinear expectation}

The weak convergence approach plays an imperative role in establishing robust
limit theorems. We recall its main results for the reader's convenience.
Further details can be found in section 2 of \cite{HJLP2022}.

\begin{definition}
\label{def_2.7} A sublinear expectation $\mathbb{\hat{E}}$ on $(\mathbb{R}%
^{n},C_{b,Lip}(\mathbb{R}^{n}))$ is said to be tight if for each
$\varepsilon>0$, there exist an $N>0$ and $\varphi \in$ $C_{b,Lip}%
(\mathbb{R}^{n})$ with $\mathbbm{1}_{\{|x|\geq N\}}\leq \varphi$ such that
$\mathbb{\hat{E}}[\varphi]<\varepsilon$.
\end{definition}

\begin{definition}
\label{def_2.8} A family of sublinear expectations $\{ \mathbb{\hat{E}%
}_{\alpha}\}_{\alpha \in \mathcal{A}}$ on $(\mathbb{R}^{n},C_{b,Lip}%
(\mathbb{R}^{n}))$ is said to be tight if there exists a tight sublinear
expectation $\mathbb{\hat{E}}$ on $(\mathbb{R}^{n},C_{b,Lip}(\mathbb{R}^{n}))$
such that
\[
\mathbb{\hat{E}}_{\alpha}[\varphi]-\mathbb{\hat{E}}_{\alpha}[\varphi^{\prime
}]\leq \mathbb{\hat{E}}[\varphi-\varphi^{\prime}],\text{ for each }
\varphi,\varphi^{\prime}\in C_{b,Lip}(\mathbb{R}^{n}).
\]

\end{definition}

\begin{definition}
Let $\{ \mathbb{\hat{E}}_{n}\}_{n=1}^{\infty}$ be a sequence of sublinear
expectations defined on $(\mathbb{R}^{n},C_{b,Lip}(\mathbb{R}^{n}))$. They are
said to be weakly convergent if, for each $\varphi \in C_{b,Lip}(\mathbb{R}%
^{n})$, $\{ \mathbb{\hat{E}}_{n}[\varphi]\}_{n=1}^{\infty}$ is a Cauchy
sequence. A family of sublinear expectations $\{ \mathbb{\hat{E}}_{\alpha
}\}_{\alpha \in \mathcal{A}}$ defined on $(\mathbb{R}^{n},C_{b,Lip}%
(\mathbb{R}^{n}))$ is said to be weakly compact if for each sequence $\{
\mathbb{\hat{E}}_{\alpha_{i}}\}_{i=1}^{\infty}$ there exists a weakly
convergent subsequence.
\end{definition}

The following result is a generalization of the celebrated Prokhorov's theorem
to the sublinear expectation case.

\begin{theorem}
\label{tight theorem}Let $\{ \mathbb{\hat{E}}_{\alpha}\}_{\alpha \in
\mathcal{A}}$ be a family of tight sublinear expectations on $(\mathbb{R}%
^{n},C_{b,Lip}(\mathbb{R}^{n}))$. Then $\{ \mathbb{\hat{E}}_{\alpha}%
\}_{\alpha \in \mathcal{A}}$ is weakly compact, namely, for each sequence $\{
\mathbb{\hat{E}}_{\alpha_{n}}\}_{n=1}^{\infty}$, there exists a subsequence
$\{ \mathbb{\hat{E}}_{\alpha_{n_{i}}}\}_{i=1}^{\infty}$ such that, for each
$\varphi \in C_{b,Lip}(\mathbb{R}^{n})$, $\{ \mathbb{\hat{E}}_{\alpha_{n_{i}}%
}[\varphi]\}_{i=1}^{\infty}$ is a Cauchy sequence.
\end{theorem}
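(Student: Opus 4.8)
The plan is to reproduce, in the language of sublinear expectations, the classical proof of Prokhorov's theorem: extract a subsequence that converges on a countable ``dense'' set of test functions by a diagonal argument, and then upgrade convergence to all of $C_{b,Lip}(\mathbb{R}^{n})$ using tightness together with the domination inequality built into Definition \ref{def_2.8}. Throughout, let $\mathbb{\hat{E}}$ denote the tight sublinear expectation dominating the family $\{\mathbb{\hat{E}}_{\alpha}\}_{\alpha\in\mathcal{A}}$ in the sense of Definition \ref{def_2.8}.

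\textbf{Step 1: a countable approximating family.} For each pair of integers $N,m\geq1$, the set of functions on the closed ball $\overline{B}_{N}$ that are bounded by $m$ and $m$-Lipschitz is relatively compact in $(C(\overline{B}_{N}),\|\cdot\|_{\infty})$ by the Arzel\`a--Ascoli theorem, hence separable; choosing a countable sup-norm dense subset of it, extending each member to an $m$-Lipschitz function on $\mathbb{R}^{n}$ truncated at level $m$ (which does not alter it on $\overline{B}_{N}$), and taking the union over all $N,m$, I obtain a countable family $\mathcal{D}\subset C_{b,Lip}(\mathbb{R}^{n})$ with the property that for every $\varphi\in C_{b,Lip}(\mathbb{R}^{n})$, every $N>0$ and every $\varepsilon>0$ there is $\varphi'\in\mathcal{D}$ with $\|\varphi'\|_{\infty}\leq\|\varphi\|_{\infty}+1$ and $\sup_{|x|\leq N}|\varphi(x)-\varphi'(x)|\leq\varepsilon$.

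\textbf{Steps 2 and 3: diagonal extraction and bootstrap.} For each fixed $\varphi\in\mathcal{D}$ the sequence $\{\mathbb{\hat{E}}_{\alpha_{n}}[\varphi]\}_{n=1}^{\infty}$ lies in $[-\|\varphi\|_{\infty},\|\varphi\|_{\infty}]$, so a Bolzano--Weierstrass plus Cantor diagonal argument over the countable set $\mathcal{D}$ produces a subsequence $\{\mathbb{\hat{E}}_{\alpha_{n_{i}}}\}_{i=1}^{\infty}$ along which $\lim_{i\to\infty}\mathbb{\hat{E}}_{\alpha_{n_{i}}}[\varphi]$ exists for every $\varphi\in\mathcal{D}$. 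Now fix an arbitrary $\varphi\in C_{b,Lip}(\mathbb{R}^{n})$ and $\varepsilon>0$. By Definition \ref{def_2.7} applied to $\mathbb{\hat{E}}$ there are $N>0$ and $\psi\in C_{b,Lip}(\mathbb{R}^{n})$ with $\mathbbm{1}_{\{|x|\geq N\}}\leq\psi$ and $\mathbb{\hat{E}}[\psi]<\varepsilon$; pick $\varphi'\in\mathcal{D}$ as in Step 1 for this $N$ and $\varepsilon$. Then $|\varphi-\varphi'|\leq\varepsilon$ on $\{|x|<N\}$ and $|\varphi-\varphi'|\leq(2\|\varphi\|_{\infty}+1)\psi$ on $\{|x|\geq N\}$, so $|\varphi-\varphi'|\leq\varepsilon+(2\|\varphi\|_{\infty}+1)\psi$ pointwise, whence by monotonicity, sub-additivity and positive homogeneity $\mathbb{\hat{E}}[|\varphi-\varphi'|]\leq(2\|\varphi\|_{\infty}+2)\varepsilon=:\rho(\varepsilon)$. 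The domination hypothesis then gives, for every index $\alpha$,
\[
\bigl|\mathbb{\hat{E}}_{\alpha}[\varphi]-\mathbb{\hat{E}}_{\alpha}[\varphi']\bigr|\leq\max\{\mathbb{\hat{E}}[\varphi-\varphi'],\mathbb{\hat{E}}[\varphi'-\varphi]\}\leq\mathbb{\hat{E}}[|\varphi-\varphi'|]\leq\rho(\varepsilon).
\]
Applying the triangle inequality to $\mathbb{\hat{E}}_{\alpha_{n_{i}}}[\varphi]-\mathbb{\hat{E}}_{\alpha_{n_{j}}}[\varphi]$ through the intermediate quantities $\mathbb{\hat{E}}_{\alpha_{n_{i}}}[\varphi']$ and $\mathbb{\hat{E}}_{\alpha_{n_{j}}}[\varphi']$, and using that $\{\mathbb{\hat{E}}_{\alpha_{n_{i}}}[\varphi']\}_{i}$ converges since $\varphi'\in\mathcal{D}$, I obtain $\limsup_{i,j\to\infty}|\mathbb{\hat{E}}_{\alpha_{n_{i}}}[\varphi]-\mathbb{\hat{E}}_{\alpha_{n_{j}}}[\varphi]|\leq2\rho(\varepsilon)$; as $\varepsilon>0$ is arbitrary, $\{\mathbb{\hat{E}}_{\alpha_{n_{i}}}[\varphi]\}_{i=1}^{\infty}$ is Cauchy, which is exactly the asserted weak compactness.

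The step I expect to be the crux is the bootstrap in Step 3, together with the construction in Step 1 that supports it: since $C_{b,Lip}(\mathbb{R}^{n})$ is not separable in the sup norm, one cannot hope for a countable family dense uniformly on all of $\mathbb{R}^{n}$, and the role of tightness is precisely to make ``approximation that is uniform only on a large ball'' good enough, by converting the tail control $\mathbb{\hat{E}}[\psi]<\varepsilon$ into smallness of $\mathbb{\hat{E}}[|\varphi-\varphi'|]$; the role of the domination hypothesis is then to transfer this single estimate simultaneously to every member of the family $\{\mathbb{\hat{E}}_{\alpha}\}_{\alpha\in\mathcal{A}}$.
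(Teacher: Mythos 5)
The paper itself does not prove Theorem \ref{tight theorem}: it is recalled from the literature, with the proof deferred to Section 2 of \cite{HJLP2022} (which in turn builds on Peng \cite{P2010_CLT}), so there is no in-paper argument to compare against line by line. Your proof is a correct, self-contained argument, and it follows the same standard Prokhorov-type strategy used in those references: a countable family of test functions obtained via Arzel\`a--Ascoli on closed balls, a Cantor diagonal extraction using $|\mathbb{\hat{E}}_{\alpha}[\varphi]|\leq\|\varphi\|_{\infty}$ (monotonicity plus constant preservation), and a bootstrap to all of $C_{b,Lip}(\mathbb{R}^{n})$ in which tightness of the dominating expectation $\mathbb{\hat{E}}$ controls the tails and the domination inequality of Definition \ref{def_2.8} transfers the single estimate on $\mathbb{\hat{E}}[|\varphi-\varphi'|]$ uniformly over the whole family. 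The only blemish is in Step 1: with $\mathcal{D}$ built as you describe, an approximant $\varphi'$ taken from the $(N,m)$ block is only bounded by $m\geq\max(\|\varphi\|_{\infty},L_{\varphi})$, and the claimed bound $\|\varphi'\|_{\infty}\leq\|\varphi\|_{\infty}+1$ need not hold when $L_{\varphi}$ is much larger than $\|\varphi\|_{\infty}$, because the Lipschitz extension is truncated at level $m$ rather than at level $\lceil\|\varphi\|_{\infty}\rceil$. This is harmless: $m$ depends only on $\varphi$, not on $N$ or $\varepsilon$, so Step 3 goes through verbatim with $\rho(\varepsilon)$ replaced by $(1+\|\varphi\|_{\infty}+m)\varepsilon$, which still tends to $0$ as $\varepsilon\rightarrow0$; alternatively, enlarge $\mathcal{D}$ by including truncations of each extension at every positive integer level, which makes the property you state literally true.
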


An immediate corollary of Theorem \ref{tight theorem} and Definition
\ref{converge in distribution} is the following result.

\begin{corollary}
\label{corollary tight theorem} {Let $\{X_{i}\}_{i=1}^{\infty}$ be a sequence
of $n$-dimensional random variables on a sublinear expectation space
$(\Omega,\mathcal{H},\mathbb{\hat{E})}$. If $\{ \mathbb{F}_{X_{i}}%
\}_{i=1}^{\infty}$ is tight, then there exists a subsequence $\{X_{i_{j}%
}\}_{j=1}^{\infty}\subset \{X_{i}\}_{i=1}^{\infty}$ which converges in
distribution.}
\end{corollary}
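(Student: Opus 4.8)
The plan is to obtain the corollary directly from the generalized Prokhorov theorem (Theorem \ref{tight theorem}) by unwinding the relevant definitions. First I would record that, for each $i$, the distribution functional $\mathbb{F}_{X_i}[\varphi]=\mathbb{\hat{E}}[\varphi(X_i)]$ is well defined for all $\varphi\in C_{b,Lip}(\mathbb{R}^n)$ (by the standing hypothesis on $n$-dimensional random variables, $\varphi(X_i)\in\mathcal{H}$), and that $(\mathbb{R}^n,C_{b,Lip}(\mathbb{R}^n),\mathbb{F}_{X_i})$ is a sublinear expectation space. Hence $\{\mathbb{F}_{X_i}\}_{i=1}^\infty$ is a sequence of sublinear expectations on $(\mathbb{R}^n,C_{b,Lip}(\mathbb{R}^n))$, and the hypothesis that it is tight means, by Definition \ref{def_2.8}, that there is a single tight sublinear expectation $\mathbb{\hat{E}}_0$ on $(\mathbb{R}^n,C_{b,Lip}(\mathbb{R}^n))$ with $\mathbb{F}_{X_i}[\varphi]-\mathbb{F}_{X_i}[\varphi']\le\mathbb{\hat{E}}_0[\varphi-\varphi']$ for all $i$ and all $\varphi,\varphi'\in C_{b,Lip}(\mathbb{R}^n)$; equivalently, $\{\mathbb{F}_{X_i}\}_{i=1}^\infty$ is a tight family in exactly the sense required by Theorem \ref{tight theorem}.

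Next I would apply Theorem \ref{tight theorem} to the family $\{\mathbb{F}_{X_i}\}_{i=1}^\infty$: it produces a subsequence $\{\mathbb{F}_{X_{i_j}}\}_{j=1}^\infty$ such that, for every $\varphi\in C_{b,Lip}(\mathbb{R}^n)$, the real sequence $\{\mathbb{F}_{X_{i_j}}[\varphi]\}_{j=1}^\infty$ is Cauchy. Rewriting $\mathbb{F}_{X_{i_j}}[\varphi]=\mathbb{\hat{E}}[\varphi(X_{i_j})]$, this says precisely that for each $\varphi\in C_{b,Lip}(\mathbb{R}^n)$ the sequence $\{\mathbb{\hat{E}}[\varphi(X_{i_j})]\}_{j=1}^\infty$ converges, which by Definition \ref{converge in distribution} is exactly the statement that $\{X_{i_j}\}_{j=1}^\infty$ converges in distribution. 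That closes the argument.

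I do not expect a genuine obstacle: the corollary is essentially a dictionary translation between the language of tight families of sublinear expectations on $(\mathbb{R}^n,C_{b,Lip}(\mathbb{R}^n))$ and the language of convergence in distribution of random variables, with Theorem \ref{tight theorem} carrying all the analytic content. The only point worth a moment's care is the feature peculiar to this paper's relaxed framework, namely that the $X_i$ themselves need not belong to $\mathcal{H}$; but this causes no difficulty, since every quantity entering the proof, $\mathbb{F}_{X_i}[\varphi]=\mathbb{\hat{E}}[\varphi(X_i)]$, involves only the bounded Lipschitz images $\varphi(X_i)\in\mathcal{H}$, so the identification of $(\mathbb{R}^n,C_{b,Lip}(\mathbb{R}^n),\mathbb{F}_{X_i})$ as a bona fide sublinear expectation space, and hence the applicability of Theorem \ref{tight theorem}, is unaffected.
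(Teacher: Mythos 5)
Your proposal is correct and coincides with the paper's reasoning: the paper offers no separate argument, calling the corollary an immediate consequence of Theorem \ref{tight theorem} and Definition \ref{converge in distribution}, which is exactly the dictionary translation you carry out. The one point you flag — that the $X_i$ need not lie in $\mathcal{H}$ but only their bounded Lipschitz images do — is handled as you say, so nothing further is needed.
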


\subsection{{The regularity estimate for $u_{n}\label{Appendix 2}$}}

In this appendix, we shall state the regularity properties of $u_{n}$ defined
in (\ref{2.2}). For any fixed $N>0$, define $Z_{1}^{N}:=Z_{1}\mathbf{1}%
_{\{|Z_{1}|\leq N\}}$. It is easy to check that $\mathbb{\hat{E}}[Z_{1}%
^{N}]=\mathbb{\hat{E}}[-Z_{1}^{N}]=0$. We introduce the following truncated
scheme $u_{n,N}:[0,1]\times \mathbb{R\rightarrow R}$ recursively by
\begin{equation}%
\begin{array}
[c]{l}%
u_{n,N}(t,z)=|z|^{\delta},\text{ \ if\ }t\in \lbrack0,1/n),\\
u_{n,N}(t,z)=\mathbb{\hat{E}}[u_{n,N}(t-1/n,z+n^{-1/\alpha}Z_{1}^{N})],\text{
\ if\ }t\in \lbrack1/n,1].
\end{array}
\label{3.1}%
\end{equation}

It is easy to obtain the following estimates.

\begin{lemma}
\label{truncted_moment estimate}For any given $N>0$, we have%
\[%
\begin{array}
[c]{ll}%
\displaystyle \mathbb{\hat{E}}[|Z_{1}^{N}|^{2}]=N^{2-\alpha}I_{1,N},\text{ \ }
& \displaystyle \mathbb{\hat{E}}[|Z_{1}-Z_{1}^{N}|^{\delta}]=N^{\delta-\alpha
}I_{2,N},
\end{array}
\]
where
\begin{align*}
I_{1,N}  &  :=2\sup_{k\in K}\left \{  \frac{k}{2-\alpha}+2\int_{0}^{1}%
\frac{\beta_{k}(zN)}{z^{\alpha-1}}dz-\beta_{k}(N)\right \}  ,\\
I_{2,N}  &  :=2\sup_{k\in K}\left \{  \frac{k}{\alpha-\delta}+\delta \int
_{1}^{\infty}\frac{\beta_{k}(zN)}{z^{1+\alpha-\delta}}dz+\beta_{k}(N)\right \}
.
\end{align*}

\end{lemma}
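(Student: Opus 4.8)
The plan is to evaluate both quantities directly from the concrete description of $\mathbb{\hat{E}}$ given in Section~\ref{Example}, namely $\mathbb{\hat{E}}[g(Z_{1})]=\sup_{k\in K}\int_{\mathbb{R}}g(x)\,dF_{W_{k}}(x)$ for a bounded or $\mathbb{\hat{E}}$-integrable functional $g$ of $Z_{1}$ (obtained from identical distribution with $Z$ and, for unbounded $g$, the same truncation argument used in Section~\ref{Example} to define $\mathbb{\hat{E}}[|Z|^{\delta}]$). First I would record that each $W_{k}$ is symmetric, since for $x>0$ one has $P(W_{k}>x)=[k/\alpha+\beta_{k}(x)]\,x^{-\alpha}=P(W_{k}\leq-x)$; consequently $W_{k}^{N}:=W_{k}\mathbf{1}_{\{|W_{k}|\leq N\}}$ is symmetric too, which both yields $\mathbb{\hat{E}}[Z_{1}^{N}]=\mathbb{\hat{E}}[-Z_{1}^{N}]=\sup_{k\in K}E[W_{k}^{N}]=0$ and lets me replace the two-sided integrals by twice their restriction to $(0,\infty)$. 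I would also note that $|Z_{1}^{N}|^{2}$ is bounded (with no atom at $|z|=N$, as $F_{W_{k}}$ is continuous) and that $|Z_{1}-Z_{1}^{N}|^{\delta}=|Z_{1}|^{\delta}\mathbf{1}_{\{|Z_{1}|>N\}}\leq|Z_{1}|^{\delta}$, the latter being $\mathbb{\hat{E}}$-integrable by Section~\ref{Example}; hence both expectations are finite and equal respectively to $2\sup_{k\in K}\int_{0}^{N}x^{2}p_{k}(x)\,dx$ and $2\sup_{k\in K}\int_{N}^{\infty}x^{\delta}p_{k}(x)\,dx$, where $p_{k}(x)=[k+\alpha\beta_{k}(x)]\,x^{-\alpha-1}-\beta_{k}'(x)\,x^{-\alpha}$ is the density of $W_{k}$ on $(0,\infty)$ obtained by differentiating $1-F_{W_{k}}$.

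Next I would carry out one integration by parts in each integral. For the first, $\int_{0}^{N}x^{2}p_{k}(x)\,dx=\int_{0}^{N}[k+\alpha\beta_{k}(x)]\,x^{1-\alpha}\,dx-\int_{0}^{N}\beta_{k}'(x)\,x^{2-\alpha}\,dx$, and integrating the last term by parts replaces it by $-\beta_{k}(N)N^{2-\alpha}+(2-\alpha)\int_{0}^{N}\beta_{k}(x)\,x^{1-\alpha}\,dx$; the two $\beta_{k}$-integrals then combine with total coefficient $\alpha+(2-\alpha)=2$, and after the change of variables $x=zN$ one obtains $N^{2-\alpha}\big[\tfrac{k}{2-\alpha}+2\int_{0}^{1}\beta_{k}(zN)\,z^{1-\alpha}\,dz-\beta_{k}(N)\big]$, which after multiplying by $2$ and taking $\sup_{k\in K}$ is exactly $N^{2-\alpha}I_{1,N}$. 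The second integral is treated identically: $\int_{N}^{\infty}x^{\delta}p_{k}(x)\,dx=\int_{N}^{\infty}[k+\alpha\beta_{k}(x)]\,x^{\delta-\alpha-1}\,dx-\int_{N}^{\infty}\beta_{k}'(x)\,x^{\delta-\alpha}\,dx$; the integration by parts now produces the boundary term $+\beta_{k}(N)N^{\delta-\alpha}$ and leaves the $\beta_{k}$-integral with coefficient $\alpha-(\alpha-\delta)=\delta$, so after $x=zN$ one gets $N^{\delta-\alpha}\big[\tfrac{k}{\alpha-\delta}+\delta\int_{1}^{\infty}\beta_{k}(zN)\,z^{\delta-\alpha-1}\,dz+\beta_{k}(N)\big]$, i.e.\ $N^{\delta-\alpha}I_{2,N}$ after doubling and taking $\sup_{k\in K}$.

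The only point deserving care, and hence the mild obstacle, is the behaviour of the boundary terms in the integrations by parts. At $x=N$ they are finite and give precisely the stated $\beta_{k}(N)$ contributions; as $x\to\infty$ the term $\beta_{k}(x)\,x^{\delta-\alpha}$ tends to $0$ because $\delta<\alpha$ and $\beta_{k}$ is bounded; and as $x\to0$ the term $\beta_{k}(x)\,x^{2-\alpha}$ tends to $0$ since $2-\alpha>0$ and $\beta_{k}$ is continuous, hence bounded near the origin. Likewise the integrals $\int_{0}^{1}\beta_{k}(zN)\,z^{1-\alpha}\,dz$ and $\int_{1}^{\infty}\beta_{k}(zN)\,z^{\delta-\alpha-1}\,dz$ converge under the standing hypotheses on $\beta_{k}$ from Section~\ref{Example} together with $\delta<\alpha\leq1$. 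I would spell out these elementary estimates and justify differentiating $F_{W_{k}}$ and interchanging the relevant sums, limits and integrals (via Fubini and dominated convergence) before performing the two computations; the remaining algebra is routine, which is why the lemma is stated as an easy consequence.
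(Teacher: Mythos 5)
Your computation is correct and is exactly the argument the paper intends: the lemma is stated without proof ("easy to obtain"), and your route — symmetry of $W_{k}$, differentiating the cdf on $(0,\infty)$, one integration by parts producing the $\mp\beta_{k}(N)$ boundary terms and the coefficients $2$ and $\delta$, then the substitution $x=zN$ and taking $\sup_{k\in K}$ — reproduces $N^{2-\alpha}I_{1,N}$ and $N^{\delta-\alpha}I_{2,N}$, matching the analogous inline computation of $\mathbb{\hat{E}}[|Z^{\delta,m}-Z^{\delta,n}|]$ in Section 3.3. The convergence/boundary checks you flag (vanishing of $\beta_{k}(x)x^{2-\alpha}$ at $0$ and $\beta_{k}(x)x^{\delta-\alpha}$ at $\infty$, integrability near $0$ and $\infty$ using $\delta<\alpha\leq1$ and the standing bounds on $\beta_{k}$) are the right ones and suffice.
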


\begin{lemma}
\label{u_N regularity}Given $N>0$. Then for any positive integer $k\leq n$ and
$x\in \mathbb{R}$,
\[
\left \vert u_{n,N}(k/n,x)-u_{n,N}(0,x)\right \vert \leq(I_{1,N})^{\frac{\delta
}{2}}N^{\frac{(2-\alpha)\delta}{2}}n^{\frac{(\alpha-2)\delta}{2\alpha}%
}(k/n)^{\frac{\delta}{2}},
\]

where $I_{1,N}$ is given in Lemma \ref{truncted_moment estimate}.
\end{lemma}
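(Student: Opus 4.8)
The plan is to unfold the truncated scheme (\ref{3.1}) into a sum of i.i.d.\ truncated variables, estimate that sum in $L^{2}$ (where the zero-mean property of $Z_{1}^{N}$ produces a gain of order $\sqrt{k}$ from the vanishing of cross terms), and only at the very end pass from the exponent $2$ back down to the exponent $\delta$ via H\"older's inequality. The crucial point is that the exponent $k^{\delta/2}$ in the claimed bound \emph{cannot} be obtained by iterating the scheme together with the sub-additive inequality for $|\cdot|^{\delta}$ alone (that only gives a linear factor $k$); one genuinely needs the $L^{2}$ cancellation.

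First I would unfold the recursion. Since $\{Z_{i}\}_{i=1}^{\infty}$ is i.i.d., so is the truncated sequence $\{Z_{i}^{N}\}_{i=1}^{\infty}$, and using the independence relation $Z_{i+1}^{N}\perp\!\!\!\perp(Z_{1}^{N},\dots,Z_{i}^{N})$ together with the tower property of $\mathbb{\hat{E}}$ one checks by induction on $k$, exactly as in the classical Peng-type iteration, that
\[
u_{n,N}(k/n,x)=\mathbb{\hat{E}}\big[\,|x+n^{-1/\alpha}S_{k}^{N}|^{\delta}\,\big],\qquad S_{k}^{N}:=\sum_{i=1}^{k}Z_{i}^{N},
\]
for every positive integer $k\le n$ and $x\in\mathbb{R}$. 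Since $u_{n,N}(0,x)=|x|^{\delta}$ and $0<\delta<\alpha\le1$, the elementary inequality $\big||a+b|^{\delta}-|a|^{\delta}\big|\le|b|^{\delta}$ combined with monotonicity and sub-additivity of $\mathbb{\hat{E}}$ gives
\[
\big|u_{n,N}(k/n,x)-u_{n,N}(0,x)\big|\le n^{-\delta/\alpha}\,\mathbb{\hat{E}}\big[|S_{k}^{N}|^{\delta}\big].
\]

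Next I would estimate $\mathbb{\hat{E}}[|S_{k}^{N}|^{\delta}]$. H\"older's inequality for sublinear expectations with conjugate exponents $2/\delta$ and $2/(2-\delta)$ yields $\mathbb{\hat{E}}[|S_{k}^{N}|^{\delta}]\le(\mathbb{\hat{E}}[|S_{k}^{N}|^{2}])^{\delta/2}$. For the last quantity, writing $S_{k}^{N}=S_{k-1}^{N}+Z_{k}^{N}$ and using the independence of $Z_{k}^{N}$ from $S_{k-1}^{N}$, one has $\mathbb{\hat{E}}[|S_{k}^{N}|^{2}]=\mathbb{\hat{E}}\big[\big(\mathbb{\hat{E}}[(y+Z_{k}^{N})^{2}]\big)_{y=S_{k-1}^{N}}\big]$; because $\mathbb{\hat{E}}[Z_{k}^{N}]=\mathbb{\hat{E}}[-Z_{k}^{N}]=0$ the cross term $\mathbb{\hat{E}}[2yZ_{k}^{N}]$ vanishes for $y\ge0$ and for $y<0$ alike, so $\mathbb{\hat{E}}[(y+Z_{k}^{N})^{2}]\le y^{2}+\mathbb{\hat{E}}[(Z_{k}^{N})^{2}]$; iterating and using constant preservation gives $\mathbb{\hat{E}}[|S_{k}^{N}|^{2}]\le k\,\mathbb{\hat{E}}[|Z_{1}^{N}|^{2}]$. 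Inserting $\mathbb{\hat{E}}[|Z_{1}^{N}|^{2}]=N^{2-\alpha}I_{1,N}$ from Lemma \ref{truncted_moment estimate} yields
\[
\big|u_{n,N}(k/n,x)-u_{n,N}(0,x)\big|\le n^{-\delta/\alpha}\big(kN^{2-\alpha}I_{1,N}\big)^{\delta/2}=(I_{1,N})^{\delta/2}N^{(2-\alpha)\delta/2}\,k^{\delta/2}\,n^{-\delta/\alpha},
\]
and since $\tfrac{(\alpha-2)\delta}{2\alpha}-\tfrac{\delta}{2}=-\tfrac{\delta}{\alpha}$, the right-hand side equals $(I_{1,N})^{\delta/2}N^{(2-\alpha)\delta/2}\,n^{(\alpha-2)\delta/(2\alpha)}(k/n)^{\delta/2}$, which is the asserted bound.

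The main obstacle is the $L^{2}$-step $\mathbb{\hat{E}}[|S_{k}^{N}|^{2}]\le k\,\mathbb{\hat{E}}[|Z_{1}^{N}|^{2}]$: this is where the truncation is used in an essential way, since the untruncated $Z_{1}$ is not even integrable, and one must check that the cancellation of the cross term is valid irrespective of the sign of $y$, which relies precisely on the symmetric zero-mean property $\mathbb{\hat{E}}[Z_{1}^{N}]=\mathbb{\hat{E}}[-Z_{1}^{N}]=0$ established just before Lemma \ref{truncted_moment estimate}. The remaining ingredients --- the unfolding, H\"older's inequality, and the exponent arithmetic --- are routine.
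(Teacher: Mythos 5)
Your proof is correct and delivers exactly the stated bound, but it takes a genuinely different route from the paper's. The paper never unfolds the scheme: it first shows by induction on the recursion (\ref{3.1}) that $|u_{n,N}(k/n,x)-u_{n,N}(k/n,y)|\le|x-y|^{\delta}$, then uses Young's inequality to dominate $|x-y|^{\delta}$ by $A|x-y|^{2}+B$ with $A=\tfrac{\delta}{2}\varepsilon$ and $B=(1-\tfrac{\delta}{2})\varepsilon^{-\delta/(2-\delta)}$, proves by a second induction the comparison $u_{n,N}(k/n,x)\le u_{n,N}(0,y)+A|x-y|^{2}+AM_{N}^{2}kn^{-2/\alpha}+B$ (using $\mathbb{\hat{E}}[Z_{1}^{N}]=\mathbb{\hat{E}}[-Z_{1}^{N}]=0$ exactly as you do, so that the quadratic functional grows only linearly in $k$), and finally sets $y=x$ and optimizes over $\varepsilon$ --- which is the same interpolation between the exponents $\delta$ and $2$ that you perform via H\"older's inequality. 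So the essential mechanism, zero-mean $L^{2}$ cancellation giving the factor $k$ followed by interpolation giving $k^{\delta/2}$, is identical; what differs is that you first identify $u_{n,N}(k/n,x)=\mathbb{\hat{E}}\big[|x+n^{-1/\alpha}S_{k}^{N}|^{\delta}\big]$ and estimate the i.i.d.\ sum directly, whereas the paper works entirely inside the recursion, applying $\mathbb{\hat{E}}$ only to functions of the single variable $Z_{1}^{N}$ at each step. Your route is shorter and more transparent, but the unfolding step (and likewise the identity $\mathbb{\hat{E}}[|S_{k}^{N}|^{2}]=\mathbb{\hat{E}}\big[\big(\mathbb{\hat{E}}[(y+Z_{k}^{N})^{2}]\big)_{y=S_{k-1}^{N}}\big]$) applies Peng's independence identity to test functions such as $|\cdot|^{\delta}$ and quadratics composed with the truncation, which are not in $C_{b,Lip}$; this requires a brief approximation argument (harmless in the paper's concrete example, where $\mathbb{\hat{E}}$ is a supremum of classical expectations, but worth stating), and avoiding it is precisely what the paper's recursion-only induction buys. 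Your remark that sub-additivity of $|\cdot|^{\delta}$ alone would only give a linear factor $k$ is accurate and explains why both proofs must route through the second moment of the truncated variables.
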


\begin{proof}
Applying induction to (\ref{3.1}), one can get that for any positive integer
$k\leq n$ and $x,y\in \mathbb{R}$
\[
\left \vert u_{n,N}(k/n,x)-u_{n,N}(k/n,y)\right \vert \leq|x-y|^{\delta}.
\]
From Young's inequality we know that for any $a,b>0$, $ab\leq \frac{\delta}%
{2}a^{\frac{2}{\delta}}+(1-\frac{\delta}{2})b^{\frac{2}{2-\delta}}$. For any
$\varepsilon>0$, let $x=|x-y|^{\delta}$ and $y=\frac{1}{\varepsilon}$, then it
follows from (i) that
\[
u_{n,N}(k/n,x)\leq u_{n,N}(k/n,y)+A|x-y|^{2}+B,
\]
where $A=\frac{\delta}{2}\varepsilon$ and $B=(1-\frac{\delta}{2}%
)\varepsilon^{\frac{-\delta}{2-\delta}}$.

We claim that, for any positive integer $k\leq n$ and $x,y\in \mathbb{R}$, it
holds that
\begin{equation}
u_{n,N}(k/n,x)\leq u_{n,N}(0,y)+A|x-y|^{2}+AM_{N}^{2}kn^{-\frac{2}{\alpha}}+B,
\label{3.3}%
\end{equation}
where $M_{N}^{2}:=\mathbb{\hat{E}}[|Z_{1}^{N}|^{2}]$. Indeed, (\ref{3.3})
obviously holds for $k=0$. Assume that for some integer $1\leq k<n$ the
assertion (\ref{3.3}) holds. Notice that
\begin{align}
u_{n,N}((k+1)/n,x)  &  =\mathbb{\hat{E}}[u_{n,N}(k/n,x+n^{-\frac{1}{\alpha}%
}Z_{1}^{N})]\label{3.4}\\
&  \leq u_{n,N}(0,y)+A\mathbb{\hat{E}}[|x-y+n^{-\frac{1}{\alpha}}Z_{1}%
^{N}|^{2}]+AM_{N}^{2}kn^{-\frac{2}{\alpha}}+B.\nonumber
\end{align}
Seeing that, $\mathbb{\hat{E}}[Z_{1}^{N}]=\mathbb{\hat{E}}[-Z_{1}^{N}]=0$, we
have
\begin{equation}
\mathbb{\hat{E}}[|x-y+n^{-\frac{1}{\alpha}}Z_{1}^{N}|^{2}]=|x-y|^{2}+M_{N}%
^{2}n^{-\frac{2}{\alpha}}. \label{3.5}%
\end{equation}
Combining (\ref{3.4})-(\ref{3.5}), we obtain that
\[
u_{n,N}((k+1)/n,x)\leq u_{n,N}(0,y)+A|x-y|^{2}+AM_{N}^{2}(k+1)n^{-\frac
{2}{\alpha}}+B,
\]
which shows that (\ref{3.3}) also holds for $k+1$. By the principle of
induction our claim is true for all positive integer $k\leq n$\ and
$x,y\in \mathbb{R}$. By taking $y=x$ in (\ref{3.3}), we have for any
$\varepsilon>0$,%
\[
u_{n,N}(k/n,x)\leq u_{n,N}(0,x)+\frac{\delta}{2}M_{N}^{2}kn^{-\frac{2}{\alpha
}}\varepsilon+(1-\frac{\delta}{2})\varepsilon^{\frac{-\delta}{2-\delta}}.
\]
By minimizing of the right-hand side with respect to $\varepsilon$, we obtain
that
\[
u_{n,N}(k/n,x)\leq u_{n,N}(0,x)+(M_{N}^{2})^{\frac{\delta}{2}}n^{\frac
{\delta(\alpha-2)}{2\alpha}}(k/n)^{\frac{\delta}{2}}.
\]
Similarly, we can also obtain the other side. From Lemma
\ref{truncted_moment estimate}, we obtain our desired result.
\end{proof}

Analogous to Lemma 4.5 in \cite{HJL2021}, we have the following error estimate
of $u_{n,N}$ for $u_{n}$.

\begin{lemma}
\label{num-truncated}Given $N>0$. Then for any positive integer $k\leq n$ and
$x\in \mathbb{R}$,
\[
\left \vert u_{n}(k/n,x)-u_{n,N}(k/n,x)\right \vert \leq I_{2,N}N^{\delta
-\alpha}n^{\frac{\alpha-\delta}{\alpha}}(k/n),
\]
where $I_{2,N}$ is given in Lemma \ref{truncted_moment estimate}.
\end{lemma}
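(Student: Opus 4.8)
The plan is to prove the estimate by induction on $k$, exploiting the common recursive structure of $u_{n}$ and $u_{n,N}$ in $(\ref{2.2})$ and $(\ref{3.1})$ together with sub-additivity and monotonicity of $\mathbb{\hat{E}}$; this is the analogue for $\mathbb{\hat{E}}$ of Lemma 4.5 in \cite{HJL2021}. For $k=0$ both schemes equal $|x|^{\delta}$, so the left-hand side vanishes and the bound holds trivially. The only external ingredient needed is the truncation-error moment bound $\mathbb{\hat{E}}[|Z_{1}-Z_{1}^{N}|^{\delta}]=N^{\delta-\alpha}I_{2,N}$ from Lemma \ref{truncted_moment estimate}, plus the uniform $\delta$-H\"older regularity of $u_{n}$ described next.

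Before running the induction I would record the estimate $|u_{n}(k/n,x)-u_{n}(k/n,y)|\le|x-y|^{\delta}$, valid for every $k\le n$ and $x,y\in\mathbb{R}$, exactly as is done for $u_{n,N}$ inside the proof of Lemma \ref{u_N regularity}. This is itself a short induction: the base case is $\big||x|^{\delta}-|y|^{\delta}\big|\le|x-y|^{\delta}$, which holds because $0<\delta<\alpha\le1$ makes $r\mapsto r^{\delta}$ sub-additive on $[0,\infty)$, and the inductive step uses $\big|\mathbb{\hat{E}}[X]-\mathbb{\hat{E}}[Y]\big|\le\mathbb{\hat{E}}[|X-Y|]$ (a consequence of sub-additivity and monotonicity) together with the induction hypothesis applied inside the expectation.

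For the inductive step, assume the claimed bound holds at level $k-1$ for all $x$. From the recursions and $\big|\mathbb{\hat{E}}[X]-\mathbb{\hat{E}}[Y]\big|\le\mathbb{\hat{E}}[|X-Y|]$, I would write $|u_{n}(k/n,x)-u_{n,N}(k/n,x)|\le\mathbb{\hat{E}}\big[\,\big|u_{n}((k-1)/n,x+n^{-1/\alpha}Z_{1})-u_{n,N}((k-1)/n,x+n^{-1/\alpha}Z_{1}^{N})\big|\,\big]$ and insert the intermediate term $u_{n}((k-1)/n,x+n^{-1/\alpha}Z_{1}^{N})$. By the H\"older estimate the first resulting piece is at most $\mathbb{\hat{E}}[|n^{-1/\alpha}(Z_{1}-Z_{1}^{N})|^{\delta}]=n^{-\delta/\alpha}\mathbb{\hat{E}}[|Z_{1}-Z_{1}^{N}|^{\delta}]=n^{-\delta/\alpha}N^{\delta-\alpha}I_{2,N}$, using positive homogeneity of $\mathbb{\hat{E}}$ and Lemma \ref{truncted_moment estimate}; the second piece is, by the induction hypothesis evaluated at the (random) point $x+n^{-1/\alpha}Z_{1}^{N}$, bounded by the deterministic constant $I_{2,N}N^{\delta-\alpha}n^{(\alpha-\delta)/\alpha}\tfrac{k-1}{n}$, which therefore survives $\mathbb{\hat{E}}$.

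Adding the two contributions and factoring out $I_{2,N}N^{\delta-\alpha}$, the induction closes by the exact arithmetic identity $n^{-\delta/\alpha}+n^{(\alpha-\delta)/\alpha}\tfrac{k-1}{n}=n^{(\alpha-\delta)/\alpha}\tfrac{1}{n}+n^{(\alpha-\delta)/\alpha}\tfrac{k-1}{n}=n^{(\alpha-\delta)/\alpha}\tfrac{k}{n}$, where I used $n^{(\alpha-\delta)/\alpha-1}=n^{-\delta/\alpha}$. There is no serious obstacle here: the argument is a clean two-term splitting plus induction, and the only points to watch are that sub-additivity of $r\mapsto r^{\delta}$ (invoked in the base case and in the H\"older step) needs $\delta\le1$, which is guaranteed by $\delta<\alpha\le1$, and that the bound from the induction hypothesis must be used as a uniform-in-$\omega$ constant so that it passes unchanged through $\mathbb{\hat{E}}$.
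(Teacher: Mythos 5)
Your proof is correct and is exactly the argument the paper has in mind when it states the lemma "analogous to Lemma 4.5 in \cite{HJL2021}": a one-step splitting through the intermediate term $u_{n}((k-1)/n,x+n^{-1/\alpha}Z_{1}^{N})$, the $\delta$-H\"older property of $u_{n}$ in $x$, the truncation moment bound $\mathbb{\hat{E}}[|Z_{1}-Z_{1}^{N}|^{\delta}]=N^{\delta-\alpha}I_{2,N}$ from Lemma \ref{truncted_moment estimate}, and induction (telescoping) over the $k$ time steps, with the arithmetic $n^{-\delta/\alpha}=n^{(\alpha-\delta)/\alpha}\tfrac{1}{n}$ closing the recursion. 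No gaps; the points you flag (sub-additivity of $r\mapsto r^{\delta}$ for $\delta\le 1$ and passing the uniform induction-hypothesis constant through $\mathbb{\hat{E}}$) are handled correctly.
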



Finally, we are in a position to obtain the regularity estimate of $u_{n}$.

\begin{theorem}
\label{u_num_regularity}For any $t,s\in \lbrack0,1]$ and $x\in \mathbb{R}$, we
have%
\[
\left \vert u_{n}(t,x)-u_{n}(s,x)\right \vert \leq I_{n}(|t-s|^{\delta
/2}+n^{-\delta/2}),
\]

where $I_{n}:=(I_{1,n^{1/\alpha}})^{\frac{\delta}{2}}+I_{2,n^{1/\alpha}}$ with
$I_{n}<\infty$ as $n\rightarrow \infty$.
\end{theorem}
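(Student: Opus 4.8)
The plan is to combine the two preceding lemmas --- the time-regularity of the truncated scheme (Lemma~\ref{u_N regularity}) and the truncation error estimate (Lemma~\ref{num-truncated}) --- at the specific truncation level $N=n^{1/\alpha}$, which is precisely the level that makes all the stray powers of $n$ in those two lemmas cancel. First I would record the elementary structural fact that the scheme $u_{n}$ defined in (\ref{2.2}) is piecewise constant in time: writing $R$ for the one-step operator $(Rf)(z):=\mathbb{\hat{E}}[f(z+n^{-1/\alpha}Z_{1})]$, an immediate induction gives $u_{n}(k/n,\cdot)=R^{k}(|\cdot|^{\delta})$ for every integer $0\leq k\leq n$, and $u_{n}(t,\cdot)=u_{n}(\lfloor nt\rfloor/n,\cdot)$ for every $t\in[0,1]$ (with the convention $\lfloor n\cdot1\rfloor=n$). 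Hence it suffices to estimate $|u_{n}(k/n,x)-u_{n}(l/n,x)|$ for grid indices $0\leq l\leq k\leq n$, and from the semigroup identity $R^{k}=R^{l}\circ R^{k-l}$ together with the basic contraction $|\mathbb{\hat{E}}[X]-\mathbb{\hat{E}}[Y]|\leq\mathbb{\hat{E}}[|X-Y|]$ iterated through $R^{l}$ (using constant preservation at each step), one obtains
\[
|u_{n}(k/n,x)-u_{n}(l/n,x)|\leq\sup_{z\in\mathbb{R}}\big|u_{n}(j/n,z)-|z|^{\delta}\big|,\qquad j:=k-l,
\]
since $u_{n}(0,\cdot)=|\cdot|^{\delta}$.

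Next, for this reduced quantity I would insert the truncated scheme $u_{n,N}$ from (\ref{3.1}) and split $|u_{n}(j/n,z)-|z|^{\delta}|$ into $|u_{n}(j/n,z)-u_{n,N}(j/n,z)|+|u_{n,N}(j/n,z)-u_{n,N}(0,z)|$. Taking $N=n^{1/\alpha}$, Lemma~\ref{num-truncated} bounds the first term by $I_{2,n^{1/\alpha}}(j/n)$, because $N^{\delta-\alpha}n^{(\alpha-\delta)/\alpha}=1$, and Lemma~\ref{u_N regularity} bounds the second by $(I_{1,n^{1/\alpha}})^{\delta/2}(j/n)^{\delta/2}$, because $N^{(2-\alpha)\delta/2}n^{(\alpha-2)\delta/(2\alpha)}=1$. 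Since $0\leq j\leq n$ we have $j/n\leq1$, hence $j/n\leq(j/n)^{\delta/2}$; and since $\lfloor nt\rfloor/n\leq t$ while $\lfloor ns\rfloor/n\geq s-1/n$ (assuming $t\geq s$ without loss of generality), we get $j/n\leq|t-s|+1/n$, so by subadditivity of $r\mapsto r^{\delta/2}$ (valid as $0<\delta/2<1/2$) $(j/n)^{\delta/2}\leq|t-s|^{\delta/2}+n^{-\delta/2}$. Collecting the pieces and using $u_{n}(t,x)=u_{n}(k/n,x)$, $u_{n}(s,x)=u_{n}(l/n,x)$ yields
\[
|u_{n}(t,x)-u_{n}(s,x)|\leq\big(I_{2,n^{1/\alpha}}+(I_{1,n^{1/\alpha}})^{\delta/2}\big)\big(|t-s|^{\delta/2}+n^{-\delta/2}\big)=I_{n}\big(|t-s|^{\delta/2}+n^{-\delta/2}\big).
\]

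Finally, I would check that $I_{n}<\infty$ as $n\to\infty$ from the explicit formulas for $I_{1,N},I_{2,N}$ in Lemma~\ref{truncted_moment estimate}: the terms $k/(2-\alpha)$ and $k/(\alpha-\delta)$ are bounded uniformly over $k\in K=(\underline{\Lambda},\overline{\Lambda})$, while the boundary values $\beta_{k}(N)$ and the integrals $\int_{1}^{\infty}\beta_{k}(zN)z^{-(1+\alpha-\delta)}dz$, $\int_{0}^{1}\beta_{k}(zN)z^{-(\alpha-1)}dz$ (for the last one using $z^{-(\alpha-1)}\leq z^{-\alpha}$ on $(0,1)$ to match the quantity $\int_{0}^{1}\beta_{k}(zN)z^{-\alpha}dz$ in condition~(ii) of the example) are all dominated by $f(n)\to0$ with $N=n^{1/\alpha}$. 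I expect the main obstacle --- and the one genuinely new point --- to be the semigroup/shift reduction from the index $k$ to $j=k-l$: a naïve splitting through the truncated scheme at the two endpoints $k/n$ and $l/n$ separately would leave an uncontrolled truncation error of size $I_{2,n^{1/\alpha}}(k/n)$ instead of $I_{2,n^{1/\alpha}}(j/n)$, which cannot be absorbed into $|t-s|^{\delta/2}$; everything else is bookkeeping, with the only delicate choice being $N=n^{1/\alpha}$, forced by requiring the $n$-exponents in Lemmas~\ref{u_N regularity} and \ref{num-truncated} to vanish simultaneously.
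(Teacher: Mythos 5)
Your proposal is correct and follows essentially the same route as the paper: reduce to grid times by the piecewise-constant-in-time structure, shift the increment $(k/n,l/n)$ to $((k-l)/n,0)$ via the one-step recursion (the paper writes this as $u_{n}(k/n,x)=\mathbb{\hat{E}}[u_{n}((k-l)/n,x+n^{-1/\alpha}\sum_{i=1}^{l}Z_{i})]$, which is your $R^{k}=R^{l}\circ R^{k-l}$ plus the sup-norm contraction), then bound the increment from time $0$ by inserting the truncated scheme with $N=n^{1/\alpha}$ and applying Lemmas \ref{u_N regularity} and \ref{num-truncated}, using $k/n\leq(k/n)^{\delta/2}$ and subadditivity of $r\mapsto r^{\delta/2}$ for the final rounding term $n^{-\delta/2}$. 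The point you flag as essential — that the truncation error must be applied to the shifted increment rather than at the two endpoints separately — is exactly the step the paper performs in its estimate (\ref{3.9}).
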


\begin{proof}
Noting that $u_{n,N}(0,x)=u_{n}(0,x)=\phi(x)$, we have for any positive
integer $k\leq n$\
\[
\left \vert u_{n}(k/n,x)-u_{n}(0,x)\right \vert \leq \left \vert u_{n}%
(k/n,x)-u_{n,N}(k/n,x)\right \vert +\left \vert u_{n,N}(k/n,x)-u_{n,N}%
(0,x)\right \vert .
\]
In view of Lemmas \ref{u_N regularity} and \ref{num-truncated}, by choosing
$N=n^{\frac{1}{\alpha}}$, we obtain
\begin{equation}%
\begin{split}
\left \vert u_{n}(k/n,x)-u_{n}(0,x)\right \vert  &  \leq(I_{1,N})^{\frac{\delta
}{2}}N^{\frac{(2-\alpha)\delta}{2}}n^{\frac{(\alpha-2)\delta}{2\alpha}%
}(k/n)^{\frac{\delta}{2}}+I_{2,N}N^{\delta-\alpha}n^{\frac{\alpha-\delta
}{\alpha}}(k/n)\\
&  \leq \big((I_{1,n^{1/\alpha}})^{\frac{\delta}{2}}+I_{2,n^{1/\alpha}%
}\big)(k/n)^{\frac{\delta}{2}}.
\end{split}
\label{3.8}%
\end{equation}
Moreover, it is easy to verify that $I_{1,n^{1/\alpha}}$ and $I_{2,n^{1/\alpha
}}$ are finite as $n\rightarrow \infty$. Without loss of generality, we assume
$k\geq l$. By using induction (\ref{2.2}) and the estimate (\ref{3.8}), we
obtain that for any $k\geq l$ and $x\in \mathbb{R}$,
\begin{equation}%
\begin{split}
&  \left \vert u_{n}(k/n,x)-u_{n}(l/n,x)\right \vert \\
&  =\big \vert \mathbb{\hat{E}}\big [u_{n}\big ((k-l)/n,x+n^{-\frac{1}{\alpha
}}\sum_{i=1}^{l}Z_{i}\big )\big ]-\mathbb{\hat{E}}\big [u_{n}%
\big (0,x+n^{-\frac{1}{\alpha}}\sum_{i=1}^{l}Z_{i}\big )\big ]\big \vert \\
&  \leq \big((I_{1,n^{1/\alpha}})^{\frac{\delta}{2}}+I_{2,n^{1/\alpha}%
}\big)((k-l)/n)^{\frac{\delta}{2}}.
\end{split}
\label{3.9}%
\end{equation}
In general, for $s,t\in \lbrack0,1]$, let $\beta_{s},\beta_{t}\in \lbrack0,1/n)$
such that $s-\beta_{s}$ and $t-\beta_{t}$ are in the grid points
$\{k/n:k\in \mathbb{N}\}$. Then, from (\ref{3.9}), we have%
\begin{align*}
u_{n}(t,x)=u_{n}(t-\beta_{t},x)  &  \leq u_{n}(s-\beta_{s}%
,x)+\big((I_{1,n^{1/\alpha}})^{\frac{\delta}{2}}+I_{2,n^{1/\alpha}%
}\big)|t-s-\beta_{t}+\beta_{s}|^{\frac{\delta}{2}}\\
&  \leq u_{n}(s,x)+\big((I_{1,n^{1/\alpha}})^{\frac{\delta}{2}}%
+I_{2,n^{1/\alpha}}\big)(|t-s|^{\frac{\delta}{2}}+n^{-\frac{\delta}{2}}).
\end{align*}
Similarly, we can prove the other side. The proof is complete.
\end{proof}



\end{document}